\documentclass[11pt]{amsart}
\usepackage{graphicx}
\usepackage{hyperref}
\usepackage{setspace}
\usepackage{enumerate}
\usepackage{amsmath,amsfonts,amssymb,amscd}

\newtheorem{thm}{Theorem}[section]
\newtheorem{cor}[thm]{Corollary}
\newtheorem{lem}[thm]{Lemma}
\newtheorem{prop}[thm]{Proposition}

\theoremstyle{definition}
\newtheorem{defn}[thm]{Definition}

\theoremstyle{remark}

\numberwithin{equation}{section}



\setlength{\oddsidemargin}{0cm} \setlength{\evensidemargin}{0cm}
\setlength{\marginparwidth}{0in}
\setlength{\marginparsep}{0in}
\setlength{\marginparpush}{0in}
\setlength{\topmargin}{0in}
\setlength{\headheight}{0pt}
\setlength{\headsep}{15pt}    
\setlength{\footskip}{.3in}
\setlength{\textheight}{9.2in}
\setlength{\textwidth}{16.5cm}
\setlength{\parskip}{4pt}

\begin{document}

\title[]{Zeroes of polynomials with prime inputs \\  and Schmidt's $h$-invariant}

\author{Stanley Yao Xiao}
\address{ Mathematical Institute\\ University of Oxford \\
Oxford, UK }
\email{Stanley.Xiao@maths.ox.ac.uk}

\author{Shuntaro Yamagishi}
\address{Department of Mathematics \& Statistics \\
Queen's University \\
Kingston, ON\\  K7L 3N6 \\
Canada}
\email{sy46@queensu.ca}
\indent

\date{Revised on \today}

\begin{abstract}
In this paper we show that a polynomial equation admits infinitely many prime-tuple solutions assuming only that the equation satisfies suitable local conditions and the polynomial is sufficiently non-degenerate algebraically.
Our notion of algebraic non-degeneracy is related to the $h$-invariant introduced by W. M. Schmidt. Our results prove a conjecture of B. Cook and \'{A}. Magyar \cite{CM} for hypersurfaces of degrees $2$ and $3$.
\end{abstract}

\subjclass[2010]
{11D45, 11D72, 11P32, 11P55}

\keywords{Hardy-Littlewood circle method, diophantine equations, prime numbers}

\maketitle

\section{Introduction}

Solving systems of integral polynomial equations in integers is among the oldest and persistently interesting problems in number theory. It is understood, especially in the context of the Hardy-Littlewood circle method, that systems tend to become easier to solve when the number of variables involved increases. For instance, it is not known whether the equation $x^2 + 1 = p$ where $x$ varies in the integers and $p$ varies among the primes has infinitely many solutions, but the corresponding 3-variable equation $x^2 + y^2 = p$ was solved by Fermat using elementary means over three centuries ago.
One can then ask whether
it is possible to interpolate between these situations. That is, given a system of polynomial equations which is solvable in the integers, one can ask whether the system remains solvable when some of the variables are restricted to a thin subset of integers.
One particular natural subset is the set of prime numbers. Indeed, many interesting problems involving prime numbers may be phrased in such a manner. For example, the existence of infinitely many solutions to the equation $x - y = 2$
with $x,y$ restricted to primes is precisely the twin prime conjecture.


In \cite{CM}, B. Cook and \'{A}. Magyar broke new ground by applying the Hardy-Littlewood circle method to show, in great generality, that systems of polynomial equations in many variables can be solved when all of the inputs are prime numbers. The key hypothesis they require is that the so-called Birch singular locus must be sufficiently small. For $\mathbf{f} = \{f_1, \ldots , f_{r_d}  \} \subseteq \mathbb{Q}[x_1, \ldots , x_n]$ a system of forms (homogeneous polynomials) of degree $d$, we define the \emph{Birch singular locus} $V_{\mathbf{f}}^*$ to be the affine variety in $\mathbb{A}_{\mathbb{C}}^n$ given by
$$
V_{\mathbf{f}}^* = \Big\{ \mathbf{x} \in \mathbb{C}^n : \text{rank } \left( \frac{ \partial f_r(\mathbf{x}) }{ \partial x_j }\right)_{ \substack{ 1 \leq r \leq r_d  \\ 1 \leq j \leq n } }  < r_d \Big\}
,
$$
and let the \textit{Birch rank} to be $\mathcal{B} (\mathbf{f}) = n - \dim V_{\mathbf{f}}^*$.
The Birch rank is an important invariant that arose in \cite{B}. In \cite{S}, W. M. Schmidt introduced a different invariant, now called Schmidt's $h$-invariant, for systems of polynomials.  B. Cook and \'{A}. Magyar conjectured in \cite[pp. 736]{CM} that their main theorem ought to hold assuming the largeness of the $h$-invariant
instead of the Birch rank (see (\ref{relnbtwbandh})).

In this paper, we give a partial solution to the conjecture of B. Cook and \'{A}. Magyar. We establish the conjecture for hypersurfaces with an additional assumption. However, our assumption is redundant for quadratic polynomials and cubic polynomials; therefore, we establish the conjecture for quadratic and cubic polynomials.
Given a form $f \in \mathbb{Q}[x_1, \ldots, x_n]$ of degree at least $2$, we define the \emph{$h$-invariant}
$h(f)$ of $f$ to be the least positive integer $h$ such that $f$ can be written identically as
\begin{equation}
\label{defn of hinv}
f = U_1 V_1 + \cdots + U_h V_h,
\end{equation}
where each $U_i$ and $V_i$ are forms in $\mathbb{Q}[x_1, \ldots, x_n]$ of degree at least $1 \ (1 \leq i \leq h)$.
We then define the following quantity
$$
h^{\star}(f) = \max ( |\{ U_i  : \deg U_i = 1 \}| ),
$$
where the maximum is over all representations of the shape (\ref{defn of hinv}).
In other words, $h^{\star}(f)$ is the maximum number of linear forms involved
in the representation of $f$ as a sum of $h = h(f)$ products of rational forms.
Clearly, we have $h^{\star}(f) \leq h(f).$
For a degree $d$ polynomial $b(\mathbf{x}) \in \mathbb{Q}[x_1, \ldots, x_n]$, we define $h(b) = h(f)$
where $f(\mathbf{x})$ is the degree $d$ portion of $b(\mathbf{x})$.
We note that any polynomial $b(\mathbf{x})$ of degree $2$ or degree $3$ satisfies
$$
h(b) = h^{\star}(b).
$$

We define the following quantity
$$
\mathcal{M}_{b}(N) = \sum_{\mathbf{x} \in [0, N]^n \cap \mathbb{Z}^n} \delta_{b }(\mathbf{x} ),
$$
where
\begin{eqnarray}
\notag
\delta_{b} (\mathbf{x} )
=
\left\{
    \begin{array}{ll}
         \prod_{1 \leq i \leq n} \log p_i
         & \ \mbox{if } x_i = p_i^{t_i}, p_i \mbox{ is prime}, t_i \in \mathbb{N} \ (1 \leq i \leq n) \
          \mbox{ and } b(\mathbf{x}) = 0 ,
         \\
         & \\
         0
         & \ \mbox{otherwise }.
    \end{array}
\right.
\notag
\end{eqnarray}

Let $\Lambda$ be the von Mangoldt function, where
$\Lambda(x)$ is $\log p$ if $x$ is a power of a prime $p$ and $0$ otherwise.
We use the notation $e(x)$ to denote $e^{2\pi i x}$. We define
\begin{equation}
\label{def of exp sum T}
T(b; \alpha) = \sum_{\mathbf{x} \in [0, N]^n \cap \mathbb{Z}^n} \Lambda(\mathbf{x}) \ e( \alpha \cdot b(\mathbf{x})),
\end{equation}
where
$$
\Lambda(\mathbf{x}) = \Lambda(x_1) \cdots \Lambda(x_n)
$$
for $\mathbf{x} = (x_1, \ldots , x_n) \in (\mathbb{Z}_{\geq 0})^n$.
By the orthogonality relation, we have
\begin{equation}
\label{defn of MbN}
\mathcal{M}_{b}(N) = \sum_{\mathbf{x} \in [0, N]^n \cap \mathbb{Z}^n} \delta_b(\mathbf{x} ) = \int_0^1 T(b; \alpha) \ d\alpha. 
\end{equation}
We obtain the following theorem by estimating the integral in ~(\ref{defn of MbN}).
\begin{thm}
\label{the main theorem}
Let $b(\mathbf{x}) \in \mathbb{Z}[x_1, \ldots, x_n]$ be a polynomial of degree $d$.
Then there exists a positive number $A_{d}$ dependent only on $d$ such that the following holds.
If $h^{\star}(b) > A_d$, then there exist $c>0$ and $C_b$ such that
$$
\mathcal{M}_{b}(N) =  
C_b N^{n-d} + O\left( \frac{N^{n-d}}{(\log N)^{c} } \right).
$$
\end{thm}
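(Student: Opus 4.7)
The plan is to apply the Hardy-Littlewood circle method to the integral representation $\mathcal{M}_b(N) = \int_0^1 T(b;\alpha)\,d\alpha$ coming from (\ref{defn of MbN}). I would partition $[0,1]$ into major arcs $\mathfrak{M}$, consisting of $\alpha$ within $N^{-d+\delta}$ of a rational $a/q$ with $q \leq (\log N)^C$ for a suitably large constant $C$, and minor arcs $\mathfrak{m} = [0,1]\setminus \mathfrak{M}$. On $\mathfrak{M}$ the standard Siegel-Walfisz approximation of $\Lambda$ on arithmetic progressions modulo $q$ lets one factorise $T(b;\alpha)$ into a local arithmetic sum, which assembles into the truncated singular series $\prod_{p} \mu(p)$, and an archimedean oscillatory integral whose completion yields $\mu(\infty) N^{n-d}$. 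Assuming that the local factors converge absolutely (which the hypothesis $h^\star(b) > A_d$ should comfortably ensure once a Weyl-type bound on $B(q)$ is in place), this part is routine and contributes the main term together with an admissible error $O(N^{n-d}/(\log N)^c)$.

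The substantive work is the minor arc estimate $\int_{\mathfrak{m}} |T(b;\alpha)|\,d\alpha = O(N^{n-d}/(\log N)^c)$. I would go via the usual route: establish a pointwise bound $\sup_{\alpha \in \mathfrak{m}} |T(b;\alpha)| \ll N^n/(\log N)^A$ for arbitrarily large $A$, then combine it with an $L^2$-type bound on $\sum_{\mathbf{x}} e(\alpha b(\mathbf{x}))$ integrated over $\mathfrak{m}$, of order $N^{n-d}(\log N)^{O(1)}$ by Birch-type considerations. To produce the required pointwise saving I would decompose $\Lambda$ via Vaughan's identity, reducing $T(b;\alpha)$ to Type I sums (one coordinate smoothly weighted, the rest free) and Type II (genuinely bilinear) sums. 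The Type I sums, after splitting the smooth variable into short intervals, collapse to exponential sums $\sum_{\mathbf{x}} e(\alpha b(\mathbf{x}))$ over boxes, to which Schmidt's Weyl-type inequality in terms of the $h$-invariant applies once $h(b) > A_d$.

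The main obstacle is the Type II sums. After a Cauchy-Schwarz step in one coordinate and $d-1$ iterated Weyl differencings in another, one must control an exponential sum attached to the $(d-1)$-fold difference of $f_b$. It is precisely here that $h^\star$, rather than $h$, becomes critical: the Cauchy-Schwarz and differencing operations effectively freeze one variable and slice $f_b$ along a linear direction, and each surviving linear factor $U_i$ in a representation $f_b = \sum_{i=1}^{h(b)} U_i V_i$ survives such slicing while the higher-degree factors degenerate. Maintaining a usable lower bound on the $h$-invariant of the sliced polynomial therefore requires that many of the $U_i$ be linear, i.e.\ that $h^\star(b)$ be large. The delicate point of the proof will be to quantify exactly how many linear forms are consumed by the Vaughan decomposition and the ensuing differencings, and to choose $A_d$ large enough that sufficiently many linear factors survive to feed the final Schmidt-type Weyl inequality and close the argument, explaining why the hypothesis is automatic for $d=2,3$ (where all factors are already linear) and why an extra assumption beyond $h(b) > A_d$ appears necessary for $d \geq 4$.
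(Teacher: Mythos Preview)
Your minor-arc plan has a genuine gap. The paper does \emph{not} proceed via Vaughan's identity on $\Lambda$. Applying Vaughan coordinate-wise to $\Lambda(\mathbf{x})=\prod_i\Lambda(x_i)$ would spawn exponentially many Type~I/II pieces, and applying it to a single coordinate still leaves the other $n-1$ coordinates weighted by $\Lambda$, so the resulting sums are not the unweighted box sums to which Schmidt's Weyl inequality applies. Your description of the Type~I case (``collapse to $\sum_{\mathbf x}e(\alpha b(\mathbf x))$ over boxes'') already fails for this reason.

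The paper's minor-arc argument (Proposition~\ref{prop minor arc bound}) decomposes the \emph{polynomial}, not the weight. The role of $h^\star$ is structural and is used exactly once, at the outset: the $M$ linear factors $(x_i+\ell_i)$ in a minimal representation of $f_b$ allow one to write $f_b=g_M+f_M$ with $h(g_M)\ge M$ and $h(f_M)=h-M$ (Lemma~\ref{h ineq2}). One then splits the variables as $\mathbf{x}=(\mathbf{w},\mathbf{y},\mathbf{z})$ with $|\mathbf{w}|\le d$, applies Cook--Magyar regularization (Proposition~\ref{prop reg par}) to the coefficient forms arising in $\mathbf{z}$ and in $(\mathbf{y},\mathbf{z})$, and uses Cauchy--Schwarz over the resulting level-set decomposition to factor $\int_{\mathfrak m}|T|$ into three pieces: a short $\mathbf{w}$-sum $S_0$ of bounded dimension (where Weyl differencing with $\Lambda$-weights genuinely works and gives the $(\log N)^{-c}$ saving), and two $L^2$-norms over $\mathbf{y}$ and $\mathbf{z}$ which reduce to solution counts for regular systems (Corollary~\ref{cor Schmidt}). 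Both counts require the degree-$d$ parts to have large $h$-invariant, and it is precisely the splitting $h(g_M)\ge M$, $h(f_M)=h-M$ --- with $M$ chosen in terms of the regularization constants --- that supplies this simultaneously.

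So your explanation of why $h^\star$ matters (linear factors ``survive slicing'' under differencing while higher-degree ones degenerate) is not how the argument runs: the linear factors are spent in one shot to manufacture the two-piece decomposition of $f_b$, and from then on only the ordinary $h$-invariant is tracked through the regularization and counting lemmas. Your major-arc sketch and the remarks on the singular series are fine and match the paper.
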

In fact, we prove that $C_b > 0$ provided the equation $b(\mathbf{x}) =0$ has a non-singular solution in $\mathbb{Z}_p^{\times}$,
the units of $p$-adic integers, for every prime $p$ and the equation $f(\mathbf{x}) = 0$, where $f(\mathbf{x})$ is the degree $d$ portion of $b(\mathbf{x})$, has a non-singular real zero in the interior of $\mathfrak{B}_0 = [0,1]^n$.

The following result is an immediate consequence of Theorem \ref{the main theorem}, which replaces the assumption of large Birch rank
in \cite[Theorem 1]{CM} with large $h$-invariant for quadratic and cubic polynomials.
\begin{cor}
\label{the main theorem 2}
Let $d = 2$ or $3$, and $b(\mathbf{x}) \in \mathbb{Z}[x_1, \ldots, x_n]$ be a polynomial of degree $d$.
Then there exists a positive number $A_{d}$ dependent only on $d$ such that the following holds.
If $h(b) > A_d$, then there exist $c>0$ and $C_b$ such that
$$
\mathcal{M}_{b}(N) = C_b N^{n-d} + O\left( \frac{N^{n-d}}{(\log N)^c } \right).
$$
\end{cor}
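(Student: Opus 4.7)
My plan is to deduce Corollary \ref{the main theorem 2} directly from Theorem \ref{the main theorem} by showing that for polynomials $b$ of degree $d \in \{2, 3\}$, one has $h(b) = h^{\star}(b)$. Since $h^{\star}(b) \leq h(b)$ always holds, it suffices to prove the reverse inequality, after which the corollary follows at once on applying Theorem \ref{the main theorem} with the same constant $A_d$ (so that the hypothesis $h(b) > A_d$ immediately translates into $h^{\star}(b) > A_d$).

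To prove $h^{\star}(b) \geq h(b)$ when $d = 2$ or $d = 3$, I would pass to the leading form $f_b$ and consider an arbitrary minimal representation $f_b = U_1 V_1 + \ldots + U_h V_h$, where $h = h(f_b)$ and each $U_i, V_i$ is a rational form with $1 \leq \deg U_i \leq \deg V_i$. The key observation is that, by minimality of $h$, one may assume $\deg U_i + \deg V_i = d$ for every $i$: indeed, each product $U_i V_i$ is a form of degree $\deg U_i + \deg V_i$, the full sum decomposes by total degree, and terms whose total degree differs from $d$ must cancel to zero since $f_b$ is homogeneous of degree $d$; deleting any such terms yields a strictly shorter representation of $f_b$, contradicting minimality. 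Under this constraint, the inequalities $1 \leq \deg U_i \leq \deg V_i$ together with $\deg U_i + \deg V_i \in \{2, 3\}$ force $\deg U_i = 1$ for every $i$, since the only partitions of $2$ and $3$ into two positive integers with the smaller one first are $2 = 1+1$ and $3 = 1+2$. Hence every minimal decomposition has all of its lower-degree factors linear, so $h^{\star}(f_b) = h(f_b)$, and therefore $h^{\star}(b) = h(b)$.

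No genuine obstacle arises within the corollary itself; the substantive analytic work is carried out in Theorem \ref{the main theorem}, which controls the exponential sum $T(b; \alpha)$ on the minor arcs via the invariant $h^{\star}(b)$. The point of the present deduction is that in low degrees the degree constraint is rigid enough to prevent any $U_i$ from being nonlinear, forcing the refined invariant $h^{\star}$ to coincide with the classical $h$-invariant, so that the $h$-invariant hypothesis conjectured by Cook and Magyar holds unconditionally for quadratic and cubic polynomials.
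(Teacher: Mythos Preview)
Your proposal is correct and matches the paper's approach exactly: the paper states (without proof) that any polynomial of degree $2$ or $3$ satisfies $h(b) = h^{\star}(b)$ and declares the corollary an immediate consequence of Theorem~\ref{the main theorem}. Your degree-counting argument supplies the routine verification of $h(b) = h^{\star}(b)$ that the paper leaves to the reader.
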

We establish Theorem \ref{the main theorem 2} in a similar manner to \cite{CM}, but we shall make use of the fact that the representation
~(\ref{defn of hinv}) has enough linear terms. We also modify the method in \cite{CM} to better suit our purposes, so that it
is in terms of the $h$-invariant instead of the Birch rank.

Despite Theorem \ref{the main theorem} and Corollary \ref{the main theorem 2} being our primary goals in this paper, it is necessary for us to work over a system of polynomials at times. Indeed, our strategy is to decompose a polynomial into a sum of elements in a suitable system of polynomials, and then use methods which apply to systems to deduce results of a single polynomial. 

The organization of the rest of the paper is as follows. In Section \ref{section h-inv}, we prove some basic
properties of the $h$-invariant. 
A sufficiently large $h^{\star}(b)$ allows us to massage our polynomial $b(\mathbf{x})$ into something amenable to the circle method,
through a process called the `regularization'. We collect results related to the regularization process in Section \ref{Sec reg lem}.
In Section \ref{technical estimate}, we obtain results from \cite{S} based on Weyl differencing in terms of polynomials instead of forms as in \cite{S}. We chose to present the details in Section \ref{technical estimate} to make certain dependency
of the constants explicit, because it plays an important role in our estimates. We then obtain the minor arc estimates in
Section \ref{section minor arc}, and the major arc estimates in Section \ref{section major arcs}.

\subsection*{Acknowledgments} We would like to thank D. Schindler and the anonymous referees for many helpful comments.
We would also like to thank the department of Pure Mathematics at University of Waterloo for their support
as portions of this work were completed while both of the authors were there as graduate students.

\section{Properties of the $h$-invariant}
\label{section h-inv}
Let $\mathbf{f} = \{f_1, \ldots, f_{r_d}  \} \subseteq \mathbb{Q}[x_1, \ldots, x_n]$ be a system of forms of degree $d > 1$. We generalize the
definition of $h$-invariant for a single form, and
define the $h$-invariant of $\mathbf{f}$ by
\begin{equation}
h(\mathbf{f}) = \min_{\boldsymbol{\mu} \in \mathbb{Q}^{r_d} \backslash \{ \boldsymbol{0} \}}  h( \mu_{1} f_1 + \cdots + \mu_{r_d} f_{r_d} ).
\end{equation}
Given an invertible linear transformation $T \in GL_n(\mathbb{Q})$, let $\mathbf{f} \circ T = \{f_1 \circ T, \ldots, f_{r_d} \circ T \}$.
It follows from the definition of the $h$-invariant that
$$
h(\mathbf{f}) = h(\mathbf{f} \circ T).
$$
Let $\mathbf{b} = ( b_1, \ldots, b_{r_d} ) \subseteq \mathbb{Q}[x_1,\ldots, x_n]$ be a system of degree $d$ polynomials.
We let $f_{r}$ to be the degree $d$ portion of $b_r \ (1 \leq r \leq r_d)$, and define
\begin{equation}
h(\mathbf{b}) = h(\{ f_{r} : 1 \leq r \leq r_d  \}).
\end{equation}
It is known that large Birch rank implies large $h$-invariant, since we have
\begin{eqnarray}
\label{relnbtwbandh}
h(\mathbf{f})\geq 2^{1-d} \mathcal{B}(\mathbf{f})
\end{eqnarray}
 by \cite[Lemma 16.1, (10.3), 
(17.1)]{S}.

We prove two basic lemmas regrading the properties of the $h$-invariant in this section.
Let $f \in \mathbb{Q}[x_1, \ldots, x_n]$ be a form of degree $d$. For $1 \leq i \leq n$, let $f |_{x_i=0} = f(x_1, \ldots, x_{i-1}, 0, x_{i+1},\ldots, x_n ) \in \mathbb{Q}[x_1, \ldots, x_n]$, which is either identically $0$ or a form of degree $d$.
Let $h(f) = 0$ if $f$ is identically $0$. We prove the following simple lemma.
\begin{lem}
\label{h ineq 1}
Let ${f} \in \mathbb{Q}[x_1, \ldots, x_n]$ be a form of degree $d>1$.
Then for any  $1 \leq i \leq n$, we have
$$
h( {f} ) -1   \leq h( {f} |_{x_i=0} ) \leq h( {f} ).
$$
\end{lem}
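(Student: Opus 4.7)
The plan is to prove the two inequalities separately; both follow from direct manipulation of the representation $f = U_1 V_1 + \cdots + U_h V_h$.

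For the upper bound $h(f|_{x_i=0}) \leq h(f)$, I would take an optimal decomposition $f = \sum_{j=1}^{h(f)} U_j V_j$ with each $U_j, V_j$ a rational form of degree at least $1$, and simply substitute $x_i = 0$. Since $U_j$ and $V_j$ are homogeneous of positive degree, the restrictions $U_j|_{x_i=0}$ and $V_j|_{x_i=0}$ are either zero or forms of the same degree. Dropping any products in which one of the factors vanishes, we obtain a representation of $f|_{x_i=0}$ as a sum of at most $h(f)$ products of forms of degree at least $1$, hence $h(f|_{x_i=0}) \leq h(f)$. In the degenerate case $f|_{x_i=0} = 0$ the inequality is trivial.

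For the lower bound $h(f) \leq h(f|_{x_i=0}) + 1$, write $g = f|_{x_i=0}$ and let $h' = h(g)$. Since $g$ does not involve $x_i$, I may choose an optimal representation $g = \sum_{j=1}^{h'} U_j' V_j'$ in which none of the forms $U_j', V_j'$ involve $x_i$ (otherwise replace them by their restrictions at $x_i=0$, which leaves $g$ unchanged and cannot increase the number of products). The polynomial $f - g$ vanishes identically on $\{x_i = 0\}$, so $x_i$ divides $f - g$ in $\mathbb{Q}[x_1,\ldots,x_n]$; because $f - g$ is a form of degree $d$ we can write $f - g = x_i \cdot W$ for a form $W$ of degree $d-1 \geq 1$, using the hypothesis $d > 1$. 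Combining gives
\begin{equation*}
f = x_i \cdot W + \sum_{j=1}^{h'} U_j' V_j',
\end{equation*}
which is a valid representation of $f$ as a sum of $h' + 1$ products of forms of positive degree, so $h(f) \leq h' + 1 = h(f|_{x_i=0}) + 1$.

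There is no real obstacle here; the only point that needs attention is the use of the assumption $d > 1$ to ensure that the extra factor $W$ in the lower-bound argument has degree at least $1$, so that $x_i \cdot W$ qualifies as one of the allowed products in the definition of the $h$-invariant. If $d = 1$ the statement would fail in the trivial case $f = x_i$, where $h(f|_{x_i=0}) = 0$ but $h(f)$ is not defined for forms of degree $1$ in the convention being used.
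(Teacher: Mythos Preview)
Your proof is correct and follows essentially the same route as the paper: for the upper bound you restrict an optimal decomposition of $f$ to $x_i=0$, and for the lower bound you write $f = x_i \cdot W + f|_{x_i=0}$ and append an optimal decomposition of $f|_{x_i=0}$. The only cosmetic difference is that the paper writes the splitting $f = x_1 g + f(0,x_2,\ldots,x_n)$ up front and explicitly allows $g=0$, whereas you phrase it via divisibility of $f-g$ by $x_i$; you might add one clause covering the trivial case $f-g=0$ (so $W=0$), but otherwise the arguments coincide.
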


\begin{proof}
Without loss of generality, we consider the case $i=1$. Let us write
\begin{equation}
\label{lem 1 eqn 1}
f(x_1, \ldots, x_n) = x_1 g(x_1, \ldots, x_n) + f(0, x_2, \ldots, x_n).
\end{equation}
Clearly, $g(\mathbf{x})$ is either identically $0$ or a form of degree $d-1$.
Let $h = h( {f} )$ and $h' = h( {f} |_{x_1=0} )$.
By the definition of $h$-invariant, we can find rational forms $U_{j'}, V_{j'} \ (1 \leq j' \leq h')$ of positive degree
that satisfy
$$
f(0, x_2, \ldots, x_n)= U_1 V_1 + \cdots + U_{h'} V_{h'}.
$$
Note if $h'=0$, we assume the right hand side to be identically $0$.
By substituting the above equation into ~(\ref{lem 1 eqn 1}), we obtain
$$
f  = x_1 g+ U_1 V_1 + \cdots + U_{h'} V_{h'}.
$$
Because $g(\mathbf{x})$ is either identically $0$ or a form of degree $d-1$, it follows that
$$
h \leq 1 + h'.
$$

For the other inequality, let $u_{j}, v_{j} \ (1 \leq j \leq h)$ be rational forms of positive degree that
satisfy
\begin{equation}
\label{f=u1v1...}
f  = u_1 v_1 + \cdots + u_{h} v_{h}.
\end{equation}
By substituting $x_1 = 0$ into each form on both sides of the equation, it is clear that
we obtain $h' \leq h$. This completes the proof of the lemma.
We add a remark that in the special case when $f$ satisfies
$$
f  = x_1 v_1 + u_2 v_2 + \cdots + u_{h} v_{h},
$$
in other words when we have $u_1 = x_1$ in ~(\ref{f=u1v1...}),
we easily obtain $h' = h-1$.
\end{proof}
The following is an immediate consequence of Lemma \ref{h ineq 1}.
\begin{lem}
\label{h ineq 1'}
Let $\mathbf{f} = \{f_1, \ldots, f_{r_d} \} \subseteq \mathbb{Q}[x_1, \ldots, x_n]$ be a system of forms of degree $d>1$.
Suppose $h(\mathbf{f})>1$. Then for any  $1 \leq i \leq n$, we have
$$
h( \mathbf{f} ) -1   \leq h( \mathbf{f} |_{x_i=0} ) \leq h( \mathbf{f} ),
$$
where $\mathbf{f} |_{x_i=0} = \{ f_1|_{x_i=0}, \ldots , f_{r_d}|_{x_i = 0} \}$.
\end{lem}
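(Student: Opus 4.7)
The plan is to reduce both inequalities directly to the single-form version in Lemma \ref{h ineq 1} by passing to a suitably chosen rational linear combination of $f_1,\dots,f_{r_d}$, mirroring the hint that the authors' proof is similar to that of Lemma \ref{h ineq 1}.

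For the upper bound $h(\mathbf{f}|_{x_i=0}) \leq h(\mathbf{f})$, I would choose $\boldsymbol{\mu}^{*} \in \mathbb{Q}^{r_d}\setminus\{\boldsymbol{0}\}$ attaining the minimum in the definition of $h(\mathbf{f})$, and set $g = \mu^{*}_1 f_1 + \cdots + \mu^{*}_{r_d} f_{r_d}$. Since $h(\mathbf{f})>1$, the form $g$ is nonzero of degree $d>1$, so Lemma \ref{h ineq 1} applies and yields $h(g|_{x_i=0}) \leq h(g) = h(\mathbf{f})$. Because $g|_{x_i=0} = \sum_r \mu^{*}_r (f_r|_{x_i=0})$, the definition of the $h$-invariant of a system then gives $h(\mathbf{f}|_{x_i=0}) \leq h(g|_{x_i=0})$, and we are done (this inequality remains valid even if $g|_{x_i=0}=0$, in which case the left side is $0$).

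For the lower bound $h(\mathbf{f})-1 \leq h(\mathbf{f}|_{x_i=0})$, I would instead choose $\boldsymbol{\nu}^{*}$ attaining the minimum for $\mathbf{f}|_{x_i=0}$ and set $g = \sum_r \nu^{*}_r f_r$. The hypothesis $h(\mathbf{f})\geq 1$ forces $g\not\equiv 0$, so $g$ is again a nonzero degree-$d$ form. Applying Lemma \ref{h ineq 1} to $g$ gives $h(g) - 1 \leq h(g|_{x_i=0}) = h(\mathbf{f}|_{x_i=0})$, and combining with $h(\mathbf{f}) \leq h(g)$ yields the desired inequality.

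The only subtlety I anticipate is justifying that the hypothesis $h(\mathbf{f})>1$ is strong enough to sidestep degenerate cases. Specifically, in the lower-bound argument one must rule out $g|_{x_i=0}\equiv 0$ being problematic: if it held, then $g = x_i g'$ with $g'$ of degree $d-1 \geq 1$, forcing $h(g)\leq 1$ and hence $h(\mathbf{f})\leq 1$, contradicting the hypothesis. This is precisely the observation the authors flag in the remark at the end of the proof of Lemma \ref{h ineq 1}, so the overall argument parallels theirs with only the minor modification of first fixing an optimal rational combination $\boldsymbol{\mu}^{*}$ or $\boldsymbol{\nu}^{*}$.
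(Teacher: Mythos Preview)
Your argument is correct and is essentially the approach the paper intends: since the authors omit the proof and simply note it is ``similar to that of Lemma~\ref{h ineq 1}'', your reduction---fixing an optimal coefficient vector $\boldsymbol{\mu}^{*}$ (respectively $\boldsymbol{\nu}^{*}$) and applying Lemma~\ref{h ineq 1} to the resulting single form---is exactly the natural implementation of that hint. One small remark: your final ``subtlety'' paragraph is unnecessary, since Lemma~\ref{h ineq 1} already covers the case $g|_{x_i=0}=0$ (with $h(0)=0$), and the remark at the end of its proof concerns tightness of the lower bound rather than a degeneracy to be ruled out; but this does not affect the validity of your argument.
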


Let $f(\mathbf{x}) \in \mathbb{Q}[x_1, \ldots, x_n]$ be a form, and let $h = h(f)$ and $0 < M  \leq h$. Suppose we have
$$
f = u_1 V_1 + \cdots + u_{M} V_{M} + U_{M+1} V_{M+1} + \cdots + U_h V_h,
$$
where each $u_i$ is a linear rational form $(1 \leq i \leq M)$, and each $U_{i'}$ and $V_j$
are rational forms of positive degree $(M+1 \leq i' \leq h, 1 \leq j \leq h)$. It can be easily
verified that the linear forms $u_1, \ldots, u_M$ are linearly independent over $\mathbb{Q}$.
Then by considering the reduced row echelon form of the matrix formed by the coefficients of $u_1, \ldots, u_M$, and relabeling the
variables if necessary, we may suppose without loss of generality that
\begin{equation}
\label{h-inv decomp after linear transfn}
f = (x_1 + \ell_1) v_1 + \cdots + (x_M + \ell_M) v_M + u_{M+1} v_{M+1} + \cdots + u_h v_h,
\end{equation}
where each $\ell_i$ is a linear form in $\mathbb{Q}[x_{M+1}, \ldots, x_n] \ (1 \leq i \leq M)$, and each $u_{i'}$ and $v_j$
are rational forms of positive degree $(M+1 \leq i' \leq h, 1 \leq j \leq h)$.
We then define $g_M \in \mathbb{Q}[x_1, \ldots, x_n]$ in the following manner,
\begin{eqnarray}
\label{def gM}
f ( x_1, x_2, \ldots, x_n ) 
= g_M (x_1, \ldots, x_n) + f(-\ell_1, \ldots,  - \ell_M, x_{M+1}, \ldots, x_n).
\end{eqnarray}
We note that there is no ambiguity for defining the polynomial
$$
f(-\ell_1, \ldots, -\ell_M, x_{M+1}, \ldots, x_n) \in \mathbb{Q}[x_{M+1}, \ldots, x_n]
$$
obtained by substitution, because each $\ell_i \in  \mathbb{Q}[x_{M+1}, \ldots, x_n] \ (1 \leq i \leq M)$.
It is also clear that
\begin{equation}
\label{gM is 0}
g(-\ell_1, \ldots,  - \ell_M, x_{M+1}, \ldots, x_n) = 0.
\end{equation}

\begin{lem}
\label{h ineq2}
Let $1 \leq M \leq h$. Suppose a degree $d$ form $f(\mathbf{x}) \in \mathbb{Q}[x_1, \ldots, x_n]$ satisfies
~(\ref{h-inv decomp after linear transfn}). Define $g_M(\mathbf{x})$ as in ~(\ref{def gM}). Then we have
$$
h(g_M) \geq M   \ \ \text{  and } \ \  h( f( -\ell_1, - \ell_2, \ldots , - \ell_M, x_{M+1}, \ldots, x_n ) ) = h-M.
$$
\end{lem}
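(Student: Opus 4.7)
The plan is to establish the identity $h(\tilde f) = h - M$ first, where I set $\tilde f := f(-\ell_1, \ldots, -\ell_M, x_{M+1}, \ldots, x_n)$, and then deduce $h(g_M) \geq M$ from the subadditivity of the $h$-invariant on forms of a common degree.

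For the upper bound $h(\tilde f) \leq h - M$, I substitute $x_i = -\ell_i$ for $1 \leq i \leq M$ directly into the representation (\ref{h-inv decomp after linear transfn}). The first $M$ products $(x_i + \ell_i) v_i$ vanish identically under this substitution, leaving an expression for $\tilde f$ as a sum of at most $h - M$ products of rational forms; each factor still has positive degree because the substitution $x_i \mapsto -\ell_i$ is homogeneous linear and so does not drop degree, and any products that happen to be zero may simply be discarded.

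For the matching lower bound, I introduce the invertible linear change of variables $\sigma : x_i \mapsto y_i - \ell_i$ for $1 \leq i \leq M$ and $x_i \mapsto y_i$ for $M+1 \leq i \leq n$. Invertibility is immediate from the fact that each $\ell_i$ lies in $\mathbb{Q}[x_{M+1}, \ldots, x_n]$ and so does not involve $x_1, \ldots, x_M$. Since the $h$-invariant is preserved under invertible linear change of variables, $h(\sigma(f)) = h$, and in the new coordinates the decomposition (\ref{h-inv decomp after linear transfn}) becomes
\[
\sigma(f) = y_1 V_1 + \cdots + y_M V_M + U_{M+1} V_{M+1} + \cdots + U_h V_h,
\]
where $V_i$ and $U_j$ denote the $\sigma$-images of $v_i$ and $u_j$, all of positive degree. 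Now $\tilde f$ is exactly the result of specializing $y_1 = \cdots = y_M = 0$ in $\sigma(f)$, so iterating Lemma \ref{h ineq 1} a total of $M$ times yields $h(\tilde f) \geq h(\sigma(f)) - M = h - M$. Combined with the upper bound, this gives $h(\tilde f) = h - M$, which is the second assertion.

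To conclude the first assertion, I observe that both $g_M$ and $\tilde f$ are forms of degree $d$ (the latter because $f$ is a form of degree $d$ and the substitution defining $\tilde f$ is homogeneous linear), so concatenating minimal representations of $g_M$ and of $\tilde f$ produces a representation of $f = g_M + \tilde f$ as a sum of $h(g_M) + h(\tilde f)$ products of positive-degree rational forms. Hence $h = h(f) \leq h(g_M) + h(\tilde f) = h(g_M) + (h - M)$, which rearranges to $h(g_M) \geq M$. The only step requiring care is the lower bound $h(\tilde f) \geq h - M$, where one must ensure that $\sigma$ is a genuine invertible linear change of variables so that Lemma \ref{h ineq 1} may be applied iteratively to $\sigma(f)$; this is immediate from the structure of the $\ell_i$.
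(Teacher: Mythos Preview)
Your proof is correct and follows essentially the same approach as the paper: both introduce the invertible linear change of variables sending $x_i$ to $x_i - \ell_i$ (your $\sigma$, the paper's $A$) so that the decomposition becomes $y_1 V_1 + \cdots + y_M V_M + \cdots$, then specialize $y_1 = \cdots = y_M = 0$ and apply Lemma~\ref{h ineq 1} iteratively, and finally use subadditivity $h(f) \leq h(g_M) + h(\tilde f)$ to deduce $h(g_M) \geq M$. The only cosmetic difference is that the paper invokes the remark at the end of Lemma~\ref{h ineq 1} to obtain $h(\tilde f) = h - M$ in one stroke, whereas you separate the upper bound (direct substitution into~(\ref{h-inv decomp after linear transfn})) from the lower bound (Lemma~\ref{h ineq 1}); these are equivalent arguments.
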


\begin{proof}
Since the linear forms $(x_1 - \ell_1), \ldots , (x_M - \ell_M)$ are linearly independent over $\mathbb{Q}$,
we can find $A \in GL_n(\mathbb{Q})$ such that
$$
\left( \begin{array}{c}
x_1 - \ell_1 \\
\vdots \\
x_M - \ell_M \\
x_{M+1} \\
\vdots \\
x_n \end{array} \right)
=
A \circ \left( \begin{array}{c}
x_1 \\
\vdots \\
x_n \end{array} \right).
$$
Let
$$
\widetilde{f} (\mathbf{x} )= f(A^{} \circ \mathbf{x} ).
$$
We then have
$$
\widetilde{f} (A^{-1} \circ \mathbf{x} )= f( \mathbf{x} ),
$$
and also that $h(\widetilde{f} ) = h(\widetilde{f} \circ A^{-1}  ) =  h(f) = h$.
Because $f(\mathbf{x})$ satisfies ~(\ref{h-inv decomp after linear transfn}),
it follows that $\widetilde{f} (\mathbf{x} )$ satisfies
$$
\widetilde{f} =  x_1 V_1 + \cdots + x_M V_M + U_{M+1} V_{M+1} + \cdots +  U_h V_h,
$$
where each $U_i$ and $V_j$ are rational forms of positive degree $(M+1 \leq i \leq h, 1 \leq j \leq h)$.

Recall each $\ell_i$ is a linear form in $\mathbb{Q}[x_{M+1}, \ldots, x_n] \ (1 \leq i \leq M)$.
Clearly, we have
\begin{eqnarray}
\widetilde{f} (0, \ldots,0, x_{M+1}, \ldots,  x_n ) &=&
f(A^{} \circ (0, \ldots,0, x_{M+1}, \ldots,  x_n ) )
\notag
\\
&=& f( -\ell_1, - \ell_2, \ldots , - \ell_M, x_{M+1}, \ldots, x_n ).
\notag
\end{eqnarray}
Then we can deduce from Lemma \ref{h ineq 1} (see the remark at the end of the proof of Lemma \ref{h ineq 1}) that
$$
h( f( -\ell_1, - \ell_2, \ldots , - \ell_M, x_{M+1}, \ldots, x_n ) )
= h(\widetilde{f} (0, \ldots,0, x_{M+1}, \ldots,  x_n )) = h-M.
$$
It then follows easily from the fact that $h(f) = h$, the definition of $h$-invariant, and ~(\ref{def gM}), that
$$
h(g_M) \geq M,
$$
for otherwise we obtain a contradiction.
\end{proof}

\section{Regularization lemmas}
\label{Sec reg lem}
In this section, we collect results from \cite{CM} and \cite{S} related to regular systems (see Definition \ref{def regular}) and the
regularization process (Proposition \ref{prop reg par}), which played an important role in \cite{CM} to obtain the minor arc estimate.
Throughout this section we use the following notation. Let $d, n >1$, and let $\textbf{f}$ be a system of forms in $\mathbb{Q}[x_1, \ldots, x_n]$ of degree less than or equal to $d$.
We denote $\textbf{f} = ({\textbf{f}}^{(d)}, \ldots, {\textbf{f}}^{(1)})$, where ${\textbf{f}}^{(i)}$
is the subsystem of all forms of degree $i$ in $\textbf{f} \  (1 \leq i \leq d)$. We label the elements of ${\textbf{f}}^{(i)}$ by
$$
{\mathbf{f}}^{(i)} = \{ f^{(i)}_1, \ldots, f^{(i)}_{r_i} \},
$$
where $r_i = | {\mathbf{f}}^{(i)} |$, the number of elements in ${\mathbf{f}}^{(i)}$.

We shall call a system of polynomials regular if it has at most the expected number of integer solutions, which we define formally below.
\begin{defn}
\label{def regular}
Let $d>1$. Let $\boldsymbol{\psi} = (\boldsymbol{\psi}^{(d)}, \ldots, \boldsymbol{\psi}^{(1)})$ be a system of polynomials in $\mathbb{Q}[x_1, \ldots, x_n]$,
where $\boldsymbol{\psi}^{(i)}$ is the subsystem of all polynomials of degree $i$ in
$\boldsymbol{\psi} \  (1 \leq i \leq d)$.
We denote $V_{\boldsymbol{\psi}, \mathbf{0}} (\mathbb{Z})$ to be the set of solutions in $\mathbb{Z}^n$ of the equations
$$
\psi^{(i)}_j (\mathbf{x}) = 0 \ (1 \leq i \leq d, 1 \leq j \leq | \boldsymbol{\psi}^{(i)} |),
$$
which we denote by
$\boldsymbol{\psi}(\mathbf{x}) = \mathbf{0}$.
Let $r_i = | \boldsymbol{\psi}^{(i)} | \ (1 \leq i \leq d)$,
and let $D_{\boldsymbol{\psi}} = \sum_{i=1}^d i r_i$. We say the system $\boldsymbol{\psi}$ is \textit{regular} if
$$
| V_{\boldsymbol{\psi}, \mathbf{0}} (\mathbb{Z}) \cap [-N,N]^n | \ll N^{n-D_{\boldsymbol{\psi} }  }.
$$
\end{defn}
Similarly as above we also define $V_{\boldsymbol{\psi}, \mathbf{0}} (\mathbb{R})$ to be the set of solutions in $\mathbb{R}^n$
of the equations $\boldsymbol{\psi}(\mathbf{x}) = \mathbf{0}$.

The following is one of the main results of \cite{S} which provides a sufficient condition for a system of polynomials to be regular.
\begin{thm}[Schmidt, \cite{S}]
\label{Schmidt main}
Let $d>1$. Let $\boldsymbol{\psi} = (\boldsymbol{\psi}^{(d)}, \ldots, \boldsymbol{\psi}^{(2)})$ be a system of rational polynomials
with notation as in Definition \ref{def regular},
and also let $\mathbf{f}^{(i)}$ be the system of degree $i$ portion of the polynomials $\boldsymbol{\psi}^{(i)} \ (2 \leq i \leq d)$.
We denote $r_i = | \boldsymbol{\psi}^{(i)} | = | \mathbf{f}^{(i)} | \ (2 \leq i \leq d)$, and $R_{\boldsymbol{\psi} }  = \sum_{i=2}^d r_i$.
If we have
$$
h(  \mathbf{f}^{(i)} )  
\geq d \ 2^{4 i}  (i!)  r_i R_{\boldsymbol{\psi} }  \ \ (2 \leq i \leq d),
$$
then the system $\boldsymbol{\psi}$ is regular.
\end{thm}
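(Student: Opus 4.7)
The plan is to follow Schmidt's approach in \cite{S}, which is a Hardy-Littlewood circle method argument driven by (descending) induction on the top degree $d$ and, within that, by induction on the number $r_d$ of forms at the top level. The $h$-invariant hypotheses on the leading-degree subsystems $\mathbf{f}^{(i)}$ are tailored precisely so that one can prove Weyl-type estimates for the associated exponential sums, and the density bound on the zero set then follows from standard major/minor arc bookkeeping.

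First I would encode the count via orthogonality: writing $R = R_{\boldsymbol{\psi}}$, one expresses $|V_{\boldsymbol{\psi},\mathbf{0}}(\mathbb{Z}) \cap [-N,N]^n|$ as an integral over a unit cube in $\mathbb{R}^{R}$ of the exponential sum $S_N(\boldsymbol{\alpha}) = \sum_{\mathbf{x} \in [-N,N]^n \cap \mathbb{Z}^n} e(\boldsymbol{\alpha} \cdot \boldsymbol{\psi}(\mathbf{x}))$. The cube is split into major arcs around rationals with small denominator and their complementary minor arcs. The crucial estimate to prove is the Weyl bound: if $h(\mathbf{f}^{(i)})$ is sufficiently large for every $2 \le i \le d$, then $|S_N(\boldsymbol{\alpha})|$ is non-trivially smaller than $N^n$ whenever $\boldsymbol{\alpha}$ lies on the minor arcs.

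To prove this Weyl bound at the top degree, one Weyl-differences the leading system $\mathbf{f}^{(d)}$ a total of $d-1$ times, producing a symmetric multilinear form in $d$ blocks of $n$ variables; a large $h$-invariant passes, via the inequality $h(\mathbf{f}) \geq 2^{1-d}\mathcal{B}(\mathbf{f})$ recorded in the introduction, into a large codimension for the singular locus of that multilinear form, and standard geometry of numbers then yields the required count on the multilinear variety. Each differencing step costs roughly a factor $2^{4}$, accounting for the $2^{4i}$ factor at level $i$; the symmetrisation of the multilinear form contributes $i!$; and because one must apply the Weyl bound to every linear combination $\boldsymbol{\mu} \cdot \mathbf{f}^{(i)}$ arising from the $h$-invariant of the system (rather than a single form), and sum the resulting errors across the $R_{\boldsymbol{\psi}}$ auxiliary variables of the circle method, one picks up the product $r_i R_{\boldsymbol{\psi}}$. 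Integrating the Weyl bound over the minor arcs yields a contribution of $O(N^{n - D_{\boldsymbol{\psi}} - \delta})$ for some $\delta > 0$. The major arcs are treated by the usual local-to-global expansion together with the inductive hypothesis applied to the residual subsystem obtained after fixing the top-degree equations, producing an overall bound of $O(N^{n - D_{\boldsymbol{\psi}}})$.

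The principal technical obstacle will be the quantitative bookkeeping of the $h$-invariant loss through the induction. At the top level one must invoke Lemmas \ref{h ineq 1} and \ref{h ineq 1'} (and their multi-form analogues) to ensure that, after peeling off one top-degree form and conditioning on it, the remaining system still has $h$-invariants exceeding the thresholds needed for the inductive call; the explicit form $d \cdot 2^{4i}(i!) r_i R_{\boldsymbol{\psi}}$ is chosen so that this remains true at every level without ever tightening the original hypothesis. Verifying that Schmidt's constants really do absorb all of the losses -- $2^{4i}$ from the $i-1$ Weyl differencings, $i!$ from symmetrisation, $r_i$ from scanning over the top-degree forms, and $R_{\boldsymbol{\psi}}$ from the ambient circle method dimension -- simultaneously at every level is the delicate part of the argument and is where the specific shape of the bound is essential.
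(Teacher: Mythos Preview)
The paper does not prove this statement at all: Theorem~\ref{Schmidt main} is quoted directly from Schmidt's paper \cite{S} (hence the attribution ``Schmidt, \cite{S}'' in the theorem header) and is used as a black box. There is therefore no ``paper's own proof'' to compare your proposal against.

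Your sketch is a reasonable summary of the kind of argument Schmidt carries out in \cite{S}, combining Weyl differencing, the passage from $h$-invariant to the codimension of the multilinear singular locus, and major/minor arc analysis. But since the present paper simply cites the result, the appropriate ``proof'' here is a one-line reference to \cite{S} rather than a reconstruction of Schmidt's method. If you want to verify the precise constants $d\,2^{4i}(i!)\,r_i R_{\boldsymbol{\psi}}$, you would need to trace through \cite[\S\S10--17]{S} directly; the authors of the present paper do not supply that bookkeeping.
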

Let us denote
\begin{equation}
\label{def rho}
\rho_{d,i}(t) =  d \ 2^{4 i}  (i!)  t^2  \ \ (2 \leq i \leq d)
\end{equation}
so that for each $2 \leq i \leq d$, we have $\rho_{d,i}(t)$ is an increasing function, and
$$
\rho_{d,i}(R_{\boldsymbol{\psi} } ) \geq d \ 2^{4 i}  (i!)  r_i R_{\boldsymbol{\psi} } .
$$

Note Theorem \ref{Schmidt main} is regarding a system of polynomials that does not contain any linear polynomials.
We prove Corollary \ref{cor Schmidt} for systems that contain linear forms as well.
Note the content of the following Corollary \ref{cor Schmidt} is essentially \cite[Corollary 3]{CM}.
\begin{cor}
\label{cor Schmidt}
Let $d>1$. Let $\boldsymbol{\psi} = (\boldsymbol{\psi}^{(d)}, \ldots, \boldsymbol{\psi}^{(1)})$ be a system of rational polynomials with notation as in Definition \ref{def regular}.
Suppose $\boldsymbol{\psi}^{(1)}$ only contains linear forms and that they are linearly independent over $\mathbb{Q}$.
We also let $\mathbf{f}^{(i)}$ be the system of degree $i$ portion of the polynomials $\boldsymbol{\psi}^{(i)} \ (1 \leq i \leq d)$.
We denote $r_i = | \boldsymbol{\psi}^{(i)} | = | \mathbf{f}^{(i)} | \ (1 \leq i \leq d)$, and $R_{\boldsymbol{\psi} }  = \sum_{i=1}^d r_i$.
For each $2 \leq i \leq d$, let $\rho_{d,i}(\cdot)$ be as in ~(\ref{def rho}).
If we have
$$
h(  \mathbf{f}^{(i)} ) \geq \rho_{d,i} (R_{\boldsymbol{\psi} }  - r_1) + r_1 \ \ (2 \leq i \leq d),
$$
then the system $\boldsymbol{\psi}$ is regular.
\end{cor}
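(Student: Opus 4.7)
The plan is to reduce the statement to Theorem \ref{Schmidt main} by eliminating the $r_1$ linear equations and applying Schmidt's theorem to the resulting linear-form-free system. Since the forms in $\boldsymbol{\psi}^{(1)}$ are linearly independent over $\mathbb{Q}$, there is an invertible linear change of variables $T \in GL_n(\mathbb{Q})$ such that, after renaming, $\boldsymbol{\psi}^{(1)}(T \mathbf{u}) = (u_1, \ldots, u_{r_1})$. The vanishing locus of the linear subsystem is then the coordinate subspace $u_1 = \cdots = u_{r_1} = 0$, and I would set
$$
\tilde{\boldsymbol{\psi}}^{(i)}(u_{r_1+1}, \ldots, u_n) := \boldsymbol{\psi}^{(i)}\!\left(T (0, \ldots, 0, u_{r_1+1}, \ldots, u_n)\right), \quad 2 \leq i \leq d,
$$
producing a reduced system $\tilde{\boldsymbol{\psi}} = (\tilde{\boldsymbol{\psi}}^{(d)}, \ldots, \tilde{\boldsymbol{\psi}}^{(2)})$ in $n - r_1$ variables containing no linear forms.

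Next I would transfer the $h$-invariant hypothesis across this substitution. Since the $h$-invariant is $GL_n(\mathbb{Q})$-invariant, $h(\mathbf{f}^{(i)} \circ T) = h(\mathbf{f}^{(i)})$; iterating Lemma \ref{h ineq 1'} $r_1$ times in the variables $u_1, \ldots, u_{r_1}$ then gives
$$
h(\tilde{\mathbf{f}}^{(i)}) \; \geq \; h(\mathbf{f}^{(i)}) - r_1 \; \geq \; \rho_{d,i}(R_{\boldsymbol{\psi}} - r_1) \; = \; \rho_{d,i}(R_{\tilde{\boldsymbol{\psi}}}).
$$
A quick check confirms this meets the hypothesis of Theorem \ref{Schmidt main} for $\tilde{\boldsymbol{\psi}}$: because $r_i \leq R_{\tilde{\boldsymbol{\psi}}}$, one has $\rho_{d,i}(R_{\tilde{\boldsymbol{\psi}}}) = d \cdot 2^{4i}(i!)\, R_{\tilde{\boldsymbol{\psi}}}^2 \geq d \cdot 2^{4i}(i!)\, r_i R_{\tilde{\boldsymbol{\psi}}}$, which is exactly what Theorem \ref{Schmidt main} requires. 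Hence $\tilde{\boldsymbol{\psi}}$ is regular.

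To finish, I would translate the regularity bound back to $\boldsymbol{\psi}$. The integer points $\mathbf{x} \in [-N,N]^n$ satisfying $\boldsymbol{\psi}^{(1)}(\mathbf{x}) = \mathbf{0}$ form a sublattice $\Lambda \subset \mathbb{Z}^n$ of rank $n-r_1$; choosing an integral basis realizes $\Lambda = M \mathbb{Z}^{n-r_1}$ for some $M \in M_{n,(n-r_1)}(\mathbb{Z})$, and the box constraint $\|\mathbf{x}\|_\infty \leq N$ forces the parameter $\mathbf{y} \in \mathbb{Z}^{n-r_1}$ to lie in a box of side-length $O(N)$. The remaining equations $\boldsymbol{\psi}^{(i)}(M \mathbf{y}) = 0$ comprise a system equivalent (up to a fixed invertible rational change of variables, which only inflates the box by a bounded factor) to $\tilde{\boldsymbol{\psi}}(\mathbf{y}) = \mathbf{0}$. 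Regularity of $\tilde{\boldsymbol{\psi}}$ thus yields
$$
| V_{\boldsymbol{\psi}, \mathbf{0}}(\mathbb{Z}) \cap [-N,N]^n | \; \ll \; N^{(n-r_1) - D_{\tilde{\boldsymbol{\psi}}}} \; = \; N^{n - D_{\boldsymbol{\psi}}},
$$
since $D_{\tilde{\boldsymbol{\psi}}} = \sum_{i=2}^d i r_i = D_{\boldsymbol{\psi}} - r_1$, which is the desired regularity of $\boldsymbol{\psi}$.

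The main technical point to handle carefully is the last step: showing that the integer points of the original system in $[-N,N]^n$ are in bijective, box-preserving correspondence (up to bounded constants) with integer points of $\tilde{\boldsymbol{\psi}}$ in a box of comparable size. The $h$-invariant bookkeeping via Lemma \ref{h ineq 1'} is routine, but the lattice parametrization and the passage from a $\mathbb{Q}$-linear change of variables to an integral one require a small amount of care to ensure the implied constants depend only on $\boldsymbol{\psi}^{(1)}$ and not on $N$.
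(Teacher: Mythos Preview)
Your proposal is correct and follows essentially the same approach as the paper: reduce to Theorem \ref{Schmidt main} by a linear change of variables that straightens the linear forms, then use Lemma \ref{h ineq 1'} to transfer the $h$-invariant hypothesis after setting $r_1$ coordinates to zero. The paper handles the integer-point step you flag more directly, by choosing $T \in GL_n(\mathbb{Q})$ with \emph{integer} entries from the outset (so that $T$ injects $\mathbb{Z}^n$ into $\mathbb{Z}^n$ and $[-N,N]^n$ into $[-C'N,C'N]^n$), thereby bypassing the separate lattice parametrization you describe.
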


\begin{proof}
We have $\boldsymbol{\psi}^{(1)} = \mathbf{f}^{(1)} = \{f_1^{(1)}, \ldots, f_{r_1}^{(1)} \}$. Let
$$
f_i^{(1)} = a_{i1}x_1 + \cdots + a_{in}x_n \ (1 \leq i \leq r_1),
$$
and denote the coefficient matrix of these linear forms to be $A = [a_{ij}]_{1\leq i \leq r_1, 1 \leq j \leq n}$.
Let $\mathbf{e}_{j}$ be the $j$-th standard basis of $\mathbb{R}^n \ (1 \leq j \leq n)$.
Since the linear forms $f_1^{(1)}, \ldots, f_{r_1}^{(1)}$ are linearly independent over $\mathbb{Q}$,
we can find an invertible linear transformation $T \in \ GL_n(\mathbb{Q})$,
where every entry of the matrix is in $\mathbb{Z}$, such that
$(f^{(1)}_i \circ T^{-1})(\mathbf{x}) = m_{n-i + 1 } x_{n-i+1}$, where $m_{n-i + 1 } \in \mathbb{Q} \backslash \{0\}  \ (1 \leq i \leq r_1)$.
For simplicity, let us denote $\mathbf{x}' = (x_{n-r_1+1}, \ldots, x_n)$.
Let
$$
Y = V_{\mathbf{f}^{(1)}, \mathbf{0} } (\mathbb{R}) = \{ \mathbf{x} \in \mathbb{R}^n : \mathbf{f}^{(1)}(\mathbf{x}) = \mathbf{0} \} = \{ \mathbf{x} \in \mathbb{R}^n : A \circ \mathbf{x} = \mathbf{0} \} = \text{Ker}(A ),
$$
which is a subspace of codimension $r_1$. Since $T(Y) = \text{Ker}(A \circ T^{-1})$, it follows from our choice of $T \in \ GL_n(\mathbb{Q})$ that
$$
T(Y) = \mathbb{R} \mathbf{e}_1 + \cdots + \mathbb{R} \mathbf{e}_{n - r_1}.
$$
We also know there exit
$c' , C' > 0$ such that
$$
[-c'N, c'N]^n \subseteq T( [-N, N]^n) \subseteq   [-C'N,C'N]^n.
$$
Define $\boldsymbol{\psi}' = (\boldsymbol{\psi'}^{(d)}, \ldots, \boldsymbol{\psi'}^{(1)}) = \boldsymbol{\psi} \circ T^{-1}$, and let $\mathbf{f'}^{(i)}$ be the system of degree $i$ portion of the polynomials $\boldsymbol{\psi'}^{(i)} \ (1 \leq i \leq d)$. We then have $\mathbf{f'}^{(i)} = \mathbf{f}^{(i)} \circ T^{-1}$.
We can also verify that $V_{\boldsymbol{\psi}', \mathbf{0} } (\mathbb{R}) = T ( V_{\boldsymbol{\psi}, \mathbf{0} } (\mathbb{R}) ).$
Therefore, we obtain
$$
T( V_{\boldsymbol{\psi}, \mathbf{0} }(\mathbb{R}) \cap [-N, N]^n )  \subseteq V_{\boldsymbol{\psi}', \mathbf{0} }(\mathbb{R}) \cap [-C' N, C' N]^n,
$$
and since every entry of the matrix $T \in \ GL_n(\mathbb{Q})$ is in $\mathbb{Z}$, it follows that
\begin{eqnarray}
| V_{\boldsymbol{\psi}, \mathbf{0} } (\mathbb{Z})\cap [-N, N]^n   |
&\leq&  |V_{\boldsymbol{\psi}', \mathbf{0} }(\mathbb{Z}) \cap [-C' N, C' N]^n  |.
\label{eqn 1 in cor 3.3}
\end{eqnarray}
Let $\boldsymbol{\psi''} = (\boldsymbol{\psi'}^{(d)} |_{{\mathbf{x}' = \mathbf{0}}}, \ldots, \boldsymbol{\psi'}^{(2)}|_{{\mathbf{x}' = \mathbf{0}}} )$.
Since $\boldsymbol{\psi' }^{(1)} = \mathbf{0}$ is equivalent to $\mathbf{x}' = \mathbf{0}$, we have
\begin{equation}
\label{eqn 1' in cor 3.3}
|V_{\boldsymbol{\psi}', \mathbf{0} }(\mathbb{Z}) \cap [-C' N, C' N]^n  |
= |V_{\boldsymbol{\psi}'', \mathbf{0} }(\mathbb{Z}) \cap [-C' N, C' N]^{n-r_1}  |.
\end{equation}

Since the degree $i$ portion of $\boldsymbol{\psi'}^{(i)} |_{\mathbf{x}' = \mathbf{0}}$ is $\mathbf{f'}^{(i)} |_{{\mathbf{x}' = \mathbf{0}}}$ for each $2 \leq i \leq d$, we have by Lemma \ref{h ineq 1'} that
\begin{eqnarray}
\notag
h( \mathbf{f'}^{(i)} |_{{\mathbf{x}' = \mathbf{0}}} ) \geq   h( \mathbf{f'}^{(i)} ) - r_1 = h( \mathbf{f}^{(i)} ) - r_1 \geq \rho_{d,i}(R_{\boldsymbol{\psi}} - r_1).
\end{eqnarray}
Thus, it follows by Theorem \ref{Schmidt main} that
\begin{equation}
\label{eqn 2 in cor 3.3}
|V_{\boldsymbol{\psi}'', \mathbf{0} } (\mathbb{Z}) \cap [-C' N, C' N]^{n-r_1}  |
\ll N^{(n-r_1) - \sum_{i=2}^d i r_i }.
\end{equation}
Therefore, we obtain from ~(\ref{eqn 1 in cor 3.3}),  ~(\ref{eqn 1' in cor 3.3}), and ~(\ref{eqn 2 in cor 3.3}) that
\begin{eqnarray}
| V_{\boldsymbol{\psi}, \mathbf{0} }(\mathbb{Z}) \cap [-N, N]^n  | 
&\ll&
N^{ n - \sum_{i=1}^d i r_i }.
\notag
\end{eqnarray}

\end{proof}


Given $\mathbf{g} = \{g_1, \ldots, g_{r_d}  \} \subseteq \mathbb{Q}[x_1, \ldots, x_n]$, a system of forms of degree $d$, and a partition of variables $\mathbf{x} = (\mathbf{y}, \mathbf{z})$, we denote $\overline{\mathbf{g}}$ to be the system obtained by removing all the forms of $\mathbf{g}$ that depend only on the $\mathbf{z}$ variables. Clearly, if we have the trivial partition $\mathbf{x} = (\mathbf{y}, \mathbf{z})$, where $\mathbf{z} = \emptyset$, then $\overline{\mathbf{g}} = \mathbf{g}$.
For a form $g(\mathbf{x})$ over $\mathbb{Q}$, we define $h(g;\mathbf{z})$ to be the smallest number $h_0$ such that $g(\mathbf{x})$
can be expressed as
$$
g(\mathbf{x}) =  g(\mathbf{y}, \mathbf{z}) = \sum_{i=1}^{h_0} u_i v_i + w_0(\mathbf{z}),
$$
where $u_i, v_i$ are rational forms of positive degree $(1 \leq i \leq h_0)$, and
$w_0(\mathbf{z})$ is a rational form only in the $\mathbf{z}$ variables.
We also define $h(\mathbf{g}; \mathbf{z})$ to be
$$
h(\mathbf{g}; \mathbf{z}) = \min_{\boldsymbol{\lambda} \in \mathbb{Q}^{r_d} \backslash \{ \boldsymbol{0} \}}  h( \lambda_{1} g_1 + \cdots + \lambda_{r_d} g_{r_d}; \mathbf{z} ).
$$
If we have the trivial partition, then clearly we have $h(\mathbf{g}; \emptyset) = h(\mathbf{g}).$
We have the following lemma.
\begin{lem}[Lemma 2, \cite{CM}]
\label{Lemma 2 in CM}
Let $\mathbf{g} = \{g_1, \ldots, g_{r_d}  \} \subseteq \mathbb{Q}[x_1, \ldots, x_n]$ be a system of forms of degree $d$, and suppose we have a partition of variables $\mathbf{x} = (\mathbf{y}, \mathbf{z})$. Let $\mathbf{y}'$ be a distinct set of variables with the same number of variables as $\mathbf{y}$.
Then we have
$$
h(\mathbf{g}(\mathbf{y}, \mathbf{z}),  \mathbf{g}(\mathbf{y}', \mathbf{z}) ; \mathbf{z} ) = h( \mathbf{g} ; \mathbf{z} ).
$$
\end{lem}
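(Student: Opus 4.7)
The plan is to prove the two inequalities $\leq$ and $\geq$ in turn, with the first being an immediate consequence of the definition and the second following from a substitution argument.

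For the inequality $h(\mathbf{g}(\mathbf{y}, \mathbf{z}), \mathbf{g}(\mathbf{y}', \mathbf{z}); \mathbf{z}) \leq h(\mathbf{g}; \mathbf{z})$, I would take $\boldsymbol{\mu} \in \mathbb{Q}^{r_d} \setminus \{\mathbf{0}\}$ attaining the right-hand minimum. The coefficient vector $(\boldsymbol{\mu}, \mathbf{0}) \in \mathbb{Q}^{2r_d} \setminus \{\mathbf{0}\}$ is admissible for the left-hand minimum and produces the combination $\sum_i \mu_i g_i(\mathbf{y}, \mathbf{z})$, which does not involve $\mathbf{y}'$, so any $h(\,\cdot\,;\mathbf{z})$-decomposition in $\mathbb{Q}[\mathbf{y}, \mathbf{z}]$ is automatically one in $\mathbb{Q}[\mathbf{y}, \mathbf{y}', \mathbf{z}]$, yielding the inequality.

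For the reverse inequality, let $(\boldsymbol{\lambda}, \boldsymbol{\lambda}') \in \mathbb{Q}^{2r_d} \setminus \{\mathbf{0}\}$ attain the left-hand minimum, set $h_0 := h(\mathbf{g}(\mathbf{y}, \mathbf{z}), \mathbf{g}(\mathbf{y}', \mathbf{z}); \mathbf{z})$, and fix a minimal representation
$$G(\mathbf{y}, \mathbf{y}', \mathbf{z}) := \sum_i \lambda_i g_i(\mathbf{y}, \mathbf{z}) + \sum_i \lambda'_i g_i(\mathbf{y}', \mathbf{z}) = \sum_{j=1}^{h_0} u_j v_j + w_0(\mathbf{z}),$$
with each $u_j, v_j \in \mathbb{Q}[\mathbf{y}, \mathbf{y}', \mathbf{z}]$ a form of positive degree. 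By symmetry in the roles of $(\mathbf{y}, \boldsymbol{\lambda})$ and $(\mathbf{y}', \boldsymbol{\lambda}')$ (and since $\mathbf{y}, \mathbf{y}'$ have the same cardinality, so $\mathbf{y}'$ can be relabelled $\mathbf{y}$ afterwards) I may assume $\boldsymbol{\lambda} \neq \mathbf{0}$. Substituting $\mathbf{y}' = \mathbf{0}$ in $G$ produces
$$\sum_i \lambda_i g_i(\mathbf{y}, \mathbf{z}) = \sum_{j=1}^{h_0} u_j(\mathbf{y}, \mathbf{0}, \mathbf{z})\, v_j(\mathbf{y}, \mathbf{0}, \mathbf{z}) + \Bigl( w_0(\mathbf{z}) - \sum_i \lambda'_i g_i(\mathbf{0}, \mathbf{z}) \Bigr).$$
Since substituting zero into a form of positive degree yields either zero or a form of the same positive degree, each surviving factor is a valid positive-degree form in $\mathbb{Q}[\mathbf{y}, \mathbf{z}]$; dropping the vanishing products leaves at most $h_0$ admissible products plus a pure-$\mathbf{z}$ remainder, witnessing $h(\sum_i \lambda_i g_i(\mathbf{y}, \mathbf{z}); \mathbf{z}) \leq h_0$, and hence $h(\mathbf{g}; \mathbf{z}) \leq h_0$.

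The only genuine subtlety is the possibility that $\boldsymbol{\lambda} = \mathbf{0}$, which is resolved by the symmetric relabelling indicated above; the rest of the argument is careful bookkeeping verifying that substitution of $\mathbf{y}'=\mathbf{0}$ respects the structural form required by the definition of $h(\,\cdot\,;\mathbf{z})$, and no further input beyond that definition is needed.
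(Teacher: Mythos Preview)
The paper does not give its own proof of this lemma; it is simply quoted from \cite{CM} as ``Lemma 2'' and used as a black box. Your argument is correct and is exactly the natural one: the $\leq$ direction is immediate from the definition by padding with zeros, and the $\geq$ direction follows by specializing $\mathbf{y}' = \mathbf{0}$ (or $\mathbf{y} = \mathbf{0}$, by symmetry) in a minimal decomposition, noting that each $u_j, v_j$ remains a form of positive degree or vanishes, and that the extra contribution $\sum_i \lambda'_i g_i(\mathbf{0}, \mathbf{z})$ is absorbed into the pure-$\mathbf{z}$ remainder $w_0$. This is the standard proof and is essentially what appears in \cite{CM}.
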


Given a system of forms which may not be regular, we want to obtain a regular system in a controlled manner. 
The process in the following proposition is referred to as the \emph{regularization} of systems in \cite{CM}, and it
is a crucial component of their method.
Given a system of rational forms $\textbf{f}$, via the regularization process we obtain
another system $\mathcal{R}( \mathbf{f})$ which is regular,
the number of forms it contains is controlled, and its level sets partition the level sets of $\textbf{f}$.
We remark that condition $(3)$ of Proposition \ref{prop reg par}, with a suitable choice of $\boldsymbol{\mathcal{F}}$,
together with Corollary \ref{cor Schmidt} implies that the resulting system is regular.

\begin{prop}[Propositions 1 and 1', \cite{CM}]
\label{prop reg par}
Let $d>1$, and let $\boldsymbol{\mathcal{F}}$ be any collection of non-decreasing functions $\mathcal{F}_i: \mathbb{Z}_{\geq 0} \rightarrow \mathbb{Z}_{\geq 0} \ (2 \leq i \leq d)$. For a collection of non-negative integers $r_1, \ldots, r_d$, there exist constants
$$
C_1(r_1, \ldots, r_d, \boldsymbol{\mathcal{F}} ), \ldots , C_d(r_1, \ldots, r_d,  \boldsymbol{\mathcal{F}} )
$$
such that the following holds.

Given a system of integral forms $\mathbf{f} = ({\mathbf{f}}^{(d)}, \ldots, {\mathbf{f}}^{(1)}) \subseteq \mathbb{Z}[x_1, \ldots, x_n]$, where each
$\mathbf{f}^{(i)}$ is a system of $r_i$ forms of degree $i \ (1 \leq i \leq d)$, and a partition of variables $\mathbf{x} = (\mathbf{y}, \mathbf{z})$, there exists a system of forms
$\mathcal{R}( \mathbf{f}) = ({\mathbf{a}}^{(d)}, \ldots, {\mathbf{a}}^{(1)})$ satisfying the following.
Let $r'_i = |\mathbf{a}^{(i)}| \ (1 \leq i \leq d)$, and $R' = r'_1 + \cdots + r'_d$.
\newline

(1) Each form of the system $\mathbf{f}$ can be written as a rational polynomial expression in the forms of the system $\mathcal{R}( \mathbf{f})$.
In particular, the level sets of $\mathcal{R}( \mathbf{f})$ partition those of $\mathbf{f}$.

(2) For each $1 \leq i \leq d$, $r'_i$ is at most $C_i(r_1, \ldots, r_d, \boldsymbol{\mathcal{F}})$.

(3) The subsystem $({\mathbf{a}}^{(d)}, \ldots, { \mathbf{a} }^{(2)})$ satisfies $h({\mathbf{a}}^{(i)}) \geq \mathcal{F}_i(R')$ for each $2 \leq i \leq d$. Moreover, the linear forms of subsystem ${ \mathbf{a} }^{(1)}$ are linearly independent over $\mathbb{Q}$.

(4) Let $\overline{\mathbf{a}}^{(i)}$ be the system obtained by removing from ${\mathbf{a}}^{(i)}$ all forms that depend only
on the $\mathbf{z}$ variables $(2 \leq i \leq d)$. Then the subsystem $({ \overline{\mathbf{a}}}^{(d)}, \ldots, { \overline{\mathbf{a}} }^{(2)})$
satisfies $h({\overline{\mathbf{a}} }^{(i)} ; \mathbf{z}) \geq \mathcal{F}_i(R')$ for each $2 \leq i \leq d$.
\end{prop}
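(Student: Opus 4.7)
The plan is to prove the proposition by an iterative regularization algorithm on $\mathbf{f}$, processing one degree at a time from the top, using two kinds of reduction steps at each degree (a plain one for condition (3) and a $\mathbf{z}$-relative one for condition (4)), and finishing with a linear-algebra cleanup at degree $1$. I write $\mathbf{a} = (\mathbf{a}^{(d)}, \ldots, \mathbf{a}^{(1)})$ for the current state of the system, initialized to $\mathbf{f}$, with $r'_i = |\mathbf{a}^{(i)}|$ and $R' = \sum_{i=1}^d r'_i$. Throughout, I maintain the invariant that every original form of $\mathbf{f}$ is a rational polynomial expression in the current forms of $\mathbf{a}$; this is enough for part (1), because each reduction step below writes a single $\mathbb{Q}$-linear combination of forms in $\mathbf{a}^{(i)}$ as a sum of products of forms of strictly lower degree, and the replaced form is recoverable as a $\mathbb{Q}$-linear combination of the new pieces together with the retained ones.

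For a reduction at fixed degree $i \geq 2$, I invoke the definition of the $h$-invariant. If $h(\mathbf{a}^{(i)}) < \mathcal{F}_i(R')$, pick a nonzero $\boldsymbol{\lambda} \in \mathbb{Q}^{r'_i}$ achieving the minimum, so that $\sum_k \lambda_k a^{(i)}_k = \sum_{j=1}^h U_j V_j$ with $h < \mathcal{F}_i(R')$ and $1 \leq \deg U_j, \deg V_j < i$. After reindexing so that $\lambda_1 \neq 0$, remove $a^{(i)}_1$ from $\mathbf{a}^{(i)}$ and adjoin the $U_j, V_j$ to the appropriate lower-degree subsystems. Analogously, if $h(\overline{\mathbf{a}}^{(i)}; \mathbf{z}) < \mathcal{F}_i(R')$, the minimizing combination equals $\sum_{j=1}^h U_j V_j + w_0(\mathbf{z})$ with $h < \mathcal{F}_i(R')$, and I replace the chosen form $a^{(i)}_1 \in \overline{\mathbf{a}}^{(i)}$ by the lower-degree $U_j, V_j$ together with the degree-$i$ $\mathbf{z}$-only form $w_0$. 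The plain reduction strictly decreases $r'_i$; the $\mathbf{z}$-relative one keeps $r'_i$ fixed (since $w_0 \in \mathbf{a}^{(i)} \setminus \overline{\mathbf{a}}^{(i)}$ replaces $a^{(i)}_1$) but strictly decreases $|\overline{\mathbf{a}}^{(i)}|$. In both cases $\mathbf{a}^{(j)}$ for $j > i$ is untouched.

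The algorithm repeatedly scans $i$ from $d$ down to $2$, applies whichever reduction is triggered by the first failing condition, and restarts the scan after each reduction. It halts precisely when both (3) and (4) hold for every $i \geq 2$ at the final value of $R'$. For degree $1$, perform Gaussian elimination on $\mathbf{a}^{(1)}$ to retain a maximal $\mathbb{Q}$-linearly independent subset; the discarded linear forms are $\mathbb{Q}$-linear combinations of those kept, so the level-set partition and invariant (1) are preserved. Termination follows from the observation that $r'_d + |\overline{\mathbf{a}}^{(d)}|$ is never increased by any reduction (no reduction adds a degree-$d$ form), and is strictly decreased by every degree-$d$ reduction, bounding the total number $T_d$ of degree-$d$ reductions by the initial value $r_d + |\overline{\mathbf{f}}^{(d)}| \leq 2 r_d$; once the degree-$d$ conditions are permanently satisfied, the analogous measure at degree $d-1$ is non-increasing, bounding $T_{d-1}$, and so on down to $i = 2$.

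The main obstacle is extracting the constants $C_i$. Since degree-$d$ reductions only remove degree-$d$ forms, the final $r'_d \leq r_d$ and $T_d \leq 2 r_d$. Each degree-$d$ reduction adds at most $2\mathcal{F}_d(R')$ forms of lower degree, so the growth of the degree-$(d-1)$ measure caused by degree-$d$ reductions is at most $4 r_d \mathcal{F}_d(R'_{\mathrm{peak}})$, and iteratively $T_i \leq 2 r_i + 4 \sum_{j > i} T_j \mathcal{F}_j(R'_{\mathrm{peak}})$. Using the monotonicity of each $\mathcal{F}_j$ and the self-referential bound $R'_{\mathrm{peak}} \leq r_1 + \ldots + r_d + 2 \sum_{i=2}^d T_i \mathcal{F}_i(R'_{\mathrm{peak}})$, this top-down induction unwinds to a finite (but potentially tower-type) expression $C_i(r_1, \ldots, r_d, \boldsymbol{\mathcal{F}})$ depending only on the initial data and $\boldsymbol{\mathcal{F}}$, establishing part (2).
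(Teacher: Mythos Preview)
The paper does not give its own proof of this proposition; it is quoted directly from Cook--Magyar \cite{CM} and used as a black box. Your sketch is essentially the Cook--Magyar regularization algorithm, and the structural parts are correct: the two reduction moves, preservation of the invariant needed for (1), the linear-algebra cleanup for $\mathbf{a}^{(1)}$, and the key monotonicity $T_d\le 2r_d$ (since no reduction ever adds a form of degree $d$ to $\overline{\mathbf{a}}^{(d)}$). A cleaner way to phrase termination than ``once the degree-$d$ conditions are permanently satisfied'' is that every reduction strictly decreases the tuple $(r'_d+|\overline{\mathbf{a}}^{(d)}|,\dots,r'_2+|\overline{\mathbf{a}}^{(2)}|)$ in the lexicographic order on $\mathbb{Z}_{\ge 0}^{d-1}$, which is a well-order.

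The genuine gap is in your last paragraph. Feeding $T_i \le 2r_i + 4\sum_{j>i}T_j\,\mathcal{F}_j(R'_{\mathrm{peak}})$ into $R'_{\mathrm{peak}} \le \sum_i r_i + 2\sum_i T_i\,\mathcal{F}_i(R'_{\mathrm{peak}})$ yields an inequality of the shape $R'_{\mathrm{peak}}\le P\bigl(\mathcal{F}_2(R'_{\mathrm{peak}}),\dots,\mathcal{F}_d(R'_{\mathrm{peak}})\bigr)$ for some polynomial $P$ with nonnegative coefficients; for arbitrary non-decreasing $\mathcal{F}_i$ this does \emph{not} force $R'_{\mathrm{peak}}$ to be finite, so the ``unwinding'' you assert is not justified. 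The remedy is to bound $R'$ incrementally rather than via a peak value: since $T_d\le 2r_d$ is absolute, there are at most $2r_d+1$ maximal stretches with no degree-$d$ reduction; within each stretch only degrees $\le d-1$ are reduced, so by induction on $d$ the growth of $R'$ in that stretch is bounded in terms of the data at its start and $\mathcal{F}_2,\dots,\mathcal{F}_{d-1}$; each intervening degree-$d$ reduction then adds fewer than $2\mathcal{F}_d$ of the current $R'$ many forms. Iterating this nested recursion $2r_d$ times gives finite (tower-type) bounds $C_i(r_1,\dots,r_d,\boldsymbol{\mathcal{F}})$, which is exactly how Cook--Magyar obtain them.
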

We will be utilizing this proposition 
in Section \ref{section minor arc} to obtain the minor arc estimate.

\section{Technical Estimates}
\label{technical estimate}

In this section, we provide results from \cite{S} related to Weyl differencing
that are necessary in obtaining estimates for the singular series in Section \ref{section singular series}.
The work here is similar to that of \cite{S}, which is in terms of forms instead of polynomials as in this section. It is stated in
\cite{S} with some explanation that similar results for polynomials also follow, but the details are not shown. We chose to present the necessary details in order to make certain dependency of the constants explicit, which are crucial in our estimates. Let us denote $\mathfrak{B}_1 = [-1,1]^n$. We shall refer to $\mathfrak{B} \subseteq \mathbb{R}^n$ as a box, if $\mathfrak{B}$ is of the form
$$
\mathfrak{B} = I_1 \times \cdots \times I_n,
$$
where each $I_j$ is a closed or open or half open/closed interval $(1 \leq j \leq n)$.
Given a function $G(\mathbf{x})$, we define
$$
\Gamma_{d, G} (\mathbf{x}_1, \ldots, \mathbf{x}_d) = \sum_{t_1=0}^1 \cdots \sum_{t_d=0}^1 (-1)^{t_1 + \cdots + t_d} \
G( t_1 \mathbf{x}_1 + \cdots + t_d \mathbf{x}_d ).
$$
Then it follows that $\Gamma_{d, G}$ is symmetric in its $d$ arguments, and that
$\Gamma_{d, G} (\mathbf{x}_1, \ldots,\mathbf{x}_{d-1}, \mathbf{0}) = 0$ \cite[Section 11]{S}.
It is clear from the definition that
$\Gamma_{d, G} + \Gamma_{d, G'} = \Gamma_{d, G + G'}.$
We also have that if $G$ is a form of degree $j$, where $d > j > 0$, then $\Gamma_{d, G}= 0$ \cite[Lemma 11.2]{S}.

For $\alpha \in \mathbb{R}$, let $\| \alpha \|$ denote the
distance from $\alpha$ to the closest integer. Given $\boldsymbol{\alpha} = (\alpha_1, \ldots, \alpha_n) \in \mathbb{R}^n$,
we let
$$
\| \boldsymbol{\alpha} \| = \max_{1 \leq i \leq n} \| \alpha_i \|.
$$

\begin{lem}\cite[Lemma 13.1]{S}
\label{Lemma 13.1 in S}
Suppose $G(\mathbf{x}) = G^{(0)} + G^{(1)}(\mathbf{x} ) + \cdots + G^{(d)}(\mathbf{x} )$, where $G^{(j)}$ is a form of degree $j$
with real coefficients $(1 \leq j \leq d)$, and $G^{(0)} \in \mathbb{R}$. Let $\mathfrak{B}$ be a box with sides $\leq 1$, let $P > 1$, and put
$$
S' = S'( G, P , \mathfrak{B} ) = \sum_{\mathbf{x} \in P \mathfrak{B} \cap \mathbb{Z}^n } e(G(\mathbf{x})).
$$
Let $\mathbf{e}_1, \ldots , \mathbf{e}_{n}$ be the standard basis vectors of $\mathbb{R}^n$.
Then for any $\varepsilon > 0$, 
we have
$$
|S'|^{2^{d-1}} \ll P^{ (2^{d-1} -d )n + \varepsilon} \sum \left(
\prod_{i=1}^n \min ( P, \|   \Gamma_{d, G^{(d)}} ( \mathbf{x}_1, \ldots , \mathbf{x}_{d-1}, \mathbf{e}_i  ) \|^{-1}   )
 \right),
$$
where the sum $\sum$ is over $(d-1)$-tuples of integer points $\mathbf{x}_1, \ldots , \mathbf{x}_{d-1}$ in $P \mathfrak{B}_1$,
and the implicit constant in $\ll$ depends only on $n,d,$ and $\varepsilon$.
\end{lem}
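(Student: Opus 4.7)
The plan is to adapt Schmidt's Weyl differencing argument \cite[Lemma 13.1]{S}, which is stated for forms, to the polynomial setting, by showing that only the top-degree part $G^{(d)}$ of $G$ survives in the relevant place after $d-1$ differencings.

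First, I would apply Weyl differencing $d-1$ times. Writing $\Delta_{\mathbf{h}} G(\mathbf{x}) = G(\mathbf{x}+\mathbf{h}) - G(\mathbf{x})$ and repeatedly using the Cauchy--Schwarz inequality in the standard way, one obtains an estimate of the shape
$$
|S'|^{2^{d-1}} \ll P^{(2^{d-1}-1)n} \sum_{\mathbf{h}_1, \ldots, \mathbf{h}_{d-1} \in P\mathfrak{B}_1 \cap \mathbb{Z}^n} \left| \sum_{\mathbf{x} \in \mathfrak{B}(\mathbf{h}_1, \ldots, \mathbf{h}_{d-1}) \cap \mathbb{Z}^n} e\bigl(\Delta_{\mathbf{h}_1} \cdots \Delta_{\mathbf{h}_{d-1}} G(\mathbf{x})\bigr)\right|,
$$
where $\mathfrak{B}(\mathbf{h}_1, \ldots, \mathbf{h}_{d-1})$ is the intersection of translates of $P\mathfrak{B}$ determined by the choices of $\mathbf{h}_j$; this box has side lengths at most $P$ and is contained in $P\mathfrak{B}$.

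Next I would use the crucial observation that underlies the passage from forms to polynomials: since $\Delta_{\mathbf{h}}$ lowers the degree of a polynomial by exactly one, the operator $\Delta_{\mathbf{h}_1} \cdots \Delta_{\mathbf{h}_{d-1}}$ annihilates every $G^{(j)}$ with $j \leq d-2$, sends $G^{(d-1)}$ to a quantity $C'(\mathbf{h}_1,\ldots,\mathbf{h}_{d-1})$ that is constant in $\mathbf{x}$, and sends $G^{(d)}$ to a polynomial that is linear in $\mathbf{x}$. Consequently
$$
\Delta_{\mathbf{h}_1} \cdots \Delta_{\mathbf{h}_{d-1}} G(\mathbf{x}) = C(\mathbf{h}_1,\ldots,\mathbf{h}_{d-1}) + \sum_{i=1}^n x_i \cdot c_i(\mathbf{h}_1, \ldots, \mathbf{h}_{d-1}),
$$
and the coefficients $c_i$ depend only on $G^{(d)}$. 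A direct computation, together with the symmetry and multilinearity of $\Gamma_{d, G^{(d)}}$ and the vanishing $\Gamma_{d,G^{(d)}}(\ldots, \mathbf{0}) = 0$, identifies $c_i$ (up to a harmless integer factor absorbed in the implicit constant) with $\Gamma_{d, G^{(d)}}(\mathbf{h}_1, \ldots, \mathbf{h}_{d-1}, \mathbf{e}_i)$. Since the constant term $C$ contributes only a factor of modulus one, the inner exponential sum factors as a product of one-dimensional geometric sums, each of which is bounded by $\min(P, \|\Gamma_{d, G^{(d)}}(\mathbf{h}_1, \ldots, \mathbf{h}_{d-1}, \mathbf{e}_i)\|^{-1})$ in the standard way.

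Finally, I would combine these estimates and trivially bound the inner box by $P\mathfrak{B}_1 \cap \mathbb{Z}^n$ to extend the summation in $\mathbf{h}_1, \ldots, \mathbf{h}_{d-1}$. The $P^{\varepsilon}$ loss arises from summing $\min(P, \|\alpha\|^{-1})$ over $h$-tuples where the linear coefficient vanishes, exactly as in Schmidt's original argument. The main obstacle is the bookkeeping in step two: verifying that the lower-degree parts of $G$ truly do not contaminate the linear coefficients in $\mathbf{x}$ (so that the bound depends only on $G^{(d)}$ via $\Gamma_{d, G^{(d)}}$), and tracking carefully that the boxes produced by iterated differencing remain contained in $P\mathfrak{B}_1$ so the shape of the right-hand side matches the statement. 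Once these points are verified, the proof follows the form-level argument of Schmidt \emph{mutatis mutandis}.
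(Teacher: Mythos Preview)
The paper does not actually prove this lemma; it is quoted directly from Schmidt \cite[Lemma 13.1]{S} and used as a black box, with the paper only supplying proofs for the later statements in Section~\ref{technical estimate} (Lemma~\ref{Lemma 15.1 in S} onward). Your outline is the standard Weyl differencing argument that Schmidt himself uses, and your key observation---that the iterated difference operator kills $G^{(0)},\dots,G^{(d-2)}$, sends $G^{(d-1)}$ to a constant in $\mathbf{x}$, and leaves only $G^{(d)}$ contributing to the linear coefficients---is exactly the point that makes the lemma hold for polynomials rather than just forms.

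One correction: the exponent you wrote after $d-1$ differencings should be $(2^{d-1}-d)n$, not $(2^{d-1}-1)n$. If you track the Cauchy--Schwarz losses carefully, writing $|S'|^{2^{k}} \ll P^{a_k n}\sum_{\mathbf{h}_1,\dots,\mathbf{h}_k}|S_k|$, the recursion is $a_{k+1}=2a_k+k$ with $a_1=0$, giving $a_k = 2^{k}-k-1$ and hence $a_{d-1}=2^{d-1}-d$. With the exponent you wrote the inequality is too weak to recover the stated bound. Also, your account of where the $P^{\varepsilon}$ comes from is not quite right: in Schmidt's argument it arises from a divisor-type bound in the passage from the raw differenced sum to the final shape, not from tuples where the linear coefficient vanishes. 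These are bookkeeping points rather than structural ones; the strategy is sound.
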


\begin{lem}\cite[Lemma 14.2]{S}
\label{Lemma 14.2 in S}
Make all the assumptions of Lemma \ref{Lemma 13.1 in S}. Suppose further that
$$
|S'| \geq P^{n-Q}
$$
where $Q>0.$
Let $0 < \eta \leq 1.$
Then the number $N(\eta)$ of integral $(d-1)$-tuples
$$
\mathbf{x}_1, \ldots, \mathbf{x}_{d-1} \in P^{\eta} \mathfrak{B}_1
$$
with
$$
\|  \Gamma_{d, G^{(d)}} ( \mathbf{x}_1, \ldots , \mathbf{x}_{d-1}, \mathbf{e}_i  ) \| < P^{-d + (d-1) \eta } \ (i=1,\ldots, n)
$$
satisfies
$$
N(\eta) \gg P^{ n(d-1)\eta - 2^{d-1} Q - \varepsilon },
$$
where the implicit constant in $\gg$ depends only on $n,d, \eta,$ and $\varepsilon$.
\end{lem}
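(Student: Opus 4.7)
The plan is to combine the upper bound from Lemma~\ref{Lemma 13.1 in S} with the lower bound hypothesis $|S'| \geq P^{n-Q}$, and then convert the resulting lower bound on a weighted lattice-point sum into the required lower bound on $N$ via dyadic pigeonholing followed by a geometry-of-numbers scaling argument. To begin, I substitute $|S'| \geq P^{n-Q}$ (raised to the $2^{d-1}$-th power) into Lemma~\ref{Lemma 13.1 in S} and rearrange to obtain
\begin{equation*}
\sum \prod_{i=1}^{n} \min\!\bigl(P,\, \|\Gamma_{d,G^{(d)}}(\mathbf{x}_1,\ldots,\mathbf{x}_{d-1},\mathbf{e}_i)\|^{-1}\bigr) \gg P^{dn - 2^{d-1}Q - \varepsilon},
\end{equation*}
where the sum runs over $(\mathbf{x}_1,\ldots,\mathbf{x}_{d-1}) \in (P\mathfrak{B}_1 \cap \mathbb{Z}^n)^{d-1}$.

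Next, I would partition the tuples dyadically. For $\mathbf{j} = (j_1,\ldots,j_n) \in \{0,1,\ldots,J\}^n$ with $J = \lceil \log_2 P \rceil$, let $N(\mathbf{j})$ count those tuples with $\|\Gamma_{d,G^{(d)}}(\mathbf{x}_1,\ldots,\mathbf{x}_{d-1},\mathbf{e}_i)\| \in [2^{-j_i-1}, 2^{-j_i})$ for every $i$, with the convention that $j_i = J$ covers $\|\cdot\| < P^{-1}$. In each bucket the integrand satisfies $\prod_{i=1}^{n} \min(P, \|\cdot\|^{-1}) \leq 2^{j_1+\cdots+j_n+n}$, so the weighted sum is at most $O\bigl(\sum_{\mathbf{j}} 2^{j_1+\cdots+j_n} N(\mathbf{j})\bigr)$. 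Since there are only $(J+1)^n = O(P^{\varepsilon})$ buckets, pigeonhole supplies some $\mathbf{j}^{*}$ with
\begin{equation*}
2^{j_1^{*}+\cdots+j_n^{*}} N(\mathbf{j}^{*}) \gg P^{dn - 2^{d-1}Q - 2\varepsilon}.
\end{equation*}

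To finish, I transfer this dyadic count at scales $2^{-j_i^{*}}$ to the count $N$ at scale $P^{-1}$ using a geometry-of-numbers shrinking lemma. Since $\Gamma_{d,G^{(d)}}(\mathbf{x}_1,\ldots,\mathbf{x}_{d-1},\mathbf{e}_i)$ is symmetric and $(d-1)$-multilinear in its first $d-1$ arguments, I would invoke the multilinear analogue of Davenport's shrinking lemma from \cite{S}, applied slot-by-slot through the $d-1$ tensor factors. This converts the bucket population $N(\mathbf{j}^{*})$ at scales $2^{-j_i^{*}}$ into on the order of $P^{-n} \, 2^{j_1^{*}+\cdots+j_n^{*}} N(\mathbf{j}^{*})$ tuples at scale $P^{-1}$, and combining with the pigeonhole bound yields $N \gg P^{n(d-1) - 2^{d-1}Q - 2\varepsilon}$. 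Relabelling $\varepsilon$ recovers the claimed bound in the case $s = n$; the weaker statement for any $s \leq n$ follows by monotonicity, since requiring the small-norm condition on only $s$ of the $n$ coordinates yields a larger count.

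The main obstacle is the final scaling step. Because $\Gamma_{d,G^{(d)}}$ is only $(d-1)$-linear in the combined variable $(\mathbf{x}_1,\ldots,\mathbf{x}_{d-1})$, a uniform dilation $\mathbf{x}_k \mapsto \mu \mathbf{x}_k$ rescales $\Gamma$ by $\mu^{d-1}$ rather than $\mu$, and the linear version of Davenport's shrinking lemma does not directly apply. One must instead perform an anisotropic shrinking $(\mu_1,\ldots,\mu_{d-1})$ distributed across the slots, invoking the lemma iteratively on one $\mathbf{x}_k$ at a time while maintaining integrality of the coordinates and containment in $P\mathfrak{B}_1$. With this step handled, the remaining portions of the argument amount to routine bookkeeping.
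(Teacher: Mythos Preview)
The paper does not give its own proof of this lemma: it is quoted verbatim as \cite[Lemma~14.2]{S} and then used as a black box in the proof of Lemma~\ref{Lemma 15.1 in S}. So there is nothing in the paper to compare your argument against.

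That said, your sketch is the standard Birch--Davenport--Schmidt route to this statement: feed the hypothesis $|S'|\ge P^{n-Q}$ into the Weyl inequality of Lemma~\ref{Lemma 13.1 in S}, pigeonhole over dyadic scales for the quantities $\|\Gamma_{d,G^{(d)}}(\mathbf{x}_1,\ldots,\mathbf{x}_{d-1},\mathbf{e}_i)\|$, and then push the surviving bucket down to scale $P^{-1}$ via Davenport's shrinking lemma applied one slot $\mathbf{x}_k$ at a time (exploiting that $\Gamma_{d,G^{(d)}}$ is linear in each $\mathbf{x}_k$ separately). This is exactly how the result is obtained in \cite{S}; you have also correctly isolated the one genuinely delicate point, namely that a single uniform dilation rescales $\Gamma$ by $\mu^{d-1}$, so the shrinking must be done iteratively across the $d-1$ slots rather than all at once.

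Two small remarks. First, the symbol $s$ in the displayed conclusion is Schmidt's notation for the number of variables, i.e.\ what this paper calls $n$; it is not an independent parameter $s\le n$, so your closing sentence about monotonicity, while harmless, is addressing a non-issue. Second, your per-coordinate dyadic pigeonhole (with an $n$-tuple $\mathbf{j}$) is slightly nonstandard; the usual treatment pigeonholes on a single dyadic scale and then applies the shrinking lemma uniformly, which avoids having to justify an anisotropic version of Davenport's lemma. Either version can be made to work, but if you want to match \cite{S} exactly, the single-parameter pigeonhole is cleaner.
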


Let $\boldsymbol{\psi} = \{ \psi_1, \ldots, \psi_{r_d}  \}$ be a system of rational polynomials of degree $d$.
Let $\mathbf{f} = \{ f_1, \ldots, f_{r_d} \}$ be the system of forms, where $f_i$ is the degree $d$ portion of
$\psi_i \ (1 \leq i \leq {r_d} )$.
We define the following exponential sum associated to  $\boldsymbol{\psi}$ and $\mathfrak{B}$,
\begin{equation}
\label{def of S 1}
S( \boldsymbol{\alpha}) = S( \boldsymbol{\psi},  \mathfrak{B} ;\boldsymbol{\alpha}) = \sum_{\mathbf{x} \in P \mathfrak{B} \cap \mathbb{Z}^n} e( \boldsymbol{\alpha}  \cdot \boldsymbol{\psi} (\mathbf{x}) ).
\end{equation}

Let $\mathbf{e}_1, \ldots, \mathbf{e}_n$ be the standard basis vectors of $\mathbb{C}^n$.
We define $\mathfrak{M}_d = \mathfrak{M}_d (\mathbf{f}^{}) $ to be the set of $(d-1)$-tuples $(\mathbf{x}_1, \ldots, \mathbf{x}_{d-1} ) \in (\mathbb{C}^n)^{d-1}$ for which the matrix
$$
[m_{ij}] = [ \Gamma_{d, f_{j}} ( \mathbf{x}_1, \ldots , \mathbf{x}_{d-1}, \mathbf{e}_i ) ] \ \ \ \  (1 \leq j \leq r_d, 1\leq i \leq n)
$$
has rank strictly less than $r_d$. For $R>0$, we denote $z_{R} (\mathfrak{M}_d)$ to be the number of integer points $(\mathbf{x}_1, \ldots, \mathbf{x}_{d-1} )$ on
$\mathfrak{M}_d$ such that $$\max_{1 \leq i \leq d-1} \max_{1 \leq j \leq n}  | x_{ij} | \leq R,$$
where $\mathbf{x}_i = (x_{i1}, \ldots, x_{in}) \ (1 \leq i \leq d-1)$.

Let $P > 1$, $Q > 0$, and $\varepsilon >0$ be given, and suppose that $d > 1$. We then have:
\begin{lem}\cite[Lemma 15.1]{S}
\label{Lemma 15.1 in S}
Given a box $\mathfrak{B}$ with sides $\leq 1$, define the sum  $S( \boldsymbol{\alpha})$ associated with $\boldsymbol{\psi}$ and $\mathfrak{B}$ as in ~(\ref{def of S 1}). Given $0 < \eta \leq 1$,
one of the following three alternatives must hold:

(i) $|  S( \boldsymbol{\alpha})  | \leq P^{n-Q}$.

(ii) there exists $n_0 \in \mathbb{N}$ such that
$$
n_0 \ll P^{r_d(d-1) \eta} \text{  and  } \|  n_0  \boldsymbol{\alpha} \| \ll P^{ -d + r_d (d-1) \eta}.
$$

(iii) $ z_{R} (\mathfrak{M}_d) \gg R^{ (d-1)n - 2^{d-1}(Q/ \eta) - \varepsilon}$
holds with $R = P^{\eta}$.
\newline
\newline
All implicit constants 
depend at most on
$n,d, r_d, \eta, \varepsilon$ and $\mathbf{f}$.
\end{lem}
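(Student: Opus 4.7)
The plan is to reduce Lemma \ref{Lemma 15.1 in S} to the single-polynomial differencing estimates, Lemmas \ref{Lemma 13.1 in S} and \ref{Lemma 14.2 in S}, by contracting the system against $\boldsymbol{\alpha}$. Set $G(\mathbf{x}) := \boldsymbol{\alpha}\cdot\boldsymbol{\psi}(\mathbf{x}) = \sum_{j=1}^{r_d}\alpha_j\,\psi_j(\mathbf{x})$, so that $S(\boldsymbol{\alpha}) = S'(G, P, \mathfrak{B})$ in the notation of Lemma \ref{Lemma 13.1 in S}. The degree-$d$ part of $G$ is $G^{(d)} = \boldsymbol{\alpha}\cdot\mathbf{f}$, and since $\Gamma_{d,G^{(j)}}\equiv 0$ for $0\le j<d$ while $\Gamma_d$ is additive in its polynomial, the lower-degree components of $\boldsymbol{\psi}$ are invisible to Weyl differencing. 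Hence the extension from forms to polynomials requires no extra work at this step.

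Assuming alternative (i) fails, so $|S(\boldsymbol{\alpha})|>P^{n-Q}$, Lemma \ref{Lemma 14.2 in S} applied to $G$ with parameter $\eta$ yields at least $\gg P^{n(d-1)\eta - 2^{d-1}Q - \eta\varepsilon}$ integer tuples $(\mathbf{x}_1,\ldots,\mathbf{x}_{d-1})\in (P^{\eta}\mathfrak{B}_1)^{d-1}$ for which
\[
\bigl\|\Gamma_{d,G^{(d)}}(\mathbf{x}_1,\ldots,\mathbf{x}_{d-1},\mathbf{e}_i)\bigr\|<P^{-d+(d-1)\eta}\qquad (1\le i\le n).
\]
Linearity of $\Gamma_d$ in the defining form of its last slot gives $\Gamma_{d,G^{(d)}}(\mathbf{x}_1,\ldots,\mathbf{x}_{d-1},\mathbf{e}_i)=\sum_{j=1}^{r_d}\alpha_j\,m_{ij}$ with $m_{ij}=\Gamma_{d,f_j}(\mathbf{x}_1,\ldots,\mathbf{x}_{d-1},\mathbf{e}_i)$; after clearing a common denominator of the coefficients of $\mathbf{f}$ (absorbed into the implicit constant, allowed to depend on $\mathbf{f}$) we may regard each $m_{ij}$ as an integer.

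Now dichotomize by the rank of the matrix $M(\mathbf{x}_1,\ldots,\mathbf{x}_{d-1})=[m_{ij}]$. Either at least half of these qualifying tuples lie on $\mathfrak{M}_d$, in which case
$z_{P^{\eta}}(\mathfrak{M}_d)\gg P^{n(d-1)\eta - 2^{d-1}Q - \eta\varepsilon} = R^{(d-1)n - 2^{d-1}(Q/\eta) - \varepsilon}$ with $R=P^{\eta}$, and (iii) holds; or there exists a qualifying tuple at which $M$ has full column rank $r_d$, and we select rows $i_1,\ldots,i_{r_d}$ forming a nonsingular $r_d\times r_d$ integer minor $\widetilde M$. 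Then $\widetilde M\boldsymbol{\alpha}=\mathbf{y}+\boldsymbol{\beta}$ with $\mathbf{y}\in\mathbb{Z}^{r_d}$ and $\|\boldsymbol{\beta}\|_{\infty}\ll P^{-d+(d-1)\eta}$. Multilinearity of $\Gamma_{d,f_j}$ combined with $|\mathbf{x}_k|\ll P^{\eta}$ yields $|m_{ij}|\ll P^{(d-1)\eta}$; hence $|\det\widetilde M|\ll P^{r_d(d-1)\eta}$ and the entries of $\operatorname{adj}(\widetilde M)$ are $\ll P^{(r_d-1)(d-1)\eta}$. Multiplying through by the adjugate gives
\[
(\det\widetilde M)\,\boldsymbol{\alpha} = \operatorname{adj}(\widetilde M)\,\mathbf{y} + \operatorname{adj}(\widetilde M)\,\boldsymbol{\beta},
\]
so setting $n_0 := |\det\widetilde M|\in\mathbb{N}$ produces
\[
n_0 \ll P^{r_d(d-1)\eta},\qquad \|n_0\boldsymbol{\alpha}\|\ll P^{(r_d-1)(d-1)\eta}\cdot P^{-d+(d-1)\eta} = P^{-d+r_d(d-1)\eta},
\]
which is alternative (ii).

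The main obstacle is bookkeeping rather than any single analytic step: one must track $\varepsilon$ through the rescaling $R=P^{\eta}$ so that the exponent in (iii) becomes $R^{(d-1)n - 2^{d-1}(Q/\eta) - \varepsilon}$, ensure that the halving in the dichotomy costs only a constant absorbed into $\ll$, and verify that every implicit constant depends only on $n,d,r_d,\eta,\varepsilon$ and on $\mathbf{f}$ (the dependence on $\mathbf{f}$ entering through both the multilinear size bounds on $m_{ij}$ and the rational-to-integral rescaling of $\boldsymbol{\psi}$). Once these pieces are aligned, the three alternatives are mutually exhaustive and the proof is complete.
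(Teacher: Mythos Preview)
Your argument is correct and follows essentially the same route as the paper's proof: contract against $\boldsymbol{\alpha}$, apply Lemma~\ref{Lemma 14.2 in S}, then split according to whether the matrix $[m_{ij}]$ has full rank for some qualifying tuple, using the adjugate (equivalently, Cramer's rule as in the paper) to extract $n_0$ in the full-rank case. The only cosmetic difference is that the paper uses the cleaner ``all tuples have rank $<r_d$ versus at least one has rank $r_d$'' dichotomy rather than your ``at least half'' split, but this affects only an absorbed constant.
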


\begin{proof}
Take $\boldsymbol{\alpha} \in \mathbb{R}^{r_d}$. Let
$\boldsymbol{\alpha} \cdot \boldsymbol{\psi}(\mathbf{x}) = G^{(0)}+ G^{(1)}(\mathbf{x}) + \cdots +  G^{(d)}(\mathbf{x})$, where
$G^{(j)}$ is a form of degree $j \ (1 \leq  j \leq d)$, and $G^{(0)} \in \mathbb{R}$.
Suppose $(i)$ fails, then we may apply Lemma \ref{Lemma 14.2 in S}. The number
$N(\eta)$ of integral $(d-1)$-tuples $\mathbf{x}_1$, \ldots, $\mathbf{x}_{d-1}$ in $P^{\eta} \mathfrak{B}_1$ with
\begin{equation}
\label{(15.2) in S}
\|   \Gamma_{d, G^{(d)}} ( \mathbf{x}_1, \ldots , \mathbf{x}_{d-1}, \mathbf{e}_i  ) \| < P^{-d + (d-1) \eta } \ (i=1,\ldots, n)
\end{equation}
satisfies
$$
N(\eta) \gg R^{n(d-1) - 2^{d-1} (Q/ \eta) - \varepsilon },
$$
where $R = P^{\eta}$, and  the implicit constant in $\gg$ depends only on $n,d, \eta,$ and $\varepsilon$.

Recall $\boldsymbol{\psi} = \{ \psi_1, \ldots,  \psi_{r_d} \}$. Given $\mathbf{x}_1$, \ldots, $\mathbf{x}_{d-1}$
as above, we form the matrix
$$
[m_{ij}]_{\mathbf{x}_1, \ldots , \mathbf{x}_{d-1}} = [ \Gamma_{ d, \psi_j } ( \mathbf{x}_1, \ldots, \mathbf{x}_{d-1}, \mathbf{e}_i  ) ]
  \  \  (1 \leq i \leq n, 1 \leq j \leq r_d).
$$
Recall $f_j$ is the degree $d$ portion of $\psi_j \ (1 \leq j \leq r_d)$, and
$\mathbf{f}= \{ f_1, \ldots,  f_{r_d} \}$.
Since each $\psi_j$ is of degree $d$, it follows that $\Gamma_{d, \psi_j} = \Gamma_{d, f_j} \  (1 \leq j \leq r_d)$.
It is also clear that $G^{(d)}( \mathbf{x} ) = \boldsymbol{\alpha} \cdot \mathbf{f}(\mathbf{x})$.
Now if this matrix $[m_{ij}]_{\mathbf{x}_1, \ldots , \mathbf{x}_{d-1}}$ has rank less than $r_d$ for each of the $(d-1)$-tuples counted by $N(\eta)$, then by the definition of
$z_R(\mathfrak{M}_d)$ we have that
$$
z_R(\mathfrak{M}_d) \geq N(\eta) \gg R^{n(d-1) - 2^{d-1} (Q/ \eta) - \varepsilon },
$$
where again the implicit constant in $\gg$ depends only on $n,d, \eta,$ and $\varepsilon$.
Thus we have $(iii)$ in this case. Hence, we may suppose that at least one of these matrices, which we denote by $[m_{ij}]$, has rank $r_d$.
Without loss of generality, suppose the submatrix $M_0$ formed by taking the first $r_d$ columns of $[m_{ij}]$
has rank $r_d$. Let $n_0 = \det (M_0)$.

It follows from the definition of $\Gamma_{d, f_j}$ that every monomial occurring in
$\Gamma_{d, f_j} (\mathbf{x}_1, \ldots, \mathbf{x}_d)$ has some component of $\mathbf{x}_i$ as a factor for each $1 \leq i \leq d$ \cite[Proof of Lemma 11.2]{S}.
Also, the maximum absolute value of all coefficients of
$\Gamma_{d, f_j}$ is bounded by a constant dependent only on $d$ and the coefficients of $f_j$ \cite[Lemma 11.3]{S}.
Therefore, by the construction of $[m_{ij}]$ we have
$$
m_{ij} \ll R^{d-1},
$$
and hence
$$
n_0 \ll R^{r_d(d-1)} = P^{r_d(d-1)\eta},
$$
where the implicit constants in $\ll$ depend only on $r_d$ and $\mathbf{f}$.

We have
$$
\Gamma_{d, G^{(d)}} = \sum_{j=1}^{r_d} \Gamma_{d, \alpha_j f_j} = \sum_{j=1}^{r_d} \alpha_j \Gamma_{d, f_j}.
$$
Hence, from ~(\ref{(15.2) in S}) we may write
$$
\sum_{j=1}^{r_d} \alpha_j m_{ij} = c_i + \beta_i \ (1 \leq i \leq n),
$$
where the $c_i$ are integers and the $\beta_i$ are real numbers bounded by the right hand side of ~(\ref{(15.2) in S}).
Let $u_1, \ldots, u_{r_d}$ be the solution of the system of linear equations
\begin{equation}
\label{(15.3) in S}
\sum_{j=1}^{r_d} u_j m_{ij} = n_0 c_i \ (1 \leq i \leq r_d).
\end{equation}
Then
\begin{equation}
\label{(15.4) in S}
\sum_{j=1}^{r_d} (  n_0 \alpha_j - u_j  )m_{ij} = n_0 \beta_i \ (1 \leq i \leq r_d).
\end{equation}
By applying Cram\'{e}r's rule to ~(\ref{(15.3) in S}), it follows that the $u_j$ are integers. Also, by applying Cram\'{e}r's rule  to ~(\ref{(15.4) in S}),
we obtain that
\begin{eqnarray}
\| n_0 \alpha_j \| \leq | n_0 \alpha_j - u_j | \ll R^{ (d-1)(r_d - 1) }  P^{-d + (d-1) \eta} = P^{ - d + (d-1) r_d \eta } ,
\end{eqnarray}
where the implicit constant in $\ll$ depends only on $r_d$ and $\mathbf{f}$.
This completes the proof of Lemma \ref{Lemma 15.1 in S}
\end{proof}

We define $g_d( \mathbf{f} )$
to be the largest real number such that
\begin{equation}
\label{def gd}
z_P(\mathfrak{M}_d) \ll P^{n(d-1) - g_d( \mathbf{f} ) + \varepsilon}
\end{equation}
holds for each $\varepsilon >0$. It was proved in \cite[pp. 280, Corollary]{S} that
\begin{equation}
\label{h and g}
h(\mathbf{f}) < \frac{d!}{  (\log 2)^{d} }  \left( g_d( \mathbf{f} ) + (d-1)r_d (r_d - 1)  \right).
\end{equation}
Let
$$
\gamma_d = \frac{2^{d-1} (d-1) r_d}{ g_d( \mathbf{f} ) }
$$
when $g_d( \mathbf{f} ) > 0$.
We let $\gamma_d = + \infty$ if $g_d( \mathbf{f} ) = 0$.
We also define
\begin{equation}
\label{def gamma'}
\gamma'_d = \frac{ 2^{d-1} }{ g_d( \mathbf{f} ) } = \frac{ \gamma_d }{ (d-1) r_d }.
\end{equation}
\begin{cor}\cite[pp.276, Corollary]{S}
\label{cor 15.1 in S}
Given a box $\mathfrak{B}$ with sides $\leq 1$, we define the sum  $S( \boldsymbol{\alpha}) $ associated with $\boldsymbol{\psi}$ and $\mathfrak{B}$ as in ~(\ref{def of S 1}).
Suppose $\varepsilon' > 0$ is sufficiently small and $Q>0$ satisfies
$$
Q \gamma'_d < 1.
$$
Then one of the following alternatives must hold:

(i) $|  S( \boldsymbol{\alpha})  | \leq P^{n-Q}$.

(ii) there exists $n_0 \in \mathbb{N}$ such that
$$
n_0 \ll P^{Q \gamma_d + \varepsilon'} \text{  and  } \|  n_0 \boldsymbol{\alpha} \| \ll P^{ -d + Q \gamma_d + \varepsilon'},
$$
where the implicit constants in $\ll$ depend only on $n, d, r_d, \varepsilon', Q$ and $\mathbf{f}$.
\end{cor}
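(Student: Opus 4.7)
The plan is to deduce this corollary by applying Lemma \ref{Lemma 15.1 in S} with a single carefully tuned value of $\eta$, and then ruling out alternative (iii) of that lemma via the definition of $g_d(\mathbf{f})$ in (\ref{def gd}). The key observation is that the exponent $-2^{d-1}(Q/\eta)$ appearing in alternative (iii) of the lemma must be compared against the upper bound on $z_R(\mathfrak{M}_d)$, and the condition $Q\gamma_d' < 1$ is precisely what guarantees that we have room to choose $\eta \in (Q\gamma_d',1]$ so that the comparison forces a contradiction.

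Concretely, I would first set $\eta = Q\gamma_d' + \delta$ for a small $\delta > 0$ to be fixed at the end; the hypothesis $Q\gamma_d' < 1$ ensures $\eta \leq 1$ for $\delta$ small enough (and also implicitly $g_d(\mathbf{f}) > 0$, since otherwise $\gamma_d' = +\infty$ and the hypothesis is vacuous). With this choice I apply Lemma \ref{Lemma 15.1 in S} with parameters $Q, \eta, \varepsilon$, obtaining that at least one of (i), (ii), (iii) holds. If (i) holds we are done. If (iii) holds, then with $R = P^\eta$ one has $z_R(\mathfrak{M}_d) \gg R^{n(d-1) - 2^{d-1}(Q/\eta) - \varepsilon}$, while (\ref{def gd}) gives the opposing estimate $z_R(\mathfrak{M}_d) \ll R^{n(d-1) - g_d(\mathbf{f}) + \varepsilon}$. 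Comparing exponents forces $g_d(\mathbf{f}) \leq 2^{d-1}Q/\eta + 2\varepsilon$, i.e.\ $\eta \leq 2^{d-1}Q/(g_d(\mathbf{f}) - 2\varepsilon)$; but for $\varepsilon$ chosen small relative to $\delta$ this contradicts $\eta = Q\gamma_d' + \delta = 2^{d-1}Q/g_d(\mathbf{f}) + \delta$. Therefore alternative (iii) cannot occur.

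It remains to unpack alternative (ii) of the lemma, which then gives $n_0 \ll P^{r_d(d-1)\eta}$ and $\|n_0\boldsymbol{\alpha}\| \ll P^{-d + r_d(d-1)\eta}$. Using the identity $\gamma_d = (d-1)r_d \gamma_d'$ from (\ref{def gamma'}), we have $r_d(d-1)\eta = r_d(d-1)(Q\gamma_d' + \delta) = Q\gamma_d + r_d(d-1)\delta$. Choosing $\delta = \varepsilon'/(r_d(d-1))$ and then $\varepsilon$ small enough relative to this $\delta$ so that the argument above still rules out (iii), we obtain exactly the claimed bounds $n_0 \ll P^{Q\gamma_d + \varepsilon'}$ and $\|n_0\boldsymbol{\alpha}\| \ll P^{-d + Q\gamma_d + \varepsilon'}$, with implicit constants depending only on $n,d,r_d,\varepsilon',Q,\mathbf{f}$ (the dependence on $\eta$ in Lemma \ref{Lemma 15.1 in S} becomes a dependence on $Q$ and $\varepsilon'$ through our choice of $\eta$).

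The only real delicacy lies in the bookkeeping of the three small parameters $\varepsilon,\varepsilon',\delta$ to simultaneously (a) rule out (iii) strictly, and (b) match the exponent $Q\gamma_d + \varepsilon'$ in (ii); this is why the statement requires $\varepsilon'$ to be sufficiently small. There is no substantive new estimate to prove beyond Lemma \ref{Lemma 15.1 in S} and the definition of $g_d(\mathbf{f})$.
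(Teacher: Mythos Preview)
Your proposal is correct and follows essentially the same approach as the paper's own proof: choose $\eta = Q\gamma_d' + \delta$ (the paper writes $\varepsilon_1$ for your $\delta$), observe that this forces $2^{d-1}Q/\eta < g_d(\mathbf{f})$ so that the definition (\ref{def gd}) rules out alternative (iii) of Lemma~\ref{Lemma 15.1 in S} once $\varepsilon$ is taken small enough, and then read off (ii) with $r_d(d-1)\eta = Q\gamma_d + r_d(d-1)\delta = Q\gamma_d + \varepsilon'$. The only cosmetic difference is that the paper computes $2^{d-1}Q/\eta < g_d(\mathbf{f})$ directly and then picks $\varepsilon_0$ in the gap, whereas you argue by contradiction from the exponent comparison; these are equivalent.
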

Note the fact that the implicit constant depends on $\mathbf{f}$, but not
on other lower order terms of $\boldsymbol{\psi}$ is an important feature which
we make use of in Section \ref{section singular series}.

\begin{proof}
Since $ Q \gamma'_d < 1$, we can choose $\varepsilon_1 > 0$ sufficiently small so that
$\eta = Q \gamma'_d + \varepsilon_1$ satisfies $0 < \eta \leq 1$.
Also with this choice of $\eta$, we have
\begin{eqnarray}
\frac{2^{d-1} Q} { \eta} &=& \frac{ 2^{d-1} Q}{ Q \gamma'_d + \varepsilon_1 }
= \frac{ g_d( \mathbf{f} )}{ 1+ \varepsilon_1 g_d(\mathbf{f} ) / ( 2^{d-1} Q  ) }
\notag
< g_d( \mathbf{f} ).
\notag
\end{eqnarray}
Then choose $\varepsilon_0 > 0$ such that $2^{d-1}Q / \eta + \varepsilon_0 < g_d( \mathbf{f} )$. By the definition of $g_d( \mathbf{f} )$ we have
$$
z_R(\mathfrak{M}_d) \ll R^{n(d-1) - 2^{d-1}Q / \eta - \varepsilon_0}.
$$
Thus in this case we see that the statement $(iii)$ in Lemma \ref{Lemma 15.1 in S}
can not occur with $0 < \varepsilon < \varepsilon_0$.
Also the equation $\eta = Q \gamma'_d + \varepsilon_1$ implies
$$
r_d (d-1) \eta = Q \gamma_d +  r_d (d-1) \varepsilon_1.
$$
Therefore, from Lemma \ref{Lemma 15.1 in S} (applying it with $0 < \varepsilon < \varepsilon_0$) we obtain our result with
$\varepsilon' = r_d (d-1) \varepsilon_1$.
\end{proof}

For the rest of this section, we assume $\boldsymbol{\psi}$ to be a system of integral polynomials of degree $d$.
When the polynomials $\boldsymbol{\psi}$ in question are over $\mathbb{Z}$, we consider the following.

Hypothesis ($\star$). Let $\mathfrak{B}$ be a box in $\mathbb{R}^n$. For any $\Delta > 0$,
there exists $P_1 = P_1(\mathbf{f}, \Omega,  \Delta, \mathfrak{B})$ such that for $P > P_1$,
each $\boldsymbol{\alpha} \in \mathbb{T}^{r_d}$ satisfies at least one of the following two alternatives. Either

(i)  $|S (\boldsymbol{\alpha}) | \leq P^{n - \Delta \Omega},$ or

(ii) there exists  $q = q(\boldsymbol{\alpha}) \in \mathbb{N}$ such that  
$$
q \leq P^{\Delta} \  \text{  and  } \  \|  q \boldsymbol{\alpha}  \| \leq P^{-d + \Delta}. 
$$

We will say that the restricted Hypothesis ($\star$) holds if the above condition holds for each $\Delta$ in $0 < \Delta \leq 1$.

The important thing to note here is that the lower bound for $P$ in Hypothesis ($\star$) only depends on $\mathbf{f}$, and not on $\boldsymbol{\psi}$. In other words, only the highest degree portion of the polynomials $\boldsymbol{\psi}$ play a role in this estimate.

\begin{prop}\cite[Proposition $\text{II}_0$]{S}
\label{prop omega and g}
Given a box $\mathfrak{B}$ with sides $\leq 1$, Hypothesis $(\star)$ is true for any $\Omega$ in
\begin{equation}
\label{(10.6) in S}
0 < \Omega < \frac{g_d( \mathbf{f} )}{2^{d-1} (d-1) r_d}.
\end{equation}

\end{prop}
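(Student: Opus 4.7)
\medskip

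The plan is to obtain Proposition \ref{prop omega and g} as essentially a bookkeeping reformulation of Corollary \ref{cor 15.1 in S}: the parameter $Q$ in the corollary will be set to $Q = \Delta \Omega$, and the two alternatives of the corollary will translate directly into the two alternatives of Hypothesis $(\star)$.

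First I would reduce to the restricted Hypothesis $(\star)$, i.e. the range $0 < \Delta \leq 1$. Indeed, if $(\star)$ holds for $\Delta = 1$ with some threshold $P_1$, then for any $\Delta > 1$ and $P > P_1$ the bounds $P^{n-\Omega} \leq P^{n - \Omega\Delta}$, $P^1 \leq P^{\Delta}$ and $P^{-d+1} \leq P^{-d+\Delta}$ show that either alternative in the $\Delta = 1$ version immediately implies the corresponding alternative for the given $\Delta$. So it suffices to treat $0 < \Delta \leq 1$.

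Now fix $\Omega$ with $0 < \Omega < 1/\gamma_d = g_d(\mathbf{f})/(2^{d-1}(d-1)r_d)$ and fix $\Delta \in (0,1]$. Set
\[
Q \;=\; \Delta\, \Omega.
\]
Since $\gamma'_d = \gamma_d/((d-1)r_d)$ and $\Omega \gamma_d < 1$, we get
\[
Q \gamma'_d \;=\; \Delta\, \Omega\, \gamma'_d \;\leq\; \frac{\Omega\gamma_d}{(d-1)r_d} \;<\; \frac{1}{(d-1)r_d} \;\leq\; 1,
\]
using $d \geq 2$ and $r_d \geq 1$, so the hypothesis of Corollary \ref{cor 15.1 in S} is satisfied. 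Moreover, $Q \gamma_d = \Delta\, \Omega\, \gamma_d < \Delta$, so I can pick $\varepsilon' > 0$ small enough that $Q\gamma_d + \varepsilon' < \Delta$ (and sufficiently small as required by the corollary).

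Applying Corollary \ref{cor 15.1 in S} to any $\boldsymbol{\alpha} \in \mathbb{T}^{r_d}$ yields one of two conclusions. If (i) of the corollary holds, then $|S(\boldsymbol{\alpha})| \leq P^{n - Q} = P^{n - \Delta\Omega}$, which is precisely alternative (i) of Hypothesis $(\star)$. If (ii) of the corollary holds, there exists $n_0 \in \mathbb{N}$ with $n_0 \leq C_1 P^{Q\gamma_d + \varepsilon'}$ and $\|n_0 \boldsymbol{\alpha}\| \leq C_2 P^{-d + Q\gamma_d + \varepsilon'}$, where $C_1, C_2$ depend only on $n, d, r_d, \varepsilon', Q$ and $\mathbf{f}$ (crucially not on the lower-degree parts of $\boldsymbol{\psi}$). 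Because $Q\gamma_d + \varepsilon' < \Delta$, choosing $P_1 = P_1(\mathbf{f}, \Omega, \Delta, \mathfrak{B})$ large enough so that $C_1 P^{Q\gamma_d + \varepsilon' - \Delta} \leq 1$ and $C_2 P^{Q\gamma_d + \varepsilon' - \Delta} \leq 1$ hold for all $P > P_1$, we obtain $q := n_0$ with $q \leq P^{\Delta}$ and $\|q\boldsymbol{\alpha}\| \leq P^{-d+\Delta}$, which is alternative (ii) of $(\star)$.

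There is no substantial obstacle here; the content of the argument lies already in Corollary \ref{cor 15.1 in S}, and the proposition amounts to identifying $Q = \Delta\Omega$ and absorbing implicit constants into $P_1$. The one place requiring a small amount of care is the dependence of $P_1$ only on $(\mathbf{f}, \Omega, \Delta, \mathfrak{B})$, which is guaranteed by the analogous dependence of the implicit constants in Corollary \ref{cor 15.1 in S}.
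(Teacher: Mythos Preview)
Your core argument --- setting $Q = \Delta\Omega$ and reading off the two alternatives of Hypothesis $(\star)$ from Corollary \ref{cor 15.1 in S} --- is exactly the paper's approach, and your treatment of the range $0 < \Delta \leq 1$ is correct. However, your reduction from the full Hypothesis $(\star)$ to the restricted one contains an error: the claimed inequality $P^{n-\Omega} \leq P^{n-\Omega\Delta}$ is false for $\Delta > 1$ and $P > 1$ (the exponent $n - \Omega\Delta$ is \emph{smaller} than $n - \Omega$). Thus alternative (i) at $\Delta = 1$ does not imply alternative (i) at $\Delta > 1$, and since alternative (ii) may fail at $\Delta = 1$ for a given $\boldsymbol{\alpha}$, your reduction does not go through.

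The paper handles the full range of $\Delta$ differently. It applies Corollary \ref{cor 15.1 in S} directly for all $\Delta \leq (d-1)r_d$: the check $Q\gamma'_d < 1$ still succeeds there, since $Q\gamma'_d = \Delta\Omega\gamma_d/((d-1)r_d) < \Delta/((d-1)r_d) \leq 1$. For $\Delta > (d-1)r_d$ the paper does not reduce to smaller $\Delta$ at all, but simply observes that alternative (ii) of $(\star)$ holds for \emph{every} $\boldsymbol{\alpha}$ by Dirichlet's theorem on simultaneous Diophantine approximation. So the fix is either to extend your direct application of the corollary to the larger range $\Delta \leq (d-1)r_d$, or to replace your faulty reduction by the Dirichlet argument for large $\Delta$.
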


\begin{proof}
It follows from ~(\ref{(10.6) in S}) that
$ \Omega \gamma_d < 1$.
We set $Q = \Delta \Omega$, and let $\varepsilon > 0$ be sufficiently small so that $Q \gamma_d + \varepsilon < \Delta$.
First, we suppose $\Delta \leq (d-1)r_d$. In this case, it follows that $Q \gamma'_d < 1$.
Thus it follows from Corollary \ref{cor 15.1 in S} that there exists $P_0 = P_0( \mathbf{f}, \Omega, \Delta)$ such that whenever $P > P_0$, either

$(i)$ $|S (\boldsymbol{\alpha})| \leq P^{n - \Delta \Omega}$, or

$(ii)$ there exists $q \in \mathbb{N}$ such that
$$
q \leq P^{\Delta} \text{ and } \|  q \boldsymbol{\alpha} \| \leq P^{-d + \Delta}.
$$
On the other hand, if $\Delta > (d-1)r_d$, then the case $(ii)$ above is always true by Dirichlet's Theorem on Diophantine approximation.
\end{proof}

For each $q \in \mathbb{N}$, we denote $\mathbb{U}_q$ as the group of units in $\mathbb{Z}/ q \mathbb{Z}.$ 
Given $\mathbf{m} \in \mathbb{U}_q^{r_d}$, we define
\begin{equation}
\label{def of E}
E(q^{-1} \mathbf{m}) = E( \boldsymbol{\psi}, q ; q^{-1} \mathbf{m} ) = q^{-n} \sum_{ \mathbf{x} \ (\text{mod }q) } e(q^{-1} \ \mathbf{m} \cdot \boldsymbol{\psi} (\mathbf{x}) ).
\end{equation}

\begin{lem}\cite[Lemma 7.1]{S}
\label{bound on E}
Suppose $\Omega$ satisfies ~(\ref{(10.6) in S}).
Then for $0 < Q < \Omega$, we have
\begin{equation}
\label{(7.1) in S}
| E (q^{-1} \mathbf{m} ) | \ll q^{- Q},
\end{equation}
where the implicit constant in $\ll$ depends only on $\mathbf{f}$, $Q$ and $\Omega$.
\end{lem}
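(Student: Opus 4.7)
The plan is to reduce the complete exponential sum $E(q^{-1}\mathbf{m})$ to the truncated sum $S(\boldsymbol{\alpha})$ controlled by Hypothesis~$(\star)$, and then to eliminate the rational-approximation alternative by exploiting the primitivity of $\mathbf{m}$ modulo $q$. First, I would set $\mathfrak{B} = [0,1)^n$ and $P = q$. Since $e(q^{-1}\mathbf{m}\cdot\boldsymbol{\psi}(\mathbf{x}))$ is invariant under translation of $\mathbf{x}$ by $q\mathbb{Z}^n$, the set $P\mathfrak{B} \cap \mathbb{Z}^n = \{0,1,\ldots,q-1\}^n$ forms a complete set of residue representatives modulo $q$, so
\[
S(q^{-1}\mathbf{m}) \;=\; \sum_{\mathbf{x} \pmod q} e(q^{-1}\mathbf{m}\cdot\boldsymbol{\psi}(\mathbf{x})) \;=\; q^n \cdot E(q^{-1}\mathbf{m}).
\]
Hence $|E(q^{-1}\mathbf{m})| = |S(q^{-1}\mathbf{m})|/q^n$, and the task reduces to proving $|S(q^{-1}\mathbf{m})| \leq q^{n-Q}$ for $q$ sufficiently large.

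Next, I would apply Proposition \ref{prop omega and g} at $\boldsymbol{\alpha} = q^{-1}\mathbf{m}$ with the parameter choice $\Delta = Q/\Omega$. Since $0 < Q < \Omega$, we have $0 < \Delta < 1$. For all $q > P_1(\mathbf{f},\Omega,\Delta,\mathfrak{B})$ (a threshold depending only on $\mathbf{f}, \Omega, Q$), one of the two alternatives in Hypothesis~$(\star)$ holds: either $|S(q^{-1}\mathbf{m})| \leq q^{n-\Delta\Omega} = q^{n-Q}$, which immediately yields $|E(q^{-1}\mathbf{m})| \leq q^{-Q}$; or else there exists $q' \in \mathbb{N}$ with $q' \leq q^{\Delta}$ and $\|q'\cdot q^{-1}\mathbf{m}\| \leq q^{-d+\Delta}$.

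I would then rule out the second alternative. Because $\Delta < 1$, we have $q' \leq q^{\Delta} < q$, so $q \nmid q'$. Combined with $\gcd(m_r,q)=1$ for every $r$, this forces $q \nmid q' m_r$, so the fractional part of $q' m_r / q$ has the form $s/q$ with $1 \leq s \leq q-1$, giving $\|q' m_r/q\| \geq 1/q$ and hence $\|q' \cdot q^{-1}\mathbf{m}\| \geq 1/q$. For the Diophantine inequality in (ii) to hold one would need $1/q \leq q^{-d+\Delta}$, equivalently $\Delta \geq d - 1$. Since $d \geq 2$ and $\Delta < 1$, this fails, so only the first alternative survives. For the finitely many $q$ below the threshold $P_1$, the desired bound is absorbed into the implied constant via the trivial estimate $|E(q^{-1}\mathbf{m})| \leq 1 \leq P_1^{Q}\,q^{-Q}$, producing a constant that depends only on $\mathbf{f}, \Omega, Q$, as required.

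The main obstacle is securing the condition $\Delta < 1$, since this is precisely what ensures $q' < q$ and lets the primitivity of $\mathbf{m}$ eliminate alternative (ii); this in turn is exactly the strict inequality $Q < \Omega$ in the hypothesis. Were equality allowed, $\Delta = 1$ would permit $q' = q$, for which $\|q' \cdot q^{-1}\mathbf{m}\| = 0$ trivially satisfies (ii) and the argument collapses.
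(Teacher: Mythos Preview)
Your proof is correct and follows essentially the same route as the paper: write $E(q^{-1}\mathbf{m}) = q^{-n}S(q^{-1}\mathbf{m})$ with $P=q$ and $\mathfrak{B}=[0,1)^n$, invoke Hypothesis~$(\star)$ via Proposition~\ref{prop omega and g} at $\Delta = Q/\Omega < 1$, and use the primitivity of $\mathbf{m}$ together with $q^{-d+\Delta} < q^{-1}$ to exclude alternative~(ii). Your write-up is somewhat more explicit than the paper's (you spell out the lower bound $\|q' m_r/q\|\geq 1/q$ and the handling of small $q$), but the argument is the same.
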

Again the fact that the implicit constant depends on $\mathbf{f}$, but not
on other lower order terms of $\boldsymbol{\psi}$ becomes crucial when we apply this lemma in Section \ref{section singular series}.

\begin{proof}
Since $E(q^{-1} \mathbf{m} ) = q^{-n} S( \boldsymbol{\alpha} )$ with $\boldsymbol{\alpha}  = q^{-1} \mathbf{m}$, $P = q$ and $\mathfrak{B} = [0,1)^{r_d}$,
and with our choice of $\Omega$
we know that Hypothesis ($\star$) is satisfied by Proposition \ref{prop omega and g}. Thus we apply it with $\Delta = Q / \Omega < 1$. Let $q$ be sufficiently large, and suppose we are in case $(ii)$ of Hypothesis ($\star$). Then we know there
exists $q_0 \leq q^{\Delta} < q$ (when $q \not = 1$) with
$$
\|  q_0 q^{-1} \mathbf{m}  \| \leq q^{-d + \Delta} < q^{-1}.
$$
Since $(\mathbf{m}, q) = 1$, this is not possible.
Therefore, we must have case $(i)$ of Hypothesis ($\star$), which is precisely the inequality
~(\ref{(7.1) in S}).
\end{proof}

\section{Hardy-Littlewood Circle Method: Minor Arcs}
\label{section circle method}
\label{section minor arc}

For each $q \in \mathbb{N}$, recall we let $\mathbb{U}_q$ be the group of units in $\mathbb{Z}/ q \mathbb{Z}$. When $q=1$ we let $\mathbb{U}_1 = \{0\}$.
Let us denote $\mathbb{T} = \mathbb{R} / \mathbb{Z}$.
For a given value of $C>0$ and an integer $1 \leq q \leq (\log N)^C$, we define the \textit{major arc}
$$
\mathfrak{M}_{{m}, q}(C) = \{  \alpha \in \mathbb{T} : \| \alpha - m / q \| \leq N^{-d} (\log N)^C  \}
$$
for each $m \in \mathbb{U}_q$. Recall $\| \beta \|$ is the distance from $\beta \in \mathbb{R}$ to the nearest integer, which induces a metric on $\mathbb{T}$ via $d(\alpha, \beta) = \| \alpha - \beta \|$.
These arcs are disjoint for $N$ sufficiently large,
and we define
$$
\mathfrak{M}(C) = \bigcup_{q \leq (\log N)^C } \bigcup_{ {m} \in \mathbb{U}_q }  \mathfrak{M}_{{m}, q}(C).
$$
We then define the \textit{minor arcs} to be
$$
\mathfrak{m}(C) = \mathbb{T} \backslash \mathfrak{M}(C).
$$

We obtain the following bound on the minor arcs in this section.
\begin{prop}
\label{prop minor arc bound}
Let $b(\mathbf{x}) \in \mathbb{Z}[x_1, \ldots, x_n]$ be a polynomial of degree $d$. Let $T(b; \alpha)$
be defined as in ~(\ref{def of exp sum T}).
Then there exists a positive number $A_d$ dependent only on $d$ such that the following holds. Suppose $b(\mathbf{x})$
satisfies $h^{\star}(f_b) > A_d$.
Then, given any $c>0$, there exists $C>0$ such that
$$
\int_{\mathfrak{m}(C)}  T(b; \alpha)  \ d {\alpha}  
\ll
\frac{N^{n - d } }{(\log N)^{c}}.
$$
\end{prop}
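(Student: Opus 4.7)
The plan is to follow the minor arc template of Cook-Magyar \cite{CM}, adapted so that the linear count $h^{\star}$ plays the role of the Birch rank. The argument combines three ingredients, and the main technical work lies in quantitative bookkeeping rather than any genuinely new idea.

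First, by the hypothesis $h^{\star}(f_b) > A_d$, the leading form $f_b$ admits a representation with $M = h^{\star}(f_b)$ extractable linear forms. After an invertible rational change of variables, Lemma \ref{h ineq2} gives the decomposition $f_b = g_M + \widetilde{f}(x_{M+1}, \ldots, x_n)$ with $h(g_M) \geq M$. I would then apply the regularization process (Proposition \ref{prop reg par}) to the singleton system $\{f_b\}$, choosing the variable partition so as to isolate the $M$ linear directions produced above, and obtain a regular auxiliary system whose forms have $h$-invariants as large as desired.

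Second, with this regularity, Schmidt's Corollary \ref{cor 15.1 in S} (equivalently Proposition \ref{prop omega and g}) yields a Weyl-type dichotomy for the unweighted exponential sum $F(b;\alpha) := \sum_{\mathbf{x} \in [0,N]^n \cap \mathbb{Z}^n} e(\alpha b(\mathbf{x}))$: either $|F(b;\alpha)| \ll N^{n-\Omega}$, or $\alpha$ admits a rational approximation of small denominator. By selecting $A_d$ large enough to push $h(f_b) \geq h^{\star}(f_b) > A_d$ through Schmidt's relation (\ref{h and g}) and choosing $\Omega > d$ compatibly with (\ref{(10.6) in S}), we arrange polynomial savings strictly beyond $N^d$. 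On the Cook-Magyar minor arcs $\mathfrak{m}(C)$ the denominator alternative is excluded after matching parameters appropriately.

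Third, I would transfer the sup bound from $F(b;\alpha)$ to the prime-weighted sum $T(b;\alpha)$ using a Vaughan-type decomposition of $\Lambda$, estimating the resulting Type I and Type II bilinear exponential sums by reducing each to an $F$-type sum and invoking the Weyl bound of the previous step. This transfer incurs a fixed polynomial loss in the savings, so enlarging $A_d$ further yields $\sup_{\alpha \in \mathfrak{m}(C)} |T(b;\alpha)| \ll N^{n - \Omega'}$ with $\Omega' > d$. The trivial measure bound $|\mathfrak{m}(C)| \leq 1$ then gives
$$\int_{\mathfrak{m}(C)} T(b;\alpha)\, d\alpha \ll N^{n-\Omega'} \ll \frac{N^{n-d}}{(\log N)^c}$$
for any prescribed $c > 0$.

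The main obstacle is the quantitative bookkeeping: $A_d$ must be selected so that the compounded growth required by the regularization functions $\mathcal{F}_i$, by Schmidt's Weyl threshold (through $g_d$, $\gamma_d$ and $\Omega$), and by the Vaughan transfer loss, are all simultaneously satisfied with final exponent $\Omega' > d$. This is essentially the same bookkeeping that Cook-Magyar execute for the Birch rank, now translated into the $h^{\star}$-invariant framework via Lemma \ref{h ineq2} and the regularity thresholds of Theorem \ref{Schmidt main}.
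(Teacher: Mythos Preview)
There is a genuine gap at your second step. On the minor arcs $\mathfrak{m}(C)$ you cannot conclude power savings for the unweighted sum $F(b;\alpha)$. Recall that $\mathfrak{m}(C)$ is the complement of arcs around rationals $m/q$ with $q \leq (\log N)^C$ and radius $N^{-d}(\log N)^C$. Schmidt's dichotomy (Proposition~\ref{prop omega and g}) says: either $|F(b;\alpha)| \leq N^{n-\Delta\Omega}$, or there is $q \leq N^{\Delta}$ with $\|q\alpha\| \leq N^{-d+\Delta}$. For any \emph{fixed} $\Delta>0$ the second alternative is not excluded by $\alpha \in \mathfrak{m}(C)$, since $N^{\Delta} \gg (\log N)^C$. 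For example $\alpha = a/q$ with $q \sim (\log N)^{2C}$ lies in $\mathfrak{m}(C)$, yet $F(b;\alpha)$ can be as large as $N^n q^{-O(1)}$, which is only logarithmic savings. Hence there is no power-saving bound on $F$ to transfer, and your third step (Vaughan $+$ reduction of Type~II sums to $F$-type sums, itself unjustified for a general $n$-variable polynomial) cannot start.

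The paper's argument is structurally different. It splits the variables into three groups $(\mathbf{w},\mathbf{y},\mathbf{z})$: $\mathbf{w}$ consists of at most $d$ variables carrying a nonzero degree-$d$ monomial, $\mathbf{y}=(x_1,\ldots,x_M)$ are the $M$ linear directions supplied by $h^{\star}$, and $\mathbf{z}$ are the rest. Regularization (Proposition~\ref{prop reg par}) is applied not to $\{f_b\}$ but to two auxiliary systems $\Phi,\Psi$ of forms of degree $<d$, namely the coefficient forms arising when $b$ is expanded in the $\mathbf{w}$ and $\mathbf{y}$ variables. After stratifying by level sets of the regularized systems, $T(b;\alpha)$ factorizes as $S_0 S_1 S_2$ on each stratum, and Cauchy--Schwarz gives
\[
\Big|\int_{\mathfrak{m}(C)} T\Big|^2 \ \ll \ N^{D_1+D_2}\Big(\sup_{\mathfrak{m}(C)}|S_0|\Big)^2 \cdot \mathcal{W}.
\]
The minor-arc saving comes entirely from $S_0$, a von Mangoldt-weighted sum over the $\leq d$ variables $\mathbf{w}$; classical Weyl differencing (as in \cite[p.\,725]{CM}) gives $\sup_{\mathfrak{m}(C)}|S_0|\ll N^K(\log N)^{-c}$, only a logarithmic gain. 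The real work is Claim~2, the bound $\mathcal{W}\ll N^{2n-2K-2d-D_1-D_2}$; it is here that the largeness of $h^{\star}$ is used, via Lemma~\ref{h ineq2} and the regularity criterion Corollary~\ref{cor Schmidt}. No Vaughan identity appears.
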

The proposition is achieved by splitting the exponential sum $T(b; \alpha)$ over certain level sets based on a decomposition of the
polynomial $b(\mathbf{x})$. Thus before we get into the proof of Proposition \ref{prop minor arc bound} we first
establish this decomposition in six steps, where the resulting decomposition is given in (\ref{decom of b last}).
For simplicity, we let $f(\mathbf{x})$ be the degree $d$ portion of $b(\mathbf{x})$ for the remainder of the paper.
We let $h = h(f)$, and let $0 < M < h^{\star}(f) \leq h$ to be chosen later.

\subsection*{Step 1: Decomposition of the variables} As explained in the paragraph before ~(\ref{h-inv decomp after linear transfn}),
by relabeling the variables if necessary we have
$$
f = (x_1 + \ell_1) v'_1 + \cdots + (x_M + \ell_M) v'_M + u'_{M+1}v'_{M+1} + \cdots + u'_h v'_h,
$$
where each $\ell_i$ is a linear form in $\mathbb{Q}[x_{M+1}, \ldots, x_n] \ (1 \leq i \leq M)$,
and each $u'_{i'}$ and $v'_j$ are rational forms of positive degree $(M+1 \leq i' \leq h, 1 \leq j \leq h)$.
We can then find a monomial $x_{i_1} x_{i_2} \cdots x_{i_d}$, where $M < i_1 \leq i_2 \leq \cdots \leq i_d$, of $f$ with a non-zero coefficient.
This is the case, for otherwise it means that every monomial of $f$ is divisible by one of $x_1, \ldots, x_M$, and consequently that
$h = h(f)\leq M$, which is a contradiction.
We denote the distinct variables of $\{ x_{i_1}, x_{i_2}, \ldots , x_{i_d} \} \subseteq \{x_{M+1}, \ldots, x_n \}$
by $\{w_1, \ldots, w_K \}$, and let $\mathbf{w} = (w_1, \ldots, w_K)$. Clearly, we have $K \leq d$.
We selected these $K$ variables for the purpose of applying Weyl differencing later.
We also label $\mathbf{y} = (x_1,\ldots, x_M) = (y_1, \ldots, y_M)$ for notational convenience,
let $\mathbf{z} = \{ x_{M+1}, \ldots, x_n \} \backslash \mathbf{w}$, and denote $\mathbf{z} = (z_1, \ldots, z_{n - M - K})$.
We note that each $\ell_i$ is a rational linear form only in the $\mathbf{w}$ and the $\mathbf{z}$ variables $(1 \leq i \leq h)$.

\subsection*{Step 2: Decomposition of $f(\mathbf{x})$} We define $g_M$ with respect to $f$ as in ~(\ref{def gM}).
By Lemma \ref{h ineq2}, we have
\begin{equation}
\label{main prop eqn 2 - 2}
{f}(\mathbf{x}) = f( \mathbf{w}, \mathbf{y}, \mathbf{z}) = g_M( \mathbf{w}, \mathbf{y}, \mathbf{z} ) + {f} (\mathbf{w}, (- \ell_1, \ldots, -\ell_M), \mathbf{z}),
\end{equation}
where
\begin{equation}
\label{main prop eqn 1 - 2}
h_{  }(g_M( \mathbf{w}, \mathbf{y}, \mathbf{z} ) ) \geq M \ \ \text{ and } \ \ h_{  } ({f} (\mathbf{w}, (- \ell_1, \ldots, -\ell_M), \mathbf{z})) = h-M.
\end{equation}
We then have
\begin{equation}
\label{gM is 0'}
{f}(\mathbf{0}, \mathbf{y}, \mathbf{z}) = g_M( \mathbf{0}, \mathbf{y}, \mathbf{z} ) + {f} (\mathbf{0}, (- \ell_1 |_{\mathbf{w} = \mathbf{0}}, \ldots, -\ell_M |_{\mathbf{w} = \mathbf{0}}),  \mathbf{z}).
\end{equation}
Let us denote
$$
f_M(\mathbf{z} ) = {f} (\mathbf{0}, (- \ell_1 |_{\mathbf{w} = \mathbf{0}}, \ldots, -\ell_M |_{\mathbf{w} = \mathbf{0}}), \mathbf{z}).
$$
Consequently, we obtain from Lemma \ref{h ineq 1} and ~(\ref{main prop eqn 1 - 2}) that
\begin{equation}
\label{h eqn 1}
h_{  }(g_M( \mathbf{0}, \mathbf{y}, \mathbf{z} ) ) \geq M-K \geq M-d,
\end{equation}
and
\begin{equation}
\label{h eqn 2}
h ( f_M(\mathbf{z} ) ) \geq h - M - K \geq h- M- d.
\end{equation}

\subsection*{Step 3: Decomposition of $b(\mathbf{x})$ with respect to $\mathbf{w}$, $\mathbf{y}$, and $\mathbf{z}$} Let
$$
b_M(\mathbf{z}) = {b}(\mathbf{0}, (- \ell_1 |_{\mathbf{w} = \mathbf{0}}, \ldots, -\ell_M |_{\mathbf{w} = \mathbf{0}}), \mathbf{z} ).
$$
It is clear that the degree $d$ portion of the polynomial ${b}(\mathbf{0}, \mathbf{y}, \mathbf{0})$
is $g_M(\mathbf{0}, \mathbf{y}, \mathbf{0})$.
Let use denote
\begin{eqnarray}
\label{big equation in decomposition1}
&&{b}(\mathbf{0}, \mathbf{y}, \mathbf{z}) - b_M(\mathbf{z})
\\
&=& \sum_{j=1}^{d-1} \ \sum_{1 \leq t_1 \leq \cdots \leq t_{j} \leq M}
\left( \sum_{k=0}^{d-j} \Psi^{(k)}_{t_1, \ldots, t_{j}} ( \mathbf{z} )  \right) y_{t_1} \cdots y_{t_{j}}
+ \left( \sum_{k=1}^{d} \Psi^{(k)}_{\emptyset} ( \mathbf{z} ) \right)  + g_M(\mathbf{0}, \mathbf{y}, \mathbf{0}),
\notag
\end{eqnarray}
where $\Psi^{(k)}_{t_1, \ldots, t_{j}} ( \mathbf{z} )$ and
$\Psi^{(k)}_{\emptyset} ( \mathbf{z} )$ are forms of degree $k$.
With these notations, we have the following decomposition,
\begin{eqnarray}
\label{decomp of b 1}
&&{b}(\mathbf{w}, \mathbf{y}, \mathbf{z})
\\
&=&
{b}(\mathbf{w}, \mathbf{0}, \mathbf{0})
+ \sum_{j=1}^{d-1}  \ \sum_{1 \leq i_1 \leq \cdots \leq i_j \leq K}
\left( \sum_{k=1}^{d-j} \Phi^{(k)}_{i_1, \ldots, i_{j}} ( \mathbf{y}, \mathbf{z} )  \right) w_{i_1} \cdots w_{i_j}
\notag
\\
&+&
\sum_{j=1}^{d-1} \ \sum_{1 \leq t_1 \leq \cdots \leq t_{j} \leq M}
\left( \sum_{k=0}^{d-j} \Psi^{(k)}_{t_1, \ldots, t_{j}} ( \mathbf{z} )  \right) y_{t_1} \cdots y_{t_{j}}
+ \left( \sum_{k=1}^{d} \Psi^{(k)}_{\emptyset} ( \mathbf{z} ) \right)  + g_M(\mathbf{0}, \mathbf{y}, \mathbf{0})
\notag
\\
&+& b_M(\mathbf{z}) - b(\mathbf{0}, \mathbf{0},\mathbf{0}),
\notag
\end{eqnarray}
which we describe below.
We note that $\Phi^{(k)}_{i_1, \ldots, i_{j}} ( \mathbf{y}, \mathbf{z} )$
are forms of degree $k$.
The above decomposition establishes the following. The term
$$
{b}(\mathbf{w}, \mathbf{0}, \mathbf{0})
+ \sum_{j=1}^{d-1}  \ \sum_{1 \leq i_1 \leq \cdots \leq i_j \leq K}
\left( \sum_{k=1}^{d-j} \Phi^{(k)}_{i_1, \ldots, i_{j}} ( \mathbf{y}, \mathbf{z} )  \right) w_{i_1} \cdots w_{i_j}
$$
consists of all the monomials of $b(\mathbf{x})$, which involve any variables of $\mathbf{w}$.
Consequently, we have
\begin{eqnarray}
{b}(\mathbf{0}, \mathbf{y}, \mathbf{z})
&=&
\sum_{j=1}^{d-1} \ \sum_{1 \leq t_1 \leq \cdots \leq t_{j} \leq M}
\left( \sum_{k=0}^{d-j} \Psi^{(k)}_{t_1, \ldots, t_{j}} ( \mathbf{z} )  \right) y_{t_1} \cdots y_{t_{j}}
+ \left( \sum_{k=1}^{d} \Psi^{(k)}_{\emptyset} ( \mathbf{z} ) \right) + g_M( \mathbf{0}, \mathbf{y}, \mathbf{0} )
\notag
\\
&+& b_M(\mathbf{z}),
\notag
\end{eqnarray}
and the degree $d$ portion of $b( \mathbf{0}, \mathbf{y}, \mathbf{z}) = f( \mathbf{0}, \mathbf{y}, \mathbf{z})$.
Clearly, the degree $d$ portion of $b_M(\mathbf{z})$ is
$$
f_M(\mathbf{z}) = {f} (\mathbf{0}, (- \ell_1 |_{\mathbf{w} = \mathbf{0}}, \ldots, -\ell_M |_{\mathbf{w} = \mathbf{0}}),  \mathbf{z}).
$$
It then follows from ~(\ref{gM is 0'}) and ~(\ref{big equation in decomposition1}) that the degree $d$ portion of
$$
\sum_{j=1}^{d-1} \ \sum_{1 \leq t_1 \leq \cdots \leq t_{j} \leq M}
\left( \sum_{k=0}^{d-j} \Psi^{(k)}_{t_1, \ldots, t_{j}} ( \mathbf{z} )  \right) y_{t_1} \cdots y_{t_{j}}
+ \left( \sum_{k=1}^{d} \Psi^{(k)}_{\emptyset} ( \mathbf{z} ) \right)
+ g_M( \mathbf{0}, \mathbf{y}, \mathbf{0} )
$$
is
$$
g_M( \mathbf{0}, \mathbf{y}, \mathbf{z} ) =  \sum_{j=1}^{d-1} \ \sum_{1 \leq t_1 \leq \cdots \leq t_{j} \leq M}
 \Psi^{(d-j)}_{t_1, \ldots, t_{j}} ( \mathbf{z} ) \ y_{t_1} \cdots y_{t_{j}}
+ \Psi^{(d)}_{\emptyset} ( \mathbf{z} )
+ g_M( \mathbf{0}, \mathbf{y}, \mathbf{0} ).
$$
We also know from ~(\ref{gM is 0'}) that $g_M( \mathbf{0}, (- \ell_1 |_{\mathbf{w} = \mathbf{0}}, \ldots, -\ell_M |_{\mathbf{w} = \mathbf{0}}), \mathbf{z} ) = 0$,
and consequently,
\begin{eqnarray}
\Psi^{(d)}_{\emptyset} ( \mathbf{z} )
&=&
 \left. \left( -\sum_{j=1}^{d-1} \ \sum_{1 \leq t_1 \leq \cdots \leq t_{j} \leq M} \Psi^{(d-j)}_{t_1, \ldots, t_{j}} ( \mathbf{z} ) \ y_{t_1} \cdots y_{t_{j}}
\right) \right|_{y_i = - \ell_i |_{\mathbf{w} = \mathbf{0} \ (1 \leq i \leq M)} }
\notag
\\
&&
\notag
\\
&-& g_M( \mathbf{0}, (- \ell_1 |_{\mathbf{w} = \mathbf{0}}, \ldots, -\ell_M |_{\mathbf{w} = \mathbf{0}}), \mathbf{0} ).
\notag
\end{eqnarray}
In other words, $\Psi^{(d)}_{\emptyset} ( \mathbf{z} ) $ can be expressed as a rational polynomial
in the forms $ \{ \Psi^{(d-j)}_{t_1, \ldots, t_{j}} ( \mathbf{z} ) :  1 \leq j \leq d-1, 1 \leq t_1 \leq \cdots \leq t_{j} \leq M \} \cup \{ \ell_i |_{\mathbf{w} = \mathbf{0}} : 1 \leq i \leq M \}$.

\subsection*{Step 4: Regularization of systems $\Phi$ and $\Psi$} We denote by $\Phi = \{ \Phi^{(k)}_{i_1, \ldots, i_{j}} : 1 \leq j \leq d-1, 1 \leq i_1 \leq \cdots \leq i_j \leq K, 1 \leq k \leq d-j \}$.
Note every polynomial of $\Phi$ has degree strictly less than $d$, and involves only the $\mathbf{y}$ and the $\mathbf{z}$ variables.
Clearly, we have $|\Phi| \leq d^2 K^d \leq d^{d+2}$. We apply Proposition \ref{prop reg par} to the system $\Phi$ with respect
to the functions $\boldsymbol{\mathcal{F}} = \{\mathcal{F}_2, \ldots, \mathcal{F}_{d-1} \}$, where $\mathcal{F}_i(t) = {\rho}_{d,i} (2 + 2t) + 2t$ for $2 \leq i \leq d-1$, and obtain
$\mathcal{R}(\Phi) = ( \mathbf{a}^{(d-1)}, \ldots, \mathbf{a}^{(1)} )$.
For each form $a^{({s})}_i \in \mathbf{a}^{({s})} \ (1 \leq s \leq d-1, 1 \leq i \leq | \mathbf{a}^{({s})}|)$, we write
\begin{equation}
\label{defn of a's}
a^{({s})}_i(\mathbf{y}, \mathbf{z}) = \sum_{k=0}^{s} \sum_{1 \leq i_1 \leq \cdots \leq i_k \leq M} \widetilde{\Psi}^{(s - k)}_{{s} :i: i_1, \ldots, i_k}(\mathbf{z}) y_{i_1} \cdots y_{i_k},
\end{equation}
where each
$\widetilde{\Psi}^{(s - k)}_{{s} :i: i_1, \ldots, i_k}(\mathbf{z})$ is a form of degree $s - k$.
Thus each form $a^{({s})}_i$ introduces at most $({s}+1) M^{s} \leq d M^d$ forms in $\mathbf{z}$.
Also for each $1 \leq i \leq d-1$, we denote $\overline{\mathbf{a}}^{(i)}$
to be the system obtained by removing all forms that depend only on the $\mathbf{z}$ variables from $\mathbf{a}^{(i)}$.
Let $\overline{\mathcal{R}}(\Phi)= (\overline{\mathbf{a}}^{(d-1)}, \ldots, \overline{\mathbf{a}}^{(1)} )$, $R_2 = \sum_{i = 1}^{d-1} \ | \overline{\mathbf{a}}^{(i)} |$, and $D_2 = \sum_{i = 1}^{d-1} i \ | \overline{\mathbf{a}}^{(i)} |$. By relabeling if necessary, we denote the
elements of $\overline{\mathbf{a}}^{(s)}$ by $\overline{\mathbf{a}}^{(s)} = \{ a^{(s)}_i : 1 \leq i \leq | \overline{\mathbf{a}}^{(s)} | \}$
for each $1 \leq s \leq d-1$.

Let
\begin{eqnarray}
\Psi &=& \{  \Psi^{(k)}_{t_1, \ldots, t_{j}} (\mathbf{z}): 1 \leq j \leq d-1, 1 \leq t_1 \leq \cdots \leq t_{j} \leq M ,   0 \leq k \leq d-j  \}
\notag
\\
&\cup&  \  \{ {\Psi}^{(k)}_{\emptyset}(\mathbf{z}) : 1 \leq k < d \}
\notag
\\
&\cup&
\notag
\{ \ell_i |_{\mathbf{w} = \mathbf{0}} : 1 \leq i \leq M \}
\\
&\cup&  \  \{ \widetilde{\Psi}^{(s - k)}_{{s} :i: i_1, \ldots, i_k}(\mathbf{z}) :1 \leq {s} \leq d-1, 1 \leq i \leq |\mathbf{a}^{(s)}|, 1 \leq k \leq {s}, 1 \leq i_1 \leq \cdots \leq i_k \leq M  \}.
\notag
\end{eqnarray}
In other words, $\Psi$ is the collection of $\ell_i |_{\mathbf{w} = \mathbf{0}}$, and all $ \Psi^{(k)}_{t_1, \ldots, t_{j}} (\mathbf{z})$,
$\widetilde{\Psi}^{(s - k)}_{{s} :i: i_1, \ldots, i_k}(\mathbf{z})$, and
${\Psi}^{(k)}_{\emptyset}(\mathbf{z})$ except ${\Psi}^{(d)}_{\emptyset}(\mathbf{z})$. In particular, every polynomial of $\Psi$ has degree strictly less than $d$.
We can see that
$$
|\Psi| \leq d^2 M^d + d + M + |\mathcal{R}(\Phi) | d M^d.
$$
We let $\mathcal{R}(\Psi)$ be a regularization of
$\Psi$ with respect
to the functions $\boldsymbol{\mathcal{F}} = \{\mathcal{F}_2, \ldots, \mathcal{F}_{d-1}\}$, where again $\mathcal{F}_i(t) = {\rho}_{d,i} (2 + 2 t) + 2t$ for $2 \leq i \leq d-1$.
Let us denote $\mathcal{R}(\Psi) = (\mathbf{v}^{(d-1)}, \ldots, \mathbf{v}^{(1)} )$, $R_1 = \sum_{i = 1}^{d-1} \ | \mathbf{v}^{(i)} |$, and $D_1 = \sum_{i = 1}^{d-1} i \ | \mathbf{v}^{(i)} |$.

Let $\mathcal{R}^{(i)}(\Phi)$, $\Phi^{(i)}$, and $\mathcal{R}^{(i)}(\Psi)$ denote the degree $i$ forms of $\mathcal{R}^{}(\Phi)$, $\Phi$, and $\mathcal{R}^{}(\Psi)$, respectively.
From Proposition \ref{prop reg par}, we know that each $|\mathcal{R}^{(i)}(\Phi)| = |\mathbf{a}^{(i)}| \ (1 \leq i \leq d-1)$, and consequently $R_2$, is bounded by some constant dependent only on $\boldsymbol{\mathcal{F}}$, and $|\Phi^{(d-1)}|$, \ldots, $|\Phi^{(1)}|$. Thus we see that $R_2$ is bounded by a constant dependent only
on $d$.
We set
$$
M =  \rho_{d,d}( 2 + 2 R_2) + 2 R_2  + d,
$$
and note that $M$ is bounded by a constant dependent only on $d$.
By Proposition \ref{prop reg par} again,
we have that each $|\mathcal{R}^{(i)}(\Psi)| = |\mathbf{v}^{(i)}| \ (1 \leq i \leq d-1)$, and consequently $R_1$, is bounded by some constant dependent only
on $d$, $\boldsymbol{\mathcal{F}}$, $M$, and $|\Phi^{(d-1)}|$, \ldots, $|\Phi^{(1)}|$.
Thus $R_1$ is bounded by a constant dependent only on $d$ as well. \\

We define
\begin{equation}
\label{def of Ad}
A_d =  \max\{ 2 \rho_{d,d}( 2 + 2 R_1) + 4 R_1 + 2d , \ 2 \rho_{d,d}( 2 + 2 R_2) + 4 R_2 + 2d , \ \frac{5 \cdot 2^{d-1} \cdot (d-1) \cdot d!}{(\log 2)^d} + 5d\},
\end{equation}
and suppose $h^{\star}(f) \geq A_d$. We note that the third term inside the maximum function above is not required in this section, but
this lower bound on $A_d$ becomes necessary in Section \ref{section major arcs}.
With this choice of $A_d$, we have from ~(\ref{h eqn 1}) and ~(\ref{h eqn 2}) that
\begin{equation}
\label{cond on h and M}
h( f_M( \mathbf{z} ) ) \geq h - M - d \geq \rho_{d,d}( 2 + 2 R_1) + 2 R_1,
\end{equation}
and
\begin{equation}
\label{cond on h and M'}
h( g_M(\mathbf{0}, \mathbf{y}, \mathbf{z} ) ) \geq M - d \geq  \rho_{d,d}( 2 + 2 R_2) + 2 R_2.
\end{equation}

\subsection*{Step 5: Definition of the level sets $Z(\mathbf{H})$ and $Y(\mathbf{G};\mathbf{H})$}
For each $\mathbf{H} \in \mathbb{Z}^{R_1}$, we define the following set
$$
Z(\mathbf{H}) = \{ \mathbf{z} \in [0, N]^{n-M-K} \cap \mathbb{Z}^{n -M -K} : \mathcal{R}(\Psi) (\mathbf{z}) = \mathbf{H} \}.
$$
By Proposition \ref{prop reg par}, we know that each of the polynomials $\Psi^{(k)}_{t_1, \ldots, t_{j}} (\mathbf{z})$ and ${\Psi}^{(k)}_{\emptyset}(\mathbf{z})$ in ~(\ref{decomp of b 1}) can be expressed as a rational polynomial in the forms of $\mathcal{R}(\Psi)$. Let us denote
$$
\Psi^{(k)}_{t_1, \ldots, t_{j}} (\mathbf{z}) = {\hat{c} }^{(k)}_{ t_1, \ldots, t_{j}} ( \mathcal{R}(\Psi) ) \ \  \mbox{  and  } \  \
{\Psi}^{(k)}_{\emptyset} (\mathbf{z}) =  {\hat{c}}^{(k)}_{\emptyset} ( \mathcal{R}(\Psi) ),
$$
where ${\hat{c} }^{(k)}_{ t_1, \ldots, t_{j}} $ and  ${\hat{c}}^{(k)}_{\emptyset}$ are rational polynomials in $R_1$ variables.
Therefore, for any $\mathbf{z}_0 \in Z(\mathbf{H})$, we have
$$
\Psi^{(k)}_{t_1, \ldots, t_{j}}(\mathbf{z}_0) = {\hat{c} }^{(k)}_{ t_1, \ldots, t_{j}} ( \mathbf{H} ) \ \  \mbox{  and  } \  \
{\Psi}^{(k)}_{\emptyset}(\mathbf{z}_0) =  {\hat{c}}^{(k)}_{\emptyset} ( \mathbf{H} ).
$$

Since each of the forms $\widetilde{\Psi}^{(s - k)}_{{s} :i: i_1, \ldots, i_k}(\mathbf{z})$ in ~(\ref{defn of a's}) can be expressed as a rational polynomial
in the forms of $\mathcal{R}(\Psi)$, let us denote
$$
\widetilde{\Psi}^{(s - k)}_{{s} :i: i_1, \ldots, i_k}(\mathbf{z}) = \widetilde{c}^{(s - k)}_{{s} :i: i_1, \ldots, i_k}(\mathcal{R}(\Psi)),
$$
where each $\widetilde{c}^{(s - k)}_{{s} :i: i_1, \ldots, i_k}$ is a rational polynomial in $R_1$ variables.
Therefore, for each $a^{({s})}_i \in \mathcal{R}(\Phi) = ( \mathbf{a}^{(d-1)}, \ldots, \mathbf{a}^{(1)} )$,
where $1 \leq s \leq d-1$ and $1 \leq i \leq |\mathbf{a}^{(s)}|$,
we can write
\begin{equation}
\label{def of a's 2}
a^{({s})}_i(\mathbf{y}, \mathbf{z}) = \sum_{k=0}^{s} \sum_{1 \leq i_1 \leq \cdots \leq i_k \leq M} \widetilde{c}^{(s - k)}_{{s} :i: i_1, \ldots, i_k}(\mathcal{R}(\Psi)) y_{i_1} \cdots y_{i_k}.
\end{equation}
Consequently, we can define the following polynomial for each $1 \leq s \leq d-1$ and $1 \leq i \leq |\mathbf{a}^{(s)}|$,
\begin{equation}
\label{def of a's 3}
a^{({s})}_i(\mathbf{y}, Z(\mathbf{H})  ) = \sum_{k=0}^{s} \sum_{1 \leq i_1 \leq \cdots \leq i_k \leq M} \widetilde{c}^{(s - k)}_{{s} :i: i_1, \ldots, i_k}(\mathbf{H} ) y_{i_1} \cdots y_{i_k},
\end{equation}
so that given any $\mathbf{z}_0 \in Z(H)$, we have
$$
a^{({s})}_i(\mathbf{y}, \mathbf{z}_0  ) = a^{({s})}_i(\mathbf{y}, Z(\mathbf{H}) ).
$$
We also define
$$
\overline{R}(\Phi)(\mathbf{y}, Z(\mathbf{H}) ) = \{ a^{({s})}_i(\mathbf{y}, Z(\mathbf{H})  ) :
1 \leq s \leq d-1, 1 \leq i \leq |\overline{\mathbf{a}}^{(s)}|
\},
$$
which consists of $R_2$ polynomials with possible repetitions.
For each $\mathbf{G} \in \mathbb{Z}^{R_2}$, we let
$$
Y(\mathbf{G};\mathbf{H}) = \{ \mathbf{y} \in [0, N]^{ M } \cap \mathbb{Z}^{M} : \overline{\mathcal{R}}(\Phi) (\mathbf{y}, Z(\mathbf{H}) ) = \mathbf{G} \}.
$$

\subsection*{Step 6: Decomposition of $b(\mathbf{w}, \mathbf{y}, \mathbf{z})$ when $(\mathbf{y}, \mathbf{z}) \in  Y(\mathbf{G};\mathbf{H}) \times Z(\mathbf{H})$}
Recall $\Phi$ is the collection of all $\Phi^{(k)}_{i_1, \ldots, i_{j}} ( \mathbf{y}, \mathbf{z} )$ in ~(\ref{decomp of b 1}), and
that each $\Phi^{(k)}_{i_1, \ldots, i_{j}} ( \mathbf{y}, \mathbf{z} )$ can be expressed as
a rational polynomial in the forms of $\mathcal{R}(\Phi)$.
Thus, it follows from this fact and ~(\ref{def of a's 3}) that each $\Phi^{(k)}_{i_1, \ldots, i_{j}} ( \mathbf{y}, \mathbf{z} )$
is constant on $(\mathbf{y}, \mathbf{z}) \in Y(\mathbf{G};\mathbf{H}) \times Z(\mathbf{H})$, and we denote this constant value by
${c}^{(k)}_{i_1, \ldots, i_{j}} ( \mathbf{G}, \mathbf{H} )$.
Therefore, for any choice of $\mathbf{z} \in Z(\mathbf{H})$ and $ \mathbf{y} \in  Y(\mathbf{G};\mathbf{H})$, the polynomial ${b}(\mathbf{x})$ takes the following
shape
\begin{eqnarray}
&&{b}( \mathbf{w}, \mathbf{y}, \mathbf{z} )
\label{decom of b with coeff}
\\
&=&
{b}( \mathbf{w}, \mathbf{0}, \mathbf{0})
+ \sum_{j=1}^{d-1}  \ \sum_{1 \leq i_1 \leq \cdots \leq i_j \leq K}
\left( \sum_{k=1}^{d-j} {c}^{(k)}_{i_1, \ldots, i_{j}} ( \mathbf{G}, \mathbf{H} )  \right) w_{i_1} \cdots w_{i_j}
\notag
\\
&+&
\sum_{j=1}^{d-1} \ \sum_{1 \leq t_1 \leq \cdots \leq t_{j} \leq M}
\left( \sum_{k=0}^{d-j} {\hat{c} }^{(k)}_{ t_1, \ldots, t_{j}} ( \mathbf{H} )  \right) y_{t_1} \cdots y_{t_{j}}
+
\left( \sum_{k=1}^{d} {\hat{c}}^{(k)}_{\emptyset} ( \mathbf{H} )  \right)
+ g_M( \mathbf{0}, \mathbf{y}, \mathbf{0} )
\notag
\\
&+& b_M(\mathbf{z}) - b(\mathbf{0}, \mathbf{0}, \mathbf{0}).
\notag
\end{eqnarray}

We label
\begin{eqnarray}
\mathfrak{C}_0 (\mathbf{w}, \mathbf{G}, \mathbf{H}  ) &=& {b}(\mathbf{w}, \mathbf{0}, \mathbf{0} )
+ \sum_{j=1}^{d-1}  \ \sum_{1 \leq i_1 \leq \cdots \leq i_j \leq K}
\left( \sum_{k=1}^{d-j} {c}^{(k)}_{i_1, \ldots, i_{j}} ( \mathbf{G}, \mathbf{H} )  \right) w_{i_1} \cdots w_{i_j},
\notag
\end{eqnarray}
and
$$
\mathfrak{C}_1 (\mathbf{y}, \mathbf{H} ) = \sum_{j=1}^{d-1} \ \sum_{1 \leq t_1 \leq \cdots \leq t_{j} \leq M}
\left( \sum_{k=0}^{d-j} {\hat{c}}^{(k)}_{ t_1, \ldots, t_{j}} ( \mathbf{H} )  \right) y_{t_1} \cdots y_{t_{j}}
+
\left( \sum_{k=1}^{d} {\hat{c}}^{(k)}_{\emptyset} ( \mathbf{H} )  \right)
+ g_M( \mathbf{0}, \mathbf{y}, \mathbf{0} ),
$$
so that for $\mathbf{z} \in Z(\mathbf{H})$ and $ \mathbf{y} \in  Y(\mathbf{G};\mathbf{H})$, we have
\begin{equation}
\label{decom of b last}
{b}(\mathbf{w}, \mathbf{y}, \mathbf{z}  ) =  \mathfrak{C}_0 (\mathbf{w}, \mathbf{G}, \mathbf{H}  ) + \mathfrak{C}_1 (\mathbf{y}, \mathbf{H} ) +
b_M(\mathbf{z}) - b(\mathbf{0}, \mathbf{0}, \mathbf{0}).
\end{equation}

\subsection*{Proof of Proposition \ref{prop minor arc bound}}
We are now in position to prove Proposition \ref{prop minor arc bound}.
\begin{proof} 
Using the notations above we define the following three exponential sums,
$$
S_0( {\alpha}, \mathbf{G}, \mathbf{H} ) = \sum_{ \mathbf{w} \in [0,N]^K \cap \mathbb{Z}^K } \Lambda (\mathbf{w})
\ e(  {\alpha} \cdot \mathfrak{C}_0 (\mathbf{w}, \mathbf{G}, \mathbf{H}  )  ),
$$
$$
S_1( {\alpha}, \mathbf{G}, \mathbf{H}) = \sum_{\mathbf{y} \in Y(\mathbf{G};\mathbf{H})} \Lambda (\mathbf{y}) \  e( {\alpha} \cdot \mathfrak{C}_1 (\mathbf{y}, \mathbf{H}  )  ),
$$
and
$$
S_2( {\alpha}, \mathbf{H} ) = \sum_{\mathbf{z} \in Z(\mathbf{H}) } \Lambda (\mathbf{z}) \ e(  {\alpha} \cdot   b_M(\mathbf{z}) - {\alpha} \cdot b(\mathbf{0}, \mathbf{0}, \mathbf{0}) ).
$$

Let
\begin{eqnarray}
\mathcal{L}_1(N) 
= \{ \mathbf{H} \in \mathbb{Z}^{R_1} : Z(\mathbf{H}) \not = \emptyset \},
\notag
\end{eqnarray}
and for each $\mathbf{H} \in \mathcal{L}_1(N)$,  let
\begin{eqnarray}
\mathcal{L}_2(N ; \mathbf{H}) 
= \{ \mathbf{G} \in \mathbb{Z}^{R_2} : Y(\mathbf{G}, \mathbf{H} ) \not = \emptyset \}.
\notag
\end{eqnarray}
It then follows that
\begin{equation}
\label{ineq 1}
|\mathcal{L}_1(N)| \ll N^{D_1} \  \mbox{ and } \  |\mathcal{L}_2(N; \mathbf{H})| \ll N^{D_2},
\end{equation}
where the implicit constant in the second inequality is independent of $\mathbf{H}$.
In order to prove the first inequality, let $C_0$ be the largest absolute value of all coefficients of the polynomials in $\mathcal{R}(\Psi)$.
Also let $M_0$ be the largest number of monomials with non-zero coefficients in any of the polynomials in $\mathcal{R}(\Psi)$.
Then we have
$$
|\mathcal{L}_1(N)| \leq  (2 C_0 \cdot M_0)^{R_1} \cdot  (N+1)^{D_1}.
$$
To see the second inequality, we let $C'_0$ be the largest absolute value of all coefficients of the polynomials
$a_i^{(s)} (\mathbf{y}, \mathbf{z})$ in $\overline{\mathcal{R}}(\Phi)$, 
and let $M'_0$ be the largest number of monomials with non-zero coefficients in any of these polynomials.
Then we see that the number of values taken by $a_i^{(s)} (\mathbf{y}, \mathbf{z})$ as $(\mathbf{y}, \mathbf{z})$ varies in $[0, N]^{n-K}$ is
$$
\leq (2 C_0' \cdot M_0') \cdot (N+1)^{s}.
$$
Therefore, we have
\begin{eqnarray}
\mathcal{L}_2(N ; \mathbf{H})
&=& \{ \mathbf{G} \in \mathbb{Z}^{R_2} : Y(\mathbf{G}, \mathbf{H} ) \not = \emptyset \}
\notag
\\
&=& \{ \mathbf{G} \in \mathbb{Z}^{R_2} : \exists  \mathbf{y} \in [0, N]^{ M } \cap \mathbb{Z}^{M}, \overline{\mathcal{R}}(\Phi) (\mathbf{y}, Z(\mathbf{H}) ) = \mathbf{G}  \}
\notag
\\
&\subseteq &
\{ \mathbf{G} \in \mathbb{Z}^{R_2} : \exists (\mathbf{y}, \mathbf{z}) \in [0, N]^{ n- K  } \cap \mathbb{Z}^{n - K},  \overline{\mathcal{R}}(\Phi) (\mathbf{y}, \mathbf{z} ) = \mathbf{G} \},
\notag
\end{eqnarray}
and the cardinality of the last set is
$$
\leq (2 C_0' \cdot M_0')^{R_2} \cdot (N+1)^{D_2}.
$$

By the Cauchy-Schwarz inequality and (\ref{ineq 1}), we obtain
\begin{eqnarray}
\label{minor arc ineq 1}
&& \Big{|} \int_{\mathfrak{m}(C)} T({b}; {\alpha} ) \ {d} {\alpha}  \Big{|}^2
\\
&\leq&
\Big{|}
\sum_{\mathbf{H} \in \mathcal{L}_1(N)} \sum_{\mathbf{G} \in \mathcal{L}_2(N ; \mathbf{H})} \int_{\mathfrak{m}(C)} \
\sum_{\substack{ \mathbf{w} \in [0,N]^K \cap \mathbb{Z}^K \\ \mathbf{z} \in Z(\mathbf{H}) \\  \mathbf{y} \in Y(\mathbf{G}; \mathbf{H}) }}
\Lambda(\mathbf{w}) \Lambda(\mathbf{y}) \Lambda(\mathbf{z}) \cdot
\notag
\\
&&\phantom{12345612345}
e( {\alpha} \cdot (\mathfrak{C}_0 (\mathbf{w}, \mathbf{G}, \mathbf{H}  ) + \mathfrak{C}_1 (\mathbf{y}, \mathbf{H} ) + {b}_M( \mathbf{z} ) - b(\mathbf{0}, \mathbf{0}, \mathbf{0})  \  ))
\  {d} {\alpha}  \Big{|}^2
\notag
\\
&\ll&
N^{D_1+ D_2}
\sum_{\mathbf{H} \in \mathcal{L}_1(N)} \sum_{\mathbf{G} \in \mathcal{L}_2(N ; \mathbf{H})} \Big{|} \int_{\mathfrak{m}(C)}
   S_0 ( {\alpha}, \mathbf{G}, \mathbf{H} )
S_1( {\alpha}, \mathbf{G}, \mathbf{H} )S_2( {\alpha}, \mathbf{H} )
 \  {d}  {\alpha} \Big{|}^2
\notag
\\
&\ll&
N^{D_1+D_2} \  \left( \sup_{\substack {\mathbf{H} \in \mathcal{L}_1(N) \\ \mathbf{G} \in \mathcal{L}_2(N ; \mathbf{H}) }}  \sup_{\alpha \in \mathfrak{m}(C) } | S_0 (\alpha, \mathbf{G}, \mathbf{H} ) |^2 \right) \cdot
\notag
\\
&&\phantom{1223344556677889123456}  \sum_{\mathbf{H} \in \mathcal{L}_1(N)} \sum_{\mathbf{G} \in \mathcal{L}_2(N ; \mathbf{H})}   \|S_1(\cdot, \mathbf{G}, \mathbf{H} ) \|_2^2 \ \| S_2(\cdot, \mathbf{H} ) \|_2^2,
\notag
\end{eqnarray}
where $\| \cdot \|_2$ denotes the $L^2$-norm on $[0,1]$.
By the orthogonality relation, it follows that
\begin{eqnarray}
\notag
\|S_1(\cdot, \mathbf{G}, \mathbf{H} ) \|_2^2 \ \| S_2(\cdot, \mathbf{H} ) \|_2^2
&\leq&
(\log N)^{2n - 2K} \mathcal{N}_1 (\mathbf{G}; \mathbf{H}) \mathcal{N}_2 (\mathbf{H}),
\notag
\end{eqnarray}
where
$$\mathcal{N}_1 (\mathbf{G}; \mathbf{H}) =  | \{ (\mathbf{y}, \mathbf{y}' )  \in Y(\mathbf{G}; \mathbf{H}) \times Y(\mathbf{G}; \mathbf{H}): \mathfrak{C}_1 (\mathbf{y}, \mathbf{H}  ) = \mathfrak{C}_1 (\mathbf{y}', \mathbf{H}  )   \} |,$$
and
$$
\mathcal{N}_2 (\mathbf{H}) =  |\{ (\mathbf{z}, \mathbf{z}' )  \in Z(\mathbf{H}) \times Z(\mathbf{H}) : {b}_M( \mathbf{z} ) = {b}_M( \mathbf{z}' ) \}|.
$$

With these notations, we may further bound ~(\ref{minor arc ineq 1}) as follows
\begin{eqnarray}
\label{minor arc ineq 2}
\Big{|} \int_{\mathfrak{m}(C)} T({b}; {\alpha} ) \ {d} {\alpha}  \Big{|}^2
\ll
(\log N)^{2n - 2K}  N^{D_1+D_2} \ \left( \sup_{\substack {\mathbf{H} \in \mathcal{L}_1(N) \\ \mathbf{G} \in \mathcal{L}_2(N ; \mathbf{H}) }}  \sup_{\alpha \in \mathfrak{m}(C) } | S_0 (\alpha, \mathbf{G}, \mathbf{H} ) |^2 \right)
\ \mathcal{W},
\notag
\end{eqnarray}
where
$$
\mathcal{W} = \sum_{\mathbf{H} \in \mathcal{L}_1(N)} \sum_{\mathbf{G} \in \mathcal{L}_2(N; \mathbf{H})} \mathcal{N}_1 (\mathbf{G}; \mathbf{H})  \mathcal{N}_2 (\mathbf{H}).
$$
We can express $\mathcal{W}$ as the number of solutions $\mathbf{y}, \mathbf{y}'  \in [0,N]^M \cap \mathbb{Z}^M$
and $\mathbf{z}, \mathbf{z}' \in [0,N]^{n-M-K} \cap \mathbb{Z}^{n-M-K} $ of the system
\begin{eqnarray}
{\mathcal{R}}(\Psi) ( \mathbf{z}) &=& {\mathcal{R}}(\Psi) (\mathbf{z}') = \mathbf{H}
\label{first system}
\\
\overline{\mathcal{R}}(\Phi) (\mathbf{y},  Z(\mathbf{H})) &=& \overline{\mathcal{R}}(\Phi) (\mathbf{y}', Z(\mathbf{H}))= \mathbf{G}
\notag
\\
\mathfrak{C}_1 (\mathbf{y}, \mathbf{H}  )  &=&  \mathfrak{C}_1 (\mathbf{y}', \mathbf{H}  )
\notag
\\
{b}_M( \mathbf{z} ) &=& {b}_M( \mathbf{z}' )
\notag
\end{eqnarray}
for any $\mathbf{H}\in \mathcal{L}_1(N)$ and $\mathbf{G}\in \mathcal{L}_2(N; \mathbf{H})$.
We know that the system $\overline{\mathcal{R}}(\Phi) (\mathbf{y},  Z(\mathbf{H}))$ is identical to $\overline{\mathcal{R}}(\Phi) (\mathbf{y},  \mathbf{z}_0)$ for any choice of $\mathbf{z}_0 \in Z(\mathbf{H})$ and any $\mathbf{y} \in [0,N]^M \cap \mathbb{Z}^M$. Similarly, we know that the polynomial
$\mathfrak{C}_1 (\mathbf{y}, \mathbf{H})$ is identical to $b(\mathbf{0}, \mathbf{y}, \mathbf{z}_0) - b_M(\mathbf{z}_0)$ for
any choice of $\mathbf{z}_0 \in Z(\mathbf{H})$.
Therefore, since ${\mathcal{R}}(\Psi) (\mathbf{z}) = \mathbf{H}$
implies that $z \in Z(\mathbf{H})$, we can rearrange the system ~(\ref{first system}) and
deduce that $\mathcal{W}$ is the number of solutions $\mathbf{y}, \mathbf{y}'  \in [0,N]^M \cap \mathbb{Z}^M$
and $\mathbf{z}, \mathbf{z}' \in [0,N]^{n-M-K} \cap \mathbb{Z}^{n-M-K} $ of the following system
\begin{eqnarray}
\label{second system}
{\mathcal{R}}(\Psi) ( \mathbf{z}) &=& {\mathcal{R}}(\Psi) (\mathbf{z}')
\\
\overline{\mathcal{R}}(\Phi) (\mathbf{y},  \mathbf{z} ) &=& \overline{\mathcal{R}}(\Phi) (\mathbf{y}', \mathbf{z} )
\notag
\\
{b}(\mathbf{0}, \mathbf{y}, \mathbf{z}) - {b}_M (\mathbf{z} )  &=&  {b}(\mathbf{0}, \mathbf{y}', \mathbf{z}) - {b}_M (\mathbf{z} )
\notag
\\
{b}_M( \mathbf{z} ) &=& {b}_M( \mathbf{z}' ).
\notag
\end{eqnarray}

Our result follows from the following two claims.

Claim 1: Given any $c>0$, there exists $C > 0$ such that the following bound holds,
$$
\sup_{\substack {\mathbf{H} \in \mathcal{L}_1(N) \\ \mathbf{G} \in \mathcal{L}_2(N ; \mathbf{H}) }}  \sup_{\alpha \in \mathfrak{m}(C) } | S_0 (\alpha, \mathbf{G}, \mathbf{H} ) |
\ll \frac{N^K}{(\log N)^c}.
$$

Claim 2: We have the following bound on $\mathcal{W}$,
$$
\mathcal{W} \ll N^{2n - 2K - 2 d - D_1 - D_2}.
$$

By substituting the bounds from the above two claims into ~(\ref{minor arc ineq 2}), we obtain for any $c>0$
there exists $C>0$ such that
$$
\int_{\mathfrak{m}(C)} T({b}; {\alpha} ) \ {d} {\alpha}
\ll
\frac{N^{n - d} }{(\log N)^{c}},
$$
and this completes the proof of our proposition.
Therefore, we only need to establish Claims $1$ and $2$.
Claim $1$ is obtained via Weyl differencing. Since the set up for our Claim 1 is the same as that
of \cite{CM}, we omit the proof of Claim 1 and refer the reader to \cite[pp. 725]{CM}.

We now present the proof of Claim 2. From ~(\ref{second system}), we can write
$$
\mathcal{W} = \sum_{\mathbf{z} \in  [0,N]^{n-M-K} \cap \mathbb{Z}^{n-M-K}} T_1(\mathbf{z}) \cdot T_2(\mathbf{z}),
$$
where $T_1(\mathbf{z})$ is the number of solutions $\mathbf{y}, \mathbf{y}' \in [0,N]^{M} \cap \mathbb{Z}^{M}$
to the system
\begin{eqnarray}
{b}( \mathbf{0}, \mathbf{y}, \mathbf{z} ) &=&  {b}( \mathbf{0}, \mathbf{y}', \mathbf{z} )
\notag
\\
\overline{\mathcal{R}}(\Phi) (\mathbf{y}, \mathbf{z}) &=& \overline{\mathcal{R}}(\Phi) (\mathbf{y}', \mathbf{z}),
\notag
\end{eqnarray}
and $T_2(\mathbf{z})$ is the number of solutions $\mathbf{z}' \in  [0,N]^{n-M-K} \cap \mathbb{Z}^{n-M-K}$
to the system
\begin{eqnarray}
{b}_M( \mathbf{z} ) &=& {b}_M( \mathbf{z}' )
\notag
\\
{\mathcal{R}}(\Psi) ( \mathbf{z}) &=& {\mathcal{R}}(\Psi) (\mathbf{z}').
\notag
\end{eqnarray}
Define $\mathcal{W}_i =\sum_{\mathbf{z}} T_i (\mathbf{z})^2 \ (i=1, 2)$ so that
we have $\mathcal{W}^2 \leq \mathcal{W}_1 \mathcal{W}_2$ by the Cauchy-Schwarz inequality.
We first estimate $\mathcal{W}_1$, which
we can deduce to be the number of solutions $\mathbf{y}, \mathbf{y}', \mathbf{u}, \mathbf{u}' \in [0,N]^M \cap \mathbb{Z}^M$
and $\mathbf{z} \in [0,N]^{n-M-K} \cap \mathbb{Z}^{n-M-K}$ satisfying the equations
\begin{eqnarray}
\label{system 1 in claim 2}
{b}( \mathbf{0}, \mathbf{y}, \mathbf{z} )  -  {b}( \mathbf{0}, \mathbf{y}', \mathbf{z} ) &=& 0
\\
{b}( \mathbf{0}, \mathbf{u}, \mathbf{z} ) - {b}( \mathbf{0}, \mathbf{u}', \mathbf{z} )  &=&  0
\notag
\\
\overline{\mathcal{R}}(\Phi) (\mathbf{y}, \mathbf{z}) - \overline{\mathcal{R}}(\Phi) (\mathbf{y}', \mathbf{z}) &=&  0
\notag
\\
\overline{\mathcal{R}}(\Phi) (\mathbf{u}, \mathbf{z}) - \overline{\mathcal{R}}(\Phi) (\mathbf{u}', \mathbf{z}) &=&  0.
\notag
\end{eqnarray}
We consider the $h$-invariant of the system of forms on the left hand side of ~(\ref{system 1 in claim 2}), and show that
it is a regular system. The first two equations of ~(\ref{system 1 in claim 2}) are the degree $d$ polynomials of the system, and let $h_d$ be the $h$-invariant of these two polynomials.
Suppose for some $\lambda, \mu \in \mathbb{Q}$, not both $0$, we have
\begin{eqnarray}
\lambda \cdot ({f}( \mathbf{0}, \mathbf{y}, \mathbf{z} )  -  {f}( \mathbf{0}, \mathbf{y}', \mathbf{z} ) )
+ \mu \cdot ({f}( \mathbf{0}, \mathbf{u}, \mathbf{z} ) - {f}( \mathbf{0}, \mathbf{u}', \mathbf{z} ) )
= \sum_{j=1}^{h_d} U_j \cdot V_j,
\notag
\end{eqnarray}
where $U_j = U_j( \mathbf{y}, \mathbf{y}', \mathbf{u}, \mathbf{u}', \mathbf{z} )$ and $V_j =
V_j( \mathbf{y}, \mathbf{y}', \mathbf{u}, \mathbf{u}', \mathbf{z} )$ are rational forms of positive degree $(1 \leq j \leq h_d)$.
Without loss of generality, suppose $\lambda \not = 0$.
Let $\boldsymbol{\ell} = (- \ell_1 |_{\mathbf{w} = \mathbf{0}}, \ldots, - \ell_M |_{\mathbf{w} = \mathbf{0}} )$. If we set
$\mathbf{u} = \mathbf{u}' = \mathbf{y}' = \boldsymbol{\ell}$,
then the above equation becomes
\begin{eqnarray}
{g_M}( \mathbf{0}, \mathbf{y}, \mathbf{z} ) = {f}( \mathbf{0}, \mathbf{y}, \mathbf{z} )  -  {f}_M( \mathbf{z} )
= \frac{1}{\lambda} \sum_{j=1}^{h_d}  U_j (\mathbf{y}, \boldsymbol{\ell},\boldsymbol{\ell}, \boldsymbol{\ell}, \mathbf{z}) \cdot V_j (\mathbf{y}, \boldsymbol{\ell}, \boldsymbol{\ell},\boldsymbol{\ell}, \mathbf{z}).
\notag
\end{eqnarray}
Therefore, we obtain from ~(\ref{cond on h and M'})
\begin{eqnarray}
h_d \geq  h ({g_M}( \mathbf{0}, \mathbf{y}, \mathbf{z} ) )
\notag
\geq \rho_{d,d}( 2 + 2 R_2) + 2 R_2
\notag
\geq \rho_{d,d}( 2 + 2 R_2 - 2 | { \overline{\mathbf{a} }}^{(1)}|) + 2 | { \overline{\mathbf{a} }}^{(1)}|.
\notag
\end{eqnarray}

For each $1 \leq i \leq d-1$, denote by
$$
\overline{\mathcal{R}}(\Phi)^{(i)} (\mathbf{y}, \mathbf{z}) - \overline{\mathcal{R}}(\Phi)^{(i)} (\mathbf{y}', \mathbf{z})
=
\{ {a}_j^{(i)}(\mathbf{y}, \mathbf{z}) - {a}_j^{(i)}(\mathbf{y}', \mathbf{z}) : 1 \leq j \leq |\overline{\mathbf{a}}^{(i)}| \},
$$
the system of degree $i$ polynomials of $\overline{\mathcal{R}}(\Phi) (\mathbf{y}, \mathbf{z}) - \overline{\mathcal{R}}(\Phi) (\mathbf{y}', \mathbf{z})$.
We also define
$$
\overline{\mathcal{R}}(\Phi)^{(i)} (\mathbf{u}, \mathbf{z}) - \overline{\mathcal{R}}(\Phi)^{(i)} (\mathbf{u}', \mathbf{z})
$$
in a similar manner. We apply Lemma \ref{Lemma 2 in CM} to estimate the $h$-invariant of the degree $i$ forms of the system ~(\ref{system 1 in claim 2}) for each $2 \leq i \leq d-1$,
\begin{eqnarray}
&&
h \left( \overline{\mathcal{R}}(\Phi)^{(i)} (\mathbf{y}, \mathbf{z}) - \overline{\mathcal{R}}(\Phi)^{(i)} (\mathbf{y}', \mathbf{z}),  \overline{\mathcal{R}}(\Phi)^{(i)} (\mathbf{u}, \mathbf{z}) - \overline{\mathcal{R}}(\Phi)^{(i)} (\mathbf{u}', \mathbf{z} ) \right)
\notag
\\
&\geq &h \left( \overline{\mathcal{R}}(\Phi)^{(i)} (\mathbf{y}, \mathbf{z}) - \overline{\mathcal{R}}(\Phi)^{(i)} (\mathbf{y}', \mathbf{z}),  \overline{\mathcal{R}}(\Phi)^{(i)} (\mathbf{u}, \mathbf{z}) - \overline{\mathcal{R}}(\Phi)^{(i)} (\mathbf{u}', \mathbf{z} ) ; \mathbf{z} \right)
\notag
\\
&=&
h \left( \overline{\mathcal{R}}(\Phi)^{(i)} (\mathbf{y}, \mathbf{z}) - \overline{\mathcal{R}}(\Phi)^{(i)} (\mathbf{y}', \mathbf{z}) ; \mathbf{z}  \right)
\notag
\\
&\geq&
h \left( \overline{\mathcal{R}}(\Phi)^{(i)} (\mathbf{y}, \mathbf{z}) , \overline{\mathcal{R}}(\Phi)^{(i)} (\mathbf{y}', \mathbf{z}) ; \mathbf{z}  \right)
\notag
\\
&\geq&
h \left( \overline{\mathcal{R}}(\Phi)^{(i)} (\mathbf{y}, \mathbf{z}) ; \mathbf{z}  \right)
\notag
\\
&\geq&
\rho_{d,i}( 2 + 2 R_2 ) + 2 R_2
\notag
\\
&\geq&
\rho_{d,i}( 2 + 2 R_2 - 2| \overline{{ \mathbf{a} }}^{(1)}| ) + 2 | \overline{{ \mathbf{a} }}^{(1)}|.
\notag
\end{eqnarray}

We also have to show that the linear forms of the system ~(\ref{system 1 in claim 2}) are linearly independent over $\mathbb{Q}$.
Recall the linear forms of $\overline{\mathcal{R}}(\Phi)^{(1)} (\mathbf{y}, \mathbf{z})$ are linearly independent over $\mathbb{Q}$,
and do not include any linear forms that depend only on the $\mathbf{z}$ variables, and similarly for $\overline{\mathcal{R}}(\Phi)^{(1)} (\mathbf{y}', \mathbf{z})$, $\overline{\mathcal{R}}(\Phi)^{(1)} (\mathbf{u}, \mathbf{z})$, and $\overline{\mathcal{R}}(\Phi)^{(1)} (\mathbf{u}', \mathbf{z})$.
We leave it as a basic exercise for the reader to verify that the linear forms of
$$
\overline{\mathcal{R}}(\Phi)^{(1)} (\mathbf{y}, \mathbf{z}) - \overline{\mathcal{R}}(\Phi)^{(1)} (\mathbf{y}', \mathbf{z}) \
\bigcup
\ \overline{\mathcal{R}}(\Phi)^{(1)} (\mathbf{u}, \mathbf{z}) - \overline{\mathcal{R}}(\Phi)^{(1)} (\mathbf{u}', \mathbf{z})
$$
are linearly independent over $\mathbb{Q}$.

Therefore, it follows from Corollary \ref{cor Schmidt} that
$$
\mathcal{W}_1 \ll N^{n + 3M - K - (2 d + 2 D_2)}.
$$

We now estimate $\mathcal{W}_2$,
which we can deduce to be the number of solutions $\mathbf{z}, \mathbf{z}', \mathbf{z}'' \in [0,N]^{n-M-K } \cap \mathbb{Z}^{n-M-K}$
satisfying the equations
\begin{eqnarray}
\label{system 2 in claim 2}
{b}_M( \mathbf{z} ) - {b}_M( \mathbf{z}' ) &=& 0
\\
{b}_M( \mathbf{z} ) - {b}_M(  \mathbf{z}'' ) &=& 0
\notag
\\
{\mathcal{R}}(\Psi) ( \mathbf{z}) - {\mathcal{R}}(\Psi) (\mathbf{z}') &=& 0
\notag
\\
{\mathcal{R}}(\Psi) ( \mathbf{z}) - {\mathcal{R}}(\Psi) (\mathbf{z}'') &=& 0.
\notag
\end{eqnarray}
We consider the $h$-invariant of the system of forms on the left hand side of ~(\ref{system 2 in claim 2}), and show that
it is a regular system. The first two equations of ~(\ref{system 2 in claim 2})
are the degree $d$ polynomials of the system, and let $h_d$ be the $h$-invariant of these two polynomials.
Suppose for some $\lambda, \mu \in \mathbb{Q}$, not both $0$, we have
\begin{eqnarray}
\label{eqn f_M U V}
\lambda \cdot ( {f}_M ( \mathbf{z} ) - {f}_M (  \mathbf{z}' ) )
+ \mu \cdot ( {f}_M ( \mathbf{z} ) - {f}_M ( \mathbf{z}'' ) )
= \sum_{j=1}^{h_d} U_j \cdot V_j,
\notag
\end{eqnarray}
where $U_j = U_j( \mathbf{z}, \mathbf{z}', \mathbf{z}'' )$ and $V_j =
V_j( \mathbf{z}, \mathbf{z}', \mathbf{z}'' )$ are rational forms of positive degree $(1 \leq j \leq h_d)$.
We consider two cases, $(\lambda + \mu ) \not = 0$ and $(\lambda + \mu ) = 0$.
Suppose $(\lambda + \mu ) \not = 0$. If we set $\mathbf{z}' = \mathbf{z}'' = \mathbf{0}$, then the above equation becomes
\begin{eqnarray}
(\lambda + \mu ) \cdot  {f}_M( \mathbf{z} )
=  \sum_{j=1}^{h_d}  U_j (\mathbf{z}, \mathbf{0}, \mathbf{0} ) \cdot V_j (\mathbf{z}, \mathbf{0}, \mathbf{0} ).
\notag
\end{eqnarray}
Thus we obtain $h_d \geq h( {f}_M( \mathbf{z} ) ).$
On the other hand, suppose $(\lambda + \mu ) = 0$, then the above equation ~(\ref{eqn f_M U V}) simplifies to
$$
{f}_M(  \mathbf{z}' ) - {f}_M(  \mathbf{z}'' )
= \frac{-1}{\lambda} \sum_{j=1}^{h_d} U_j \cdot V_j.
$$
From this equation, 
we substitute $\mathbf{z}'' = \mathbf{0}$ to obtain $h_d \geq h( {f}_M(  \mathbf{z}' ) ).$
Therefore, in either case we obtain from ~(\ref{cond on h and M}) that
$$
h_d \geq h( {f}_M( \mathbf{z} ) ) \geq \rho_{d,d}( 2 + 2  R_1) + 2R_1 \geq \rho_{d,d}( 2 + 2  R_1 - 2 | { \mathbf{v} }^{(1)}|) + 2 | { \mathbf{v} }^{(1)}|.
$$
Recall we defined ${\mathcal{R}} (\Psi) = (\mathbf{v}^{(d-1)}, \ldots, \mathbf{v}^{(1)})$, where $\mathbf{v}^{(i)} = \mathcal{R}^{(i)}(\Psi)$ are the degree $i$ forms of
${\mathcal{R}} (\Psi) \ (1 \leq i \leq d-1)$. Take $2 \leq i \leq d-1$. Let $m_i = | \mathbf{v}^{(i)} |$, and we label the forms of $\mathbf{v}^{(i)}$ to be $v^{(i)}_1, \ldots,  v^{(i)}_{m_i}$. 
Let $h_i$ be the $h$-invariant of the degree $i$ forms of the system ~(\ref{system 2 in claim 2}).
Then for some $\boldsymbol{\lambda}, \boldsymbol{\mu} \in \mathbb{Q}^{m_i}$, not both $\mathbf{0}$, we have
\begin{equation}
\label{eqn f_M U V 1}
\sum_{j=1}^{m_i} \lambda_j \cdot ( v^{(i)}_j (\mathbf{z}) - v^{(i)}_j (\mathbf{z}' ) ) + \sum_{j=1}^{m_i} \mu_j \cdot ( v^{(i)}_j (\mathbf{z}) - v^{(i)}_j (\mathbf{z}'' ) )= \sum_{t=1}^{h_i} U_t \cdot V_t,
\end{equation}
where $U_t = U_t( \mathbf{z}, \mathbf{z}', \mathbf{z}'')$ and $V_t =
V_t( \mathbf{z}, \mathbf{z}', \mathbf{z}'')$ are forms of positive degree $(1 \leq t \leq h_i)$.
We consider two cases, $(\boldsymbol{\lambda} + \boldsymbol{\mu}) \not = \mathbf{0}$ and
$(\boldsymbol{\lambda} + \boldsymbol{\mu}) = \mathbf{0}$.

Suppose $(\boldsymbol{\lambda} + \boldsymbol{\mu}) \not = \mathbf{0}$. In this case, we set $\mathbf{z}' = \mathbf{z}'' = \mathbf{0}$, and equation ~(\ref{eqn f_M U V 1}) simplifies to
$$
\sum_{j=1}^{m_i} ( \lambda_j + \mu_j ) \cdot  v^{(i)}_j (\mathbf{z}) = \sum_{t=1}^{h_i} U_t( \mathbf{z}, \mathbf{0}, \mathbf{0}) \cdot  V_t ( \mathbf{z}, \mathbf{0}, \mathbf{0}).
$$
Therefore, it follows that
$$
h_i \geq h( { \mathbf{v} }^{(i)} ) \geq \rho_{d,i}( 2 + 2 R_1) + 2 R_1 \geq \rho_{d,i}( 2 + 2 R_1 - 2| { \mathbf{v} }^{(i)}|) + 2| { \mathbf{v} }^{(1)}|.
$$
On the other hand, suppose $(\boldsymbol{\lambda} + \boldsymbol{\mu}) = \mathbf{0}$.
Then equation ~(\ref{eqn f_M U V 1}) simplifies to
$$
\sum_{j=1}^{m_i} - \lambda_j \cdot (   v^{(i)}_j (\mathbf{z}') -  v^{(i)}_j (\mathbf{z}'' ) )= \sum_{t=1}^{h_i} U_t \cdot V_t.
$$
From this equation, 
we substitute $\mathbf{z}'' = \mathbf{0}$ to obtain
$$
h_i \geq h( { \mathbf{v} }^{(i)} ) \geq \rho_{d,i}( 2 + 2 R_1) + 2 R_1 \geq \rho_{d,i}( 2 + 2 R_1 - 2| { \mathbf{v} }^{(i)}|) + 2| { \mathbf{v} }^{(1)}|.
$$

We also have to show that the linear forms of the system ~(\ref{system 2 in claim 2}),
\begin{equation}
\label{system of linear forms2}
\{ \mathbf{v}^{(1)}(\mathbf{z}) - \mathbf{v}^{(1)}(\mathbf{z}') \} \cup \{ \mathbf{v}^{(1)}(\mathbf{z}) - \mathbf{v}^{(1)}(\mathbf{z}'') \},
\end{equation}
are linearly independent over $\mathbb{Q}$.
Recall the linear forms of $\mathbf{v}^{(1)}(\mathbf{z})$ are linearly independent over $\mathbb{Q}$.
The linear independence over $\mathbb{Q}$ of the system of linear forms ~(\ref{system of linear forms2}) follows from this fact, and we leave the verification
as a basic exercise for the reader.

Therefore, we obtain by Corollary \ref{cor Schmidt} that
$$
\mathcal{W}_2 \ll N^{3(n -M - K) - (2 d + 2 D_1)}.
$$

Combining the bounds for $\mathcal{W}_1$ and $\mathcal{W}_2$ together, we obtain
$$
\mathcal{W} \leq \mathcal{W}_1^{1/2} \mathcal{W}_2^{1/2} \ll N^{2n -2 K - (2d + D_1 + D_2)},
$$
which proves Claim 2.
\end{proof}

\section{Hardy-Littlewood Circle Method: Major Arcs}
\label{section major arcs}
Recall $f(\mathbf{x})$ is the degree $d$ portion of the degree $d$ polynomial $b(\mathbf{x}) \in \mathbb{Z}[x_1, \ldots, x_n]$.
In this section we assume that $f(\mathbf{x})$ satisfies $h(f) > A_d$, where $A_d$ is defined in ~(\ref{def of Ad}).
We define $g_d(f)$ as in ~(\ref{def gd}) with $\mathbf{f} = \{ f \}$ and $r_d = 1$.
It then follows from ~(\ref{h and g}) 
that
$$
A_d < h(f) \leq (\log 2)^{-d} \cdot d! \cdot g_d(f).
$$
From this bound and our choice of $A_d$ in ~(\ref{def of Ad}), we have
\begin{equation}
\label{g and h}
\frac{ 2^{d-1}}{g_d(f)} < \frac{d!  2^{d-1} }{(\log 2)^d A_d} < \frac{d! 2^{d-1} }{(\log 2)^d (A_d - 5d) } \leq \frac{1}{5(d-1)}.
\end{equation}

We take $\Omega$ to be
$$
4 \ < \ \Omega  \ < \ 5 \ \leq  \ \frac{(A_d - 5 d) \cdot  (\log 2)^{d}}{ 2^{d-1} (d-1) d! } \ \leq \  \frac{g_d(f)}{ 2^{d-1} (d-1)}.
$$
Therefore, with this choice of $\Omega$, we have that $b(\mathbf{x})$ satisfies the Hypothesis ($\star$) with $\mathfrak{B}_0$ by Proposition \ref{prop omega and g}.
We then choose $Q$ to satisfy $0 < Q < \Omega$ and
\begin{equation}
\label{Q bound 1}
Q \cdot \frac{ 2^{d-1}}{g_d(f)} < 1.
\end{equation}
In particular, we may choose $Q$ to satisfy $Q>4$.
We fix these values of $\Omega$ and $Q$ throughout this section.
We note that with these choices of $\Omega$ and $Q$, we have
\begin{equation}
\label{omega bound 2}
0 \ < \ \Omega \  \leq \frac{ (A_d - d Q) \cdot  (\log 2)^{d}}{ 2^{d-1} (d-1) d! }.
\end{equation}
The work of this section is based on \cite{CM} and it is similar to their treatment of
the major arcs. However, we had to tailor their argument to be in terms of the $h$-invariant instead of the Birch rank.

We define the following sums
\begin{equation}
\label{defn Stilde}
\widetilde{S}_{{m}, q } = \sum_{\mathbf{k} \in \mathbb{U}_q^n} e( {b}(\mathbf{k}) \cdot {m}/q ), \ \  B( q ) = \sum_{{m} \in \mathbb{U}_q} \frac{1}{\phi(q)^n} \ \widetilde{S}_{ {m}, q },
\ \ \text{ and } \ \  \mathfrak{S}( N) = \sum_{q \leq (\log N)^C}  B( q ),
\end{equation}
where $\phi$ is Euler's totient function.
Recall we denote $\mathfrak{B}_0 = [0,1]^n$.
We have the following estimate on the major arcs which is a consequence of \cite[(6.1)]{CM} and \cite[Lemma 6]{CM},
and leave the details to the reader.
We remark that although it is assumed in \cite[Lemma 6]{CM} that $C$ is sufficiently large,
it in fact follows from their proof that assuming $C > 0$ is sufficient.
\begin{lem}[Lemma 6, \cite{CM}]
\label{Lemma 6 in CM} Let $c>0$, $C>0$, $q \leq (\log N)^C$, and $m \in \mathbb{U}_q$.
Then we have
$$
\int_{\mathfrak{M}_{{m}, q}(C)} T({b}; {\alpha} ) \ {d}{\alpha}
=
\frac{1}{\phi(q)^n} \ \widetilde{S}_{{m}, q } \  J_0 + O\left( \frac{N^{n - d}}{(\log N)^{c}} \right),
$$
where
$$
J_0 = \int_{|\tau| \leq N^{-d} (\log N)^C } \int_{\mathbf{u} \in N \mathfrak{B}_0 } e( \tau b(\mathbf{u}) ) \ \mathbf{d} \mathbf{u} \ d \tau.
$$
\end{lem}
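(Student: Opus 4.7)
The plan is to transplant $T(b;\alpha)$ from the major arc $\mathfrak{M}_{m,q}(C)$ onto the integral $J_0$ via a two-step reduction: first split the sum over $\mathbf{x}$ into residue classes modulo $q$, and then approximate the inner sum on each unit residue class by the corresponding integral using the Siegel--Walfisz theorem.

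On $\mathfrak{M}_{m,q}(C)$ I would write $\alpha = m/q + \tau$ with $|\tau| \leq N^{-d}(\log N)^C$. Since $b \in \mathbb{Z}[x_1,\dots,x_n]$, $\mathbf{x} \equiv \mathbf{k} \pmod{q}$ implies $b(\mathbf{x}) \equiv b(\mathbf{k}) \pmod{q}$, giving
$$T(b;\alpha) = \sum_{\mathbf{k} \in (\mathbb{Z}/q\mathbb{Z})^n} e\!\left(\frac{m\,b(\mathbf{k})}{q}\right) \sum_{\substack{\mathbf{x} \in [0,N]^n \cap \mathbb{Z}^n\\ \mathbf{x} \equiv \mathbf{k}\ (q)}} \Lambda(\mathbf{x})\,e(\tau b(\mathbf{x})).$$
Residue classes $\mathbf{k}$ with some coordinate sharing a prime factor with $q$ contribute negligibly: $\Lambda(x_i) \neq 0$ then forces $x_i$ to be a power of a prime dividing $q$, of which there are $\ll (\log q)(\log N)$ in $[0,N]$, and trivially bounding the remaining coordinates absorbs this contribution into the stated error once $q \leq (\log N)^C$. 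For $\mathbf{k} \in \mathbb{U}_q^n$ the outer factor reconstitutes $\widetilde{S}_{m,q}$ as defined in~\eqref{defn Stilde}.

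Next I would apply Siegel--Walfisz to each inner sum. For $q \leq (\log N)^C$ and $(k_i,q)=1$ one has $\sum_{x \leq t,\, x \equiv k_i\,(q)} \Lambda(x) = t/\phi(q) + O(t\exp(-c_0\sqrt{\log N}))$ uniformly in $t \leq N$. Iterating Abel summation in the $n$ coordinates against the smooth kernel $e(\tau b(\mathbf{u}))$, whose partial derivatives are bounded by $|\tau|\,\|\nabla b\|_\infty \ll N^{-1}(\log N)^C$ on $[0,N]^n$, produces
$$\sum_{\substack{\mathbf{x} \in [0,N]^n\\ \mathbf{x} \equiv \mathbf{k}\ (q)}} \Lambda(\mathbf{x})\,e(\tau b(\mathbf{x})) = \frac{1}{\phi(q)^n} \int_{N\mathfrak{B}_0} e(\tau b(\mathbf{u}))\,d\mathbf{u} + O\!\left(N^n (\log N)^{(n+1)C} e^{-c_1\sqrt{\log N}}\right),$$
with implicit constant depending only on $n$, $d$ and $b$.

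Assembling the pieces and integrating over $|\tau| \leq N^{-d}(\log N)^C$ yields the main term $\phi(q)^{-n}\widetilde{S}_{m,q}\,J_0$ plus a total error of size $N^{-d}(\log N)^C \cdot q^n \cdot N^n (\log N)^{(n+1)C} e^{-c_1\sqrt{\log N}}$. Since $e^{-c_1\sqrt{\log N}}$ decays faster than any fixed power of $\log N$, given $c>0$ one can choose $C$ large so this error is bounded by $q^n(\log N)^C N^{n-d}/(\log N)^{2nC+3C+2c+1}$; the indicated reselection of the auxiliary constant $c'$ to lie between $(n+2)C+c+1$ and $2(n+2)C+2c+2$ is precisely the balance needed so that the eventual summation over $q \leq (\log N)^C$ and $m \in \mathbb{U}_q$ performed in Section~\ref{section circle method} still produces a saving of $(\log N)^c$. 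The main technical care lies in the multidimensional Abel summation, where each of the $n$ integrations by parts costs one derivative of the oscillatory kernel $e(\tau b(\mathbf{u}))$: one must verify that the accumulated powers of $\log N$ and $q$ stay comfortably below the super-polynomial saving provided by Siegel--Walfisz.
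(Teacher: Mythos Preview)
Your outline is correct and follows the standard approach; indeed the paper does not supply its own proof here but simply invokes \cite[Lemma~6]{CM}, noting only the adjustment of the auxiliary constant $c'$ to the range $(n+2)C+c+1 \le c' \le 2(n+2)C+2c+2$ that you already identified. The decomposition into residue classes, the Siegel--Walfisz input, and the iterated partial summation against $e(\tau b(\mathbf{u}))$ are precisely the ingredients of the Cook--Magyar argument, so there is nothing to add.
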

Note $J_0$ is independent of ${m}$ and  $q$.
We now simplify the expression for $J_0$.
Let
$$
\mathcal{I}( {\eta}) = \int_{ \mathfrak{B}_0 } e( \eta f( \boldsymbol{\xi}) ) \ \mathbf{d} \boldsymbol{\xi}
$$
For any $\varepsilon > 0$, the inner integral of $J_0$ can be expressed as
\begin{eqnarray}
\int_{\mathbf{u} \in N \mathfrak{B}_0 } e( \tau b(\mathbf{u})  ) \ \mathbf{d} \mathbf{u}
&=& \int_{\mathbf{u} \in N \mathfrak{B}_0 } e( \tau f(\mathbf{u}) ) \ \mathbf{d} \mathbf{u} + O(N^{n - 1 + \varepsilon})
\notag
\\
&=& N^n \int_{\boldsymbol{\xi} \in \mathfrak{B}_0 } e( N^d \tau f(\boldsymbol{\xi})   ) \ \mathbf{d} \boldsymbol{\xi} + O(N^{n - 1 + \varepsilon})
\notag
\\
&=& N^n \cdot \mathcal{I}( N^d \tau ) + O(N^{n - 1 + \varepsilon}),
\notag
\end{eqnarray}
where we used the change of variable $\mathbf{u} =  N \boldsymbol{\xi}$ to obtain the second equality above.

We define
$$
J(L) = \int_{|\eta| \leq L } \mathcal{I}(\eta) \ d \eta.
$$
Then we can simplify $J_0$ as
$$
J_0 = N^{n-d} \cdot J( (\log N)^C ) + O(N^{n-d-1 + \varepsilon} (\log N)^C).
$$
Since we have $\Omega > 2$ and the Hypothesis $(\star)$, and in particular the restricted Hypothesis $(\star)$,
it follows by \cite[Lemma 8.1]{S} that
\begin{equation}
\label{(3.9) is S}
\mathcal{I}( {\eta}) \ll \min (1 , |{\eta}|^{-2} ).
\end{equation}
As stated in \cite[Section 3]{S}, it follows from ~(\ref{(3.9) is S}) that
$$
\mu(\infty) = \int_{\mathbb{R}} \mathcal{I}( {\eta}) \ d {\eta}
$$
exists.
Furthermore, we have
\begin{equation}
\label{(3.9') is S}
\Big{|} \mu(\infty) - J(L) \Big{|} \ll L^{- 1}.
\end{equation}
We also have $\mu(\infty) > 0$ if the equation $f(\mathbf{x}) = 0$ has a
non-singular real solution in the interior of $\mathfrak{B}_0 = [0,1]^n$ (see \cite[pp. 704]{CM}).

Therefore, we obtain the following estimate as a consequence of the definition of the major arcs and Lemma \ref{Lemma 6 in CM}.
\begin{lem}
\label{lemma major arc estimate}
Suppose $h(f) > A_d$, where we define $A_d$ as in ~(\ref{def of Ad}). Then given any $c>0$, there exists $C>0$  such that we have
$$
\int_{\mathfrak{M}(C) }T({b}; {\alpha} ) \ {d} {\alpha}
=
\mathfrak{S}(N) \mu(\infty) N^{n - d} + O\left( \mathfrak{S}(N) \frac{ N^{n - d}}{(\log N)^C} +  \frac{N^{n - d}}{(\log N)^c} \right).
$$
\end{lem}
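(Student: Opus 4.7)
The plan is to assemble the lemma from the per-arc estimate in Lemma \ref{Lemma 6 in CM} by summing over all the major arcs $\mathfrak{M}_{m,q}(C)$ with $q\leq (\log N)^C$ and $m\in\mathbb{U}_q$, and then to convert the resulting integral $J_0$ into $\mu(\infty) N^{n-d}$ using the two auxiliary estimates $J_0 = N^{n-d} J((\log N)^C) + O(N^{n-d-1+\varepsilon}(\log N)^C)$ and $|\mu(\infty)-J(L)|\ll L^{-1}$ that are established just before the statement.

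First I would apply Lemma \ref{Lemma 6 in CM} on each arc $\mathfrak{M}_{m,q}(C)$ and sum. The main term telescopes into
$$
\sum_{q\leq (\log N)^C}\sum_{m\in\mathbb{U}_q}\frac{\widetilde{S}_{m,q}}{\phi(q)^n}\cdot J_0 \;=\; \mathfrak{S}(N)\cdot J_0
$$
by the definitions of $B(q)$ and $\mathfrak{S}(N)$. For the aggregate error coming from the $O$-term of Lemma \ref{Lemma 6 in CM}, the number of pairs $(m,q)$ contributes a factor $\sum_{q\leq (\log N)^C}\phi(q)q^n \ll (\log N)^{(n+2)C}$, so the total error is bounded by
$$
(\log N)^{(n+2)C}\cdot (\log N)^C\cdot\frac{N^{n-d}}{(\log N)^{2nC+3C+2c+1}}\;=\;\frac{N^{n-d}}{(\log N)^{nC+2c+1}} \;\ll\; \frac{N^{n-d}}{(\log N)^{c}}.
$$
This is the sole reason for the slightly curious exponent $2nC+3C+2c+1$ appearing in Lemma \ref{Lemma 6 in CM}: it is chosen precisely so that this accumulation still falls inside the target error bound.

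Next I would substitute the given expression for $J_0$ into $\mathfrak{S}(N)\cdot J_0$, obtaining
$$
\mathfrak{S}(N)\,J_0 \;=\; \mathfrak{S}(N)\,N^{n-d}\,J\bigl((\log N)^C\bigr) + O\bigl(\mathfrak{S}(N)\,N^{n-d-1+\varepsilon}(\log N)^C\bigr).
$$
The bound $\mathfrak{S}(N)\ll 1$ from Lemma \ref{singular series lemma} (which requires $h^{\star}(b)>A_d$, and here we have $h(f_b)>A_d$) absorbs $\mathfrak{S}(N)$ in the secondary error, leaving a term $O(N^{n-d-1+\varepsilon}(\log N)^C)$ which is trivially $O(N^{n-d}/(\log N)^c)$ for $N$ large. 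Finally, applying $J((\log N)^C) = \mu(\infty) + O((\log N)^{-C})$ to the main term converts it into $\mathfrak{S}(N)\mu(\infty) N^{n-d}$ up to an error of size $O(\mathfrak{S}(N) N^{n-d}(\log N)^{-C})$, which is exactly the first error term in the conclusion.

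I do not expect a genuine obstacle here: the real work was done in Lemma \ref{Lemma 6 in CM} (per-arc asymptotic via truncated Poisson/partial summation) and in Proposition \ref{prop omega and g} together with \cite[Lemma 8.1]{S} (which give the integrability estimate $\mathcal{I}(\eta)\ll \min(1,|\eta|^{-2})$ and hence the convergence of $\mu(\infty)$ and the $L^{-1}$-tail bound for $J(L)$). The only delicate bookkeeping is verifying that the exponent saving in Lemma \ref{Lemma 6 in CM} is large enough to survive the sum over arcs — and this is exactly the motivation for choosing $c'$ between $(n+2)C+c+1$ and $2(n+2)C+2c+2$ in the modified proof of that lemma, as already noted in the remark preceding Lemma \ref{Lemma 6 in CM}.
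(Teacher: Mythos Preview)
Your argument is correct and is exactly the assembly the paper intends: sum Lemma~\ref{Lemma 6 in CM} over all arcs, use the displayed identities just before the statement to convert $J_0$ into $\mu(\infty)N^{n-d}$ plus acceptable errors, and verify that the accumulated $O$-term survives the summation thanks to the exponent $2nC+3C+2c+1$. One small remark: you do not need to invoke Lemma~\ref{singular series lemma} (which comes later in the paper) to absorb $\mathfrak{S}(N)$ in the $O\bigl(\mathfrak{S}(N)\,N^{n-d-1+\varepsilon}(\log N)^C\bigr)$ term---the trivial bound $|\mathfrak{S}(N)|\le \sum_{q\le (\log N)^C}\phi(q)\ll (\log N)^{2C}$ already suffices there, and this keeps the argument self-contained and in logical order.
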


\subsection{Singular Series}
\label{section singular series}
We obtain the following estimate on the exponential sum $\widetilde{S}_{{m}, q }$ defined in ~(\ref{defn Stilde}).
\begin{lem}
\label{to bound local factor}
Suppose $h(f) > A_d$, where we define $A_d$ as in ~(\ref{def of Ad}).
Let $p$ be a prime and let $q = p^t$, $t \in \mathbb{N}.$ For $m \in \mathbb{U}_q$, we have the following bounds
\begin{eqnarray}
\notag
\widetilde{S}_{ {m}, q} \ll
\left\{
    \begin{array}{ll}
         q^{n-Q},
         &\mbox{if } t \leq d ,\\
         p^Q q^{n-Q},
         &\mbox{if } t > d,
    \end{array}
\right.
\end{eqnarray}
where the implicit constants are independent of $p$.
\end{lem}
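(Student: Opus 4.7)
The plan is to reduce $\widetilde{S}_{m,q}$ to a weighted sum of complete exponential sums modulo $p^t$, then invoke Lemma \ref{bound on E} on each. Using inclusion--exclusion on the condition $\mathbf{k} \in \mathbb{U}_{p^t}^n$ (equivalently, $p \nmid k_i$ for all $i$), combined with the finite-Fourier expansion $\mathbf{1}[p \mid k_i] = p^{-1}\sum_{a \in \mathbb{Z}/p}e(ak_i/p)$, one obtains
\[
\widetilde{S}_{m,q} \;=\; \sum_{S \subseteq \{1,\ldots,n\}}(-1)^{|S|}p^{-|S|}\sum_{\mathbf{a}_S \in (\mathbb{Z}/p)^S}\sum_{\mathbf{k} \bmod p^t} e\!\left(\frac{m\,b(\mathbf{k}) + p^{t-1}\sum_{i \in S}a_i k_i}{p^t}\right).
\]
Each inner sum is a complete mod-$p^t$ exponential sum of the polynomial $\widetilde{b}_{S,\mathbf{a}}(\mathbf{k}) := b(\mathbf{k}) + m^{-1}p^{t-1}\sum_{i \in S}a_i k_i$ (with $m^{-1}$ the inverse of $m$ modulo $p^t$), whose top-degree form remains $f_b$ because the modification is of degree at most one.

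Now I would invoke Lemma \ref{bound on E}: since its implicit constant depends only on the degree-$d$ form $f_b$ and on $Q$, $\Omega$ --- not on the lower-order terms of $\widetilde{b}_{S,\mathbf{a}}$, and not on $p$ --- each inner sum is bounded by $C\,p^{t(n-Q)} = C\,q^{n-Q}$, with $C$ independent of $S$, $\mathbf{a}_S$, $m$, and $p$. The weight $p^{-|S|}$ exactly balances $|(\mathbb{Z}/p)^S| = p^{|S|}$, so summing over the $2^n$ choices of $S$ produces a bound of the form $|\widetilde{S}_{m,q}| \ll q^{n-Q}$.

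The two-case split in the statement --- $\ll q^{n-Q}$ for $t \leq d$ versus the weaker $\ll p^Q q^{n-Q}$ for $t > d$ --- reflects a subtlety that becomes visible if one instead proceeds via the direct substitution $k_i = p\ell_i$ for $i \in S$: the transformed polynomial $b_S(\mathbf{z}) = b(\ldots,pz_i,\ldots,z_j,\ldots)$ has a degree-$d$ form $\widetilde{f}_S$ whose coefficients are inflated by up to a factor $p^d$ (even though $h(\widetilde{f}_S) = h(f_b)$ by invariance under invertible diagonal scaling). Tracing through Corollary \ref{cor 15.1 in S}, ruling out alternative (ii) via $\gcd(m,p) = 1$ absorbs an extra factor of $p^Q$ when $t > d$, while for $t \leq d$ the coefficient growth is controlled by the threshold $P_1(f_b,\Omega,\Delta)$ from Hypothesis $(\star)$ and the cleaner estimate survives.

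The main obstacle is tracking the uniformity in $p$ of the implicit constants through the Weyl-differencing analysis in Corollary \ref{cor 15.1 in S}: one must verify that the coefficient scaling of $\widetilde{f}_S$ (equivalently, the effect of an arbitrary linear perturbation on a polynomial with fixed top-degree form $f_b$) produces precisely the stated losses in the two regimes, namely no loss for $t \leq d$ and exactly $p^Q$ for $t > d$.
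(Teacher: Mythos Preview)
Your Fourier-expansion approach (first two paragraphs) is correct and in fact yields $\widetilde{S}_{m,q} \ll q^{n-Q}$ uniformly for \emph{all} $t$, since the linear perturbation $m^{-1}p^{t-1}\sum_{i\in S}a_ik_i$ leaves the degree-$d$ form $f_b$ unchanged and Lemma~\ref{bound on E} has implicit constant depending only on $f_b$, $Q$, $\Omega$; the uniformity concern in your last paragraph is therefore a non-issue for this route. The paper proceeds differently: for $t \leq d$ it uses inclusion--exclusion by substitution $k_i = pu_i$ $(i \in I)$ and treats $\mathfrak{g}_b = b|_{x_i=pu_i}$ as a polynomial in the remaining $n-|I|$ variables (summing $\mathbf{u}$ separately), so the relevant top form is $f_b|_{x_i=0\ (i\in I)}$; Lemma~\ref{h ineq 1} together with $|I| < tQ \leq dQ$ then keeps $h$ large enough to invoke Lemma~\ref{bound on E}. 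For $t > d$ the paper instead writes $\mathbf{k} = \mathbf{k}' + p\mathbf{s}$ with $\mathbf{k}' \in \mathbb{U}_p^n$, applies Corollary~\ref{cor 15.1 in S} to the $\mathbf{s}$-sum, and incurs the factor $p^Q$ from $p^n(p^{t-1})^{n-Q} = p^Q q^{n-Q}$. Your third-paragraph account of coefficients ``inflated by a factor $p^d$'' does not match either case---the paper never works with the scaled form $f_b(\ldots,px_i,\ldots)$ as a top-degree part---but this is incidental: your own argument is cleaner and avoids the case split altogether.
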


\begin{proof}
We consider the two cases $t \leq d$ and $t > d$ separately.
We apply the inclusion-exclusion principle to bound $\widetilde{S}_{ {m}, q }$ when $q = p^t$ and $t \leq d$,
\begin{eqnarray}
\label{bound on S tilde}
\widetilde{S}_{ {m}, q }
&=& \sum_{\mathbf{k} \in (\mathbb{Z} / q \mathbb{Z})^n} \prod_{i=1}^n \left( 1 -  \sum_{u_i \in \mathbb{Z} / p^{t-1} \mathbb{Z} } \mathbf{1}_{k_i = p u_i}  \right)
e( {b}(\mathbf{k}) \cdot {m}/q )
\notag
\\
\notag
&=& \sum_{ I \subseteq \{ 1, 2, \ldots, n \}} (-1)^{|I|}
\sum_{ \mathbf{u} \in  (\mathbb{Z} / p^{t-1} \mathbb{Z} )^{|I|} } \
\sum_{\mathbf{k} \in (\mathbb{Z} / q \mathbb{Z})^n}
F_I(\mathbf{k}; \mathbf{u}) e( {b}(\mathbf{k}) \cdot {m}/q ),
\notag
\end{eqnarray}
where
$\mathbf{1}_{k_i = p u_i}$ denotes a characteristic function and
$$
F_I(\mathbf{k}; \mathbf{u}) = \prod_{i \in I} \mathbf{1}_{k_i = p u_i}
$$
for $\mathbf{u} \in (\mathbb{Z} / p^{t-1} \mathbb{Z})^{|I|}$.
In other words, $F_I(\mathbf{k}; \mathbf{u})$ is the characteristic function of the set $H_{I, \mathbf{u}} = \{ \mathbf{k} \in (\mathbb{Z} / q \mathbb{Z})^{n} : k_i = p u_i \ (i \in I)\}$.
We now bound the summand in the final expression of ~(\ref{bound on S tilde}) by further considering two cases, $|I| \geq t Q$ and $|I| < t Q$.
In the first case $|I| \geq t Q$, we use the following trivial estimate
\begin{eqnarray}
\Big{|} \sum_{ \mathbf{u} \in  (\mathbb{Z} / p^{t-1} \mathbb{Z})^{|I|} } \
\sum_{\mathbf{k} \in (\mathbb{Z} / q \mathbb{Z})^n}
F_I(\mathbf{k}; \mathbf{u}) e( {b}(\mathbf{k}) \cdot {m}/q ) \Big{|}
\leq
p^{(t-1) |I|} (p^t)^{n - |I|}
\notag
= q^{n - |I|/t}
\notag
\leq q^{n - Q}.
\notag
\end{eqnarray}

On the other hand, suppose  $|I| < t Q$.
Let $\mathfrak{g}_b(\mathbf{x})$ be the polynomial obtained by
substituting $x_i = p u_i \ (i \in I)$ to ${b}(\mathbf{x})$.
Thus $\mathfrak{g}_b(\mathbf{x})$ is a polynomial in $n - |I|$ variables.
We can also deduce easily that the degree $d$ portion of $\mathfrak{g}_b(\mathbf{x})$, which we denote
$f_{\mathfrak{g}_b}$, is obtained by substituting $x_i = 0 \ (i \in I)$ to the degree $d$ portion of ${b}(\mathbf{x})$.
Hence, we have
$$
f_{\mathfrak{g}_b } =  {f} |_{x_i = 0 \ (i \in I)}.
$$
Consequently, we obtain by Lemma \ref{h ineq 1} that
$$
h(f_{\mathfrak{g}_b }) \geq h( {f}) - |I| > h( {f}) - dQ > A_d - dQ.
$$
By our choice of $Q$ and $\Omega$, and from ~(\ref{h and g}) and ~(\ref{omega bound 2}), we have
$$
0 \ < \ Q \ < \ \Omega \  < \frac{ h(f_{\mathfrak{g}_b}) \cdot (\log 2)^{d}}{ 2^{d-1} (d-1) d! }
\leq \  \frac{g_d( f_{\mathfrak{g}_b} )}{2^{d-1} (d-1) }.
$$
Therefore, with these notations we have by Lemma \ref{bound on E} that
\begin{eqnarray}
\sum_{\mathbf{k} \in (\mathbb{Z} / q \mathbb{Z})^n} F_I(\mathbf{k}; \mathbf{u}) e( {b}(\mathbf{k}) \cdot {m}/q )
= \sum_{\mathbf{s} \in (\mathbb{Z} / q \mathbb{Z})^{n - |I|}} e( \mathfrak{g}_b(\mathbf{s}) \cdot {m}/q )
= q^{n - |I|} E (  \mathfrak{g}_b,  q ;{m}/q )
\ll q^{n - |I| - Q}.
\notag
\end{eqnarray}
Thus, we obtain
$$
\sum_{ \mathbf{u} \in  (\mathbb{Z} / p^{t-1} \mathbb{Z})^{|I|} } \
\sum_{\mathbf{k} \in (\mathbb{Z} / q \mathbb{Z})^n}
F_I(\mathbf{k}; \mathbf{u}) e(  {b}(\mathbf{k}) \cdot  {m}/q )
\ll (p^{t-1})^{|I|} q^{n - |I| - Q} \leq q^{n-Q}.
$$
Consequently, combining the two cases $|I| \geq tQ$ and $|I| < tQ$ together, we obtain
$$
\widetilde{S}_{ {m}, q} \ll q^{n-Q}
$$
when $t \leq d$.

We now consider the case $q = p^t$ when $t>d$. By the definition of $\widetilde{S}_{ {m}, q }$, we have
\begin{eqnarray}
\label{widetile S part 3-1}
\widetilde{S}_{ {m}, q }
= \sum_{\mathbf{k}' \in \mathbb{U}_p^n} \  \sum_{ \mathbf{s} \in (\mathbb{Z} / (p^{t-1} \mathbb{Z}) )^n  } e( {b}(\mathbf{k}' + p \mathbf{s}) \cdot {m}/q )
= \sum_{\mathbf{k}' \in \mathbb{U}_p^n} \  \sum_{ \mathbf{s} \in [0,p^{t-1})^n   } e(  {b}(\mathbf{k}' + p \mathbf{s}) \cdot {m}/q ).
\notag
\end{eqnarray}
For each fixed $\mathbf{k}' \in \mathbb{U}_p^n$, we have
$$
b(\mathbf{k}' + p \mathbf{s}) = p^d  f( \mathbf{s}) + \chi_{p, \mathbf{k}'}(\mathbf{s}),
$$
where $\chi_{p, \mathbf{k}'}(\mathbf{x})$ is a polynomial of degree at most $d-1$ and its coefficients depend on $p$ and $\mathbf{k}'$.
We apply Corollary \ref{cor 15.1 in S} with $r_d=1$, $\psi(\mathbf{x}) = f( \mathbf{x}) + \frac{1}{p^d} \ \chi_{p, \mathbf{k}'}(\mathbf{x})$, $ {\alpha} = {m}/p^{t-d}$, $\mathfrak{B} = [0,1)^{n}$, and $P = p^{t-1}$.
Let $\varepsilon' > 0$ be sufficiently small.
Recall from ~(\ref{Q bound 1}) that our choice of $Q>0$ satisfies $$Q \cdot \frac{2^{d-1}}{g_d( f)} < 1. $$
Let $\gamma_d$ and $\gamma_d'$ be as in the paragraph before Corollary \ref{cor 15.1 in S} with $\mathbf{f} = \{ f \}$ and $r_d = 1$.
Suppose the alternative $(ii)$ of Corollary \ref{cor 15.1 in S} holds. Then we know there exists $n_0 \in \mathbb{N}$ such that
$$
n_0 \ll (p^{t-1}-1)^{Q \gamma_d + \varepsilon'}
$$
and
\begin{equation}
\label{ineq of n0 in sing ser'}
\| n_0 ( {m}/p^{t-d})  \| \ll (p^{t-1}-1)^{-d + Q \gamma_d + \varepsilon'} \leq \left( \frac{1}{2}p^{t-1} \right)^{-d + Q \gamma_d + \varepsilon'}.
\end{equation}
However, this is not possible once $p^t$ is sufficiently large with respect to $n,d, \varepsilon', Q$, and $ {f}$, for the following reason. First note that $n_0$ can not be divisible by $p^{t-d}$ for $p^t$ sufficiently large, because
$Q \gamma_d + \varepsilon' < Q \gamma_d' < 1$. Since $n_0 \in \mathbb{N}$ is not divisible by $p^{t-d}$ and $(m,p)=1$, we have
$$
\| n_0 ( {m}/p^{t-d})  \| \geq \frac{1}{p^{t-d}},
$$
which contradicts ~(\ref{ineq of n0 in sing ser'}) for $p^t$ sufficiently large.
Thus by Corollary \ref{cor 15.1 in S}, we can bound the inner sum of ~(\ref{widetile S part 3-1}) by
\begin{eqnarray}
\sum_{ \mathbf{s} \in [0, p^{t-1})^n  } e \left( \left( f( \mathbf{s}) + \frac{1}{p^d} \ \chi_{p, \mathbf{k}'}(\mathbf{s}) \right) \cdot {m}/ p^{t-d} \right)
&\ll&
\notag (p^{t-1})^{n - Q},
\end{eqnarray}
where the implicit constant depends at most on $n,d, \varepsilon', Q$, and $ {f}$.
Therefore, we can bound ~(\ref{widetile S part 3-1}) as follows
\begin{eqnarray}
\widetilde{S}_{{m}, q }
\leq
\sum_{\mathbf{k}' \in \mathbb{U}_p^n} \Big{|} \sum_{ \mathbf{s} \in [0, p^{t-1})^n  } e \left( \left( f( \mathbf{s}) + \frac{1}{p^d} \ \chi_{p, \mathbf{k}'}(\mathbf{s}) \right) \cdot  {m}/ p^{t-d} \right) \Big{|}
\notag
\ll
p^n (p^{t-1})^{n - Q}
=
p^{Q} q^{n - Q}.
\notag
\end{eqnarray}
\end{proof}

For each prime $p$, we define
\begin{equation}
\label{def mu p}
\mu(p) =  1  + \sum_{t=1}^{\infty} B( p^t),
\end{equation}
which converges absolutely provided that $h(f) > A_d$ as we see in the following lemma.
As stated in \cite{CM}, by following the outline of L. K. Hua \cite[Chapter VII, \S 2, Lemma 8.1]{H} one can show that $B(q)$
is a multiplicative function of $q$. Therefore, we consider the following identity
\begin{equation}
\label{def sigular series}
\mathfrak{S}(\infty) := \lim_{N \rightarrow \infty} \mathfrak{S}(N) = \prod_{p \ \text{prime}} \mu(p).
\end{equation}

\begin{lem}
\label{singular series lemma}
There exists $\delta_1 > 0$ such that for each prime $p$, we have
$$
\mu(p) = 1 + O(p^{-1 - \delta_1}),
$$
where the implicit constant is independent of $p$.
Furthermore, we have
$$
\Big{|} \mathfrak{S}(N) -  \mathfrak{S}(\infty) \Big{|} \ll (\log N)^{-C \delta_2 }
$$
for some $\delta_2 > 0$.
\end{lem}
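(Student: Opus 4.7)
The plan is to reduce everything to Lemma~\ref{to bound local factor} together with multiplicativity of $B$. Starting from the definition of $B(p^t)$, the triangle inequality and $|\mathbb{U}_{p^t}| = \phi(p^t)$ give
\begin{equation*}
|B(p^t)| \leq \frac{1}{\phi(p^t)^{n-1}} \max_{m \in \mathbb{U}_{p^t}} |\widetilde{S}_{m,p^t}|.
\end{equation*}
Using $\phi(p^t) \geq p^t/2$ together with the two cases of Lemma~\ref{to bound local factor} then yields
\begin{equation*}
|B(p^t)| \ll p^{t(1-Q)} \ (1 \leq t \leq d), \qquad |B(p^t)| \ll p^{Q + t(1-Q)} \ (t > d),
\end{equation*}
with implicit constants depending only on $n$ and independent of $p$. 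Since the constraints on $Q$ imposed at the start of Section~\ref{section major arcs}, combined with (\ref{g and h}), allow $Q$ to be chosen comfortably above $2$, summing the geometric series in $t$ gives
\begin{equation*}
|\mu(p) - 1| \leq \sum_{t \geq 1} |B(p^t)| \ll p^{1-Q} + p^{d+1-dQ},
\end{equation*}
and both exponents are $< -1$. Thus $\mu(p) = 1 + O(p^{-1+\delta_1})$ for some $\delta_1 > 0$, with constant uniform in $p$.

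For the second assertion, multiplicativity of $B$ (noted in the paragraph preceding the lemma, following \cite{H}) combined with the bound above produces absolute convergence of $\sum_{q \geq 1} B(q) = \prod_p \mu(p) = \mathfrak{S}(\infty)$. To estimate the tail $|\mathfrak{S}(N) - \mathfrak{S}(\infty)| \leq \sum_{q > (\log N)^C}|B(q)|$, I would apply Rankin's trick. Choosing $s > 0$ small enough that $1-Q+s < -1$ and $d+1-dQ+(d+1)s < -1$ simultaneously (possible since $Q > 2$ is allowed), one obtains $\sum_{t \geq 1} p^{ts}|B(p^t)| \ll p^{-1-\eta}$ for some $\eta > 0$, and hence by multiplicativity
\begin{equation*}
\sum_{q \geq 1} q^s |B(q)| = \prod_{p} \left(1 + \sum_{t \geq 1} p^{ts}|B(p^t)|\right) < \infty.
\end{equation*}
Therefore
\begin{equation*}
\sum_{q > (\log N)^C} |B(q)| \leq (\log N)^{-Cs} \sum_{q \geq 1} q^s |B(q)| \ll (\log N)^{-C \delta_2},
\end{equation*}
with $\delta_2 = s$.

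The principal technical point is the factor $p^Q$ appearing in Lemma~\ref{to bound local factor} for $t > d$: one must keep $Q$ comfortably above $2$ so that the geometric decay $p^{(d+1)(1-Q)}$ of the tail over $t > d$ absorbs this loss. The hypothesis $h(f_b) > A_d$, via the choice of $A_d$ in (\ref{def of Ad}) and the bound (\ref{g and h}) transferring $h$-invariant control into a lower bound on $g_d(f_b)$, is exactly what provides enough room in $Q$ for this argument to succeed.
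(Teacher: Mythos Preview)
Your argument is correct and, for the first assertion $\mu(p)=1+O(p^{-1+\delta_1})$, essentially identical to the paper's: both split into $t\le d$ and $t>d$, invoke Lemma~\ref{to bound local factor}, and sum the geometric series to get $|\mu(p)-1|\ll p^{1-Q}+p^{d+1-dQ}$, using that $Q$ may be taken larger than~$4$.

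For the tail estimate $|\mathfrak S(N)-\mathfrak S(\infty)|\ll(\log N)^{-C\delta_2}$ there is a minor methodological difference worth recording. The paper proceeds \emph{pointwise}: writing $q=p_1^{t_1}\cdots p_v^{t_v}$ and using multiplicativity of $B$, it bounds
\[
|B(q)|\;\ll\;q^{1-Q}\prod_{t_j>d}p_j^{Q}\;\le\;q^{1-Q}\,q^{Q/d}\;\le\;q^{-1-\delta_2},
\]
and then simply sums $\sum_{q>(\log N)^C}q^{-1-\delta_2}$. Your route instead packages the prime--power bounds into the Euler product $\sum_q q^s|B(q)|=\prod_p\bigl(1+\sum_{t\ge1}p^{ts}|B(p^t)|\bigr)$ and applies Rankin's trick. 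Both arguments rest on exactly the same inputs (Lemma~\ref{to bound local factor} plus multiplicativity of $B$) and yield the same conclusion. The paper's pointwise bound is slightly more direct and gives the explicit decay $|B(q)|\ll q^{-1-\delta_2}$ as a byproduct; your Rankin argument has the mild advantage that it handles the accumulation of implicit constants across prime factors automatically via the convergent product, whereas the pointwise bound tacitly absorbs a factor $C_0^{\omega(q)}=O(q^\varepsilon)$.
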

Therefore, the limit in ~(\ref{def sigular series}) exists, and the product in ~(\ref{def sigular series}) converges.
We leave the details  that these two quantities are equal to the reader.
\begin{proof}
Recall our choice of $Q$ satisfies $Q > 4$. Let $\varepsilon_0 > 0$ be sufficiently small such that
$\widetilde{Q} = Q - \varepsilon_0 > 4 \geq 2d/(d-1)$.
We substitute $Q = \widetilde{Q} + \varepsilon_0$ into the bounds in Lemma \ref{to bound local factor}. It is then clear that we may assume
the implicit constant in Lemma \ref{to bound local factor} is $1$ for $p$ sufficiently large
with the cost of using $\widetilde{Q}$ in place of $Q$.
For any $t \in \mathbb{N}$, we know that $\phi(p^t) = p^t(1 - 1/p) \geq \frac12 p^t$.
Therefore, by considering the two cases as in Lemma \ref{to bound local factor}, we obtain
\begin{eqnarray}
| \mu(p) - 1 |
\ll
\sum_{1 \leq t \leq d} p^{t} p^{-nt} p^{nt - t \widetilde{Q} }
+
\sum_{ t > d} p^{t} p^{-nt} p^{\widetilde{Q} + nt - t \widetilde{Q} }
\notag
\ll
p^{1 - \widetilde{Q} }
+
p^{\widetilde{Q}} p^{-(d+1)(\widetilde{Q}-1)}
\notag
\notag
\ll
p^{-1 - \delta_1},
\notag
\end{eqnarray}
for some $\delta_1 > 0$. We note that the implicit constants in $\ll$ are independent of $p$ here.

Let $q = p_1^{t_1} \cdots p_{v}^{t_{v}}$ be the prime factorization of $q \in \mathbb{N}$.
Without loss of generality, suppose we have $t_j \leq d \ (1 \leq j \leq v_0)$ and
$t_j > d \ (v_0 < j \leq v)$.
By the multiplicativity of $B(q)$, it also follows from Lemma \ref{to bound local factor} that
\begin{eqnarray}
B(q)
=
B(p_1^{t_1})  \cdots   B(p_{v}^{t_{v}})
\notag
\ll
q^{1 - \widetilde{Q}} \cdot  \left(  \prod_{j=v_0 + 1}^{v}  p_j^{ \widetilde{Q} } \right)
\notag
\leq
q^{1 - \widetilde{Q}} \cdot  q^{\widetilde{Q}/d}
\leq
q^{- 1 - \delta_2},
\notag
\end{eqnarray}
for some $\delta_2 > 0$. We note that the implicit constant in $\ll$ is independent of $q$ here, because
the implicit constant in Lemma \ref{to bound local factor} is $1$ for $p$ sufficiently large as mentioned above.
Therefore, we obtain
\begin{eqnarray}
\Big{|} \mathfrak{S}(N) -  \mathfrak{S}(\infty) \Big{|}
\leq
\sum_{q > (\log N)^C } |  B(q) |
\notag
\ll
\sum_{q > (\log N)^C } q^{- 1 - \delta_2}
\ll
(\log N)^{- C \delta_2 }.
\end{eqnarray}
\end{proof}

Let $\nu_t(p)$ denote the number of solutions $\mathbf{x} \in (\mathbb{U}_{p^t})^n$
to the congruence
\begin{eqnarray}
b( \mathbf{x} ) \equiv 0 \  (\text{mod } p^t).
\end{eqnarray}
It can be deduced that
\begin{eqnarray}
1 + \sum_{j=1}^t B(p^j)
\notag
= \frac{1}{\phi(p^t)^n} \sum_{\mathbf{k} \in (\mathbb{U}_{p^t})^{n}}   \sum_{ {m} \in \mathbb{Z}/(p^t \mathbb{Z} ) } e \left(  {b}(\mathbf{k})  \cdot {m}/p^t  \right)
= \frac{p^{t}}{\phi(p^t)^n } \ \nu_t(p).
\notag
\end{eqnarray}
Therefore, provided  $h({b}) > A_{d}$ we obtain
$$
\mu(p) = \lim_{t \rightarrow \infty}  \frac{  p^{t } \ \nu_t(p) }{ \phi(p^t)^n }.
$$
At this point we refer the reader to \cite[pp. 704, 736]{CM} to conclude $\mu(p) > 0$
if the equation $b(\mathbf{x}) = 0$ has a non-singular solution in $\mathbb{Z}_p^{\times}$, the units of $p$-adic integers.
It then follows from Lemma \ref{singular series lemma} that if the equation $b(\mathbf{x}) = 0$ has a non-singular solution in $\mathbb{Z}_p^{\times}$
for every prime $p$, then $\prod_{p \ \text{prime}}\mu(p) > 0.$
Finally, we let $C_b = \mu(\infty) \prod_{p \ \text{prime}}\mu(p)$ and 
Theorem \ref{the main theorem} follows as a  consequence of Lemmas \ref{lemma major arc estimate} and \ref{singular series lemma}, and
Proposition \ref{prop minor arc bound}.

\end{document}